\def\zhou#1 {\fbox {\footnote {\ }}\ \footnotetext { From Yue: {\color{red}#1}}}
\newtheorem{theorem}{Theorem}[section]
\newtheorem{remark}[theorem]{Remark}
\newtheorem{lemma}[theorem]{Lemma}
\newtheorem{example}{Example}
\newtheorem{corollary}[theorem]{Corollary}
\newtheorem{proposition}[theorem]{Proposition}
\newtheorem{problem}{Problem}
\newtheorem{conjecture}[problem]{Conjecture}
\newcommand{\Gal}{\mathrm{Gal}}
\newcommand{\Z}{\mathbb Z}
\newcommand{\Q}{\mathbb Q}
\newcommand{\F}{\mathbb F}
\newcommand{\K}{\mathbb K}
\newcommand{\Tr}{\mathrm{Tr}}
\newcommand{\RN}[1]{%
	\textup{\uppercase\expandafter{\romannumeral#1}}%
}
\begin{document}
	
\begin{frontmatter}
\title{On the nonexistence of lattice tilings of $\Z^n$ by Lee spheres}
\author[tao]{Tao Zhang}
\ead{taozhang@gzhu.edu.cn}
\author[yue]{Yue Zhou\corref{cor1}}
\ead{yue.zhou.ovgu@gmail.com}
\cortext[cor1]{Corresponding author}

\address[tao]{School of Mathematics and Information Science, Guangzhou University\\ Guangzhou 510006, China.}
\address[yue]{College of Liberal Arts and Sciences, National University of Defense Technology\\ Changsha 410073, China.}

\begin{abstract}
In 1968, Golomb and Welch conjectured that $\mathbb{Z}^{n}$ cannot be tiled by Lee spheres with a fixed radius $r\ge2$ for dimension $n\ge3$. This conjecture is equivalent to saying that there is no perfect Lee codes in $\mathbb{Z}^{n}$ with radius $r\ge2$ and dimension $n\ge3$. Besides its own interest in discrete geometry and coding theory, this conjecture is also strongly related to the degree-diameter problems of abelian Cayley graphs. Although there are many papers on this topic, the Golomb-Welch conjecture is still far from being solved. In this paper, we introduce some new algebraic approaches to investigate the nonexistence of lattice tilings of $\Z^n$ by Lee spheres, which is a special case of the Golomb-Welch conjecture. Using these new methods, we show the nonexistence of lattice tilings of $\Z^n$ by Lee spheres of the same radius $r=2$ or $3$ for infinitely many values of the dimension $n$. In particular, there does not exist lattice tilings of $\Z^n$ by Lee spheres of radius $2$ for all $3\le n\le 100$ except 8 cases.
\end{abstract}

\begin{keyword}
	Perfect Lee code \sep Lattice tiling \sep Golomb-Welch conjecture \sep  Degree-diameter problem
	
	\MSC[2010] 11H31 \sep 11H71 \sep 52C22
\end{keyword}
\end{frontmatter}
\section{Introduction}
50 years ago, Golomb and Welch \cite{GW68} proposed a conjecture on the existence of tilings of $\Z^n$ by Lee spheres of given radius. As summarized in a very recent survey \cite{HK18}, this conjecture is still far from being solved despite of many efforts and papers on it. Besides its own interest in discrete geometry, this conjecture is also strongly related to several different topics in mathematics. For instance, this conjecture is equivalent to the existence problem of perfect codes with respect to the Lee distance which is one of the central research topics in coding theory. It is also strongly related to the degree-diameter problems in graph theory.

First let us introduce several basic concepts and notations. Throughout this paper, let $\mathbb{Z}$ and $\mathbb{Z}_{m}$ be the ring of integers and the ring of integers modulo $m$, respectively. For any two words $x=(x_{1},\dots,x_{n})$ and $y=(y_{1},\dots,y_{n})$ in $\Z^n$ or $\Z_m^n$, their \emph{Lee distance} is defined by
\begin{align*}
&d_{L}(x,y)=\sum_{i=1}^{n}|x_{i}-y_{i}|\text{ for }x,y\in\mathbb{Z}^{n},\text{ and }\\
&d_{L}(x,y)=\sum_{i=1}^{n}\text{min}(|x_{i}-y_{i}|,m-|x_{i}-y_{i}|)\text{ for }x,y\in\mathbb{Z}_{m}^{n}.
\end{align*}
A \emph{Lee code} $C$ is just a subset of $\mathbb{Z}^{n}$ (or $\mathbb{Z}_{m}^{n}$) endowed by the Lee distance. If $C$ has further the structure of an additive group, then $C$ is called \emph{linear Lee code}. Lee codes have many practical applications, for example, constrained and partial-response channels \cite{RS94}, flash memory \cite{S12} and interleaving schemes \cite{BBV98}.

A Lee code $C$ is an \emph{$r$-error-correcting} code if any two distinct elements of $C$ have distance at least $2r+1$. An $r$-error-correcting Lee code is further called \emph{perfect} if for each $x\in\mathbb{Z}^{n}$ ($x\in\mathbb{Z}_{m}^{n}$), there exists a unique element $c\in C$ such that $d_{L}(x,c)\le r$. A perfect $r$-error-correcting Lee code in $\mathbb{Z}^{n}$ and $\mathbb{Z}_{m}^{n}$ will be denoted by $PL(n,r)$-code and $PL(n,r,m)$-code, respectively.

A geometric way of introducing a perfect Lee code is by means of a tiling. Let $V$ be a subset of $\mathbb{Z}^{n}$ (or $\mathbb{Z}_{m}^{n}$). By a copy of $V$ we mean a translation $V+x=\{v+x:\ v\in V\}$ of $V$, where $x\in\mathbb{Z}^{n}$ (or $x\in\mathbb{Z}_{m}^{n}$, respectively). A collection $\mathfrak{T}=\{V+l:\ l\in L\}$, $L\subseteq\mathbb{Z}^{n}$ (or $L\subseteq\mathbb{Z}_{m}^{n}$, respectively), of copies of $V$ constitutes a tiling of $\mathbb{Z}^{n}$ (or $\mathbb{Z}_{m}^{n}$, respectively) by $V$ if $\mathfrak{T}$ forms a partition of $\mathbb{Z}^{n}$ (or $\mathbb{Z}_{m}^{n}$, respectively). If $L$ further forms a lattice, then $\mathfrak{T}$ is called a \emph{lattice tiling} of $\mathbb{Z}^{n}$. Consider the Lee spheres
\begin{align*}
&S(n,r)=\{x\in\mathbb{Z}^{n}:d_{L}(x,0)=|x_{1}|+\dots+|x_{n}|\le r\},\\
&S(n,r,m)=\{x\in\mathbb{Z}_{m}^{n}:d_{L}(x,0)\le r\}.
\end{align*}
Then a code $C$ is a $PL(n,r)$-code (or $PL(n,r,m)$-code) if and only if $\{S(n,r)+c:c\in C\}$ (or $\{S(n,r,m)+c:c\in C\}$, respectively) constitutes a tiling of $\mathbb{Z}^{n}$ by $S(n,r)$ (or $\mathbb{Z}_{m}^{n}$ by $S(n,r,m)$, respectively). Moreover, $C$ is a linear $PL(n,r)$-code if and only if $\{S(n,r)+c:c\in C\}$ forms a lattice tiling of $\mathbb{Z}^{n}$.

For $m\ge2r+1$, as pointed out in \cite[Proposition 1]{HK18}, there exists a natural bijection between $PL(n,r,m)$-codes and $PL(n,r)$-codes that is a union of cosets of $m\mathbb{Z}^{n}\subset\mathbb{Z}^{n}$, given by taking the image or the inverse image with respect to the projection map $\mathbb{Z}^{n}\mapsto\mathbb{Z}_{m}^{n}$. Hence, in this case $PL(n,r)$-codes contain all information about $PL(n,r,m)$-codes.

In the following, we will restrict ourselves to $PL(n,r)$-codes. In \cite{GW68,GW70}, Golomb and Welch constructed $PL(1,r)$-codes, $PL(2,r)$-codes and $PL(n,1)$-codes explicitly. In the same paper, they also proposed the following conjecture.
\begin{conjecture}[Golomb-Welch conjecture]
For $n\ge3$ and $r\ge2$, there does not exist $PL(n,r)$-code.
\end{conjecture}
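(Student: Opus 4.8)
The plan is to recast the nonexistence of a $PL(n,r)$-code as the impossibility of a Fourier-analytic tiling identity and then to rule out that identity for all $n\ge 3$ and $r\ge 2$. First I would pass from $\Z^n$ to the finite group $G=\Z_m^n$ with $m\ge 2r+1$: by the bijection recorded above, a $PL(n,r)$-code that is a union of cosets of $m\Z^n$ corresponds to a $PL(n,r,m)$-code, and the aim is that \emph{every} $PL(n,r)$-code can ultimately be forced into this periodic form, so that it suffices to work in $G$. In $G$ a perfect code $C$ is precisely a solution of the convolution equation $\mathbf 1_{S}\ast\mathbf 1_{C}=\mathbf 1_{G}$ with $S=S(n,r,m)$. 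Taking characters $\chi\in\widehat G$ turns this into $\widehat{\mathbf 1_{S}}(\chi)\,\widehat{\mathbf 1_{C}}(\chi)=0$ for every nontrivial $\chi$, so at each such $\chi$ either the Lee-sphere sum $T(\chi)=\sum_{x\in S}\chi(x)$ vanishes or $\widehat{\mathbf 1_{C}}(\chi)$ does. The entire problem is thereby concentrated in the zero set $Z=\{\chi\neq 1:T(\chi)=0\}$ of the single function $T$.

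The second step is to compute $T$ explicitly and extract enough of its zero structure to force a contradiction. Writing $\chi=(\zeta^{a_1},\dots,\zeta^{a_n})$ with $\zeta=e^{2\pi i/m}$, the cross-polytope shape of $S$ gives $T(\chi)$ as the truncation to Lee weight $\le r$ of the rational generating function $\prod_{j=1}^{n}\frac{1-t^2}{(1-t\zeta^{a_j})(1-t\zeta^{-a_j})}$; equivalently $T$ is a fixed symmetric polynomial of degree $r$ in the one-dimensional Dirichlet data attached to each coordinate. I would then confront two constraints that any perfect code must meet at once: a global divisibility constraint, namely that $|S|=\sum_{k=0}^{\min(n,r)}2^k\binom nk\binom rk$ equals the index $[\Z^n:C]$ (and $|S|=\det L$ in the lattice case), and a local constraint forcing $Z$ to be large enough that $\widehat{\mathbf 1_{C}}$ can be supported off $Z$ while still integrating to the correct density. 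The strategy is to show these pull in opposite directions once $n\ge 3$: the degree-$r$ symmetric form of $T$ makes $Z$ rigid, its zeros lying on thin subvarieties cut out by cyclotomic relations among the $a_j$, whereas perfectness forces $\widehat{\mathbf 1_{C}}$ to vanish on so many characters that $C$ collapses to an object of the wrong size.

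For the \emph{lattice} case this program is tractable: the Poisson-summation picture reduces the local constraint to requiring $L^\ast\setminus\{0\}\subseteq Z$, and one attacks this by combining the cyclotomic factorization of $T$ with number-theoretic divisibility of $|S|$, exactly as developed in this paper for $r=2,3$. The genuine obstacle, and the reason the full Golomb--Welch conjecture remains open, is the \emph{non-lattice} case: without the group structure of $C$ there is no orthogonality to exploit, and the reduction to $G=\Z_m^n$ presupposes a periodicity that is itself unproven for an arbitrary tiling of $\Z^n$ by a fixed prototile. Thus the hardest step is a periodicity (``rationality of the tiling'') theorem asserting that every tiling of $\Z^n$ by $S(n,r)$ must be periodic, after which the finite Fourier analysis applies. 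Lacking such a theorem, I would fall back on Golomb and Welch's geometric surface-to-volume estimate, which already settles large $r$ for each fixed $n$ by counting the boundary lattice points of the cross-polytope that a perfect covering is forced to double-cover; pushing that quantitative estimate uniformly down to $r=2$ for all $n\ge 3$ is precisely where the argument currently stalls.
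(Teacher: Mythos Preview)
The statement you are attempting is the Golomb--Welch \emph{conjecture}; the paper does not prove it, and indeed emphasizes that it ``is still far from being solved.'' There is therefore no paper proof to compare against, and your proposal must be judged on whether it actually closes the problem. It does not, and you yourself say so.

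You identify two genuine gaps and then leave both open. First, the passage from an arbitrary $PL(n,r)$-code to a periodic one (so that the finite Fourier machinery on $\Z_m^n$ applies) is not justified: you write that this ``presupposes a periodicity that is itself unproven,'' and that is exactly right. No such periodicity theorem for tilings of $\Z^n$ by $S(n,r)$ is known for $n\ge 3$, so the entire character-theoretic reduction is conditional. Second, even granting periodicity, your plan to derive a contradiction from the zero set $Z$ of the sphere transform $T$ is only a heuristic: you assert that the ``degree-$r$ symmetric form of $T$ makes $Z$ rigid'' and that this is incompatible with the support condition on $\widehat{\mathbf 1_C}$, but you give no mechanism that actually forces a contradiction for all $n\ge 3$ and $r\ge 2$. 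The paper's own results for $r=2,3$ in the lattice case already require delicate case analysis, computer calculation, and number-theoretic input, and still leave infinitely many $n$ open; nothing in your outline indicates how to bypass this. Your fallback to the Golomb--Welch surface-to-volume bound is likewise incomplete, as you concede: that estimate handles only $r\ge r_n$ for unspecified $r_n$, and ``pushing that quantitative estimate uniformly down to $r=2$'' is precisely the open problem.

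In short, what you have written is an accurate survey of why the conjecture is hard, not a proof of it.
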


Although there are many papers related to this topic, this conjecture is far from being solved. We refer to \cite{HK18} for a recent survey on it.

In \cite{GW70}, Golomb and Welch proved the nonexistence of $PL(n,r)$-codes for given $n$ and $r\geq r_{n}$, where $r_{n}$ has not been specified. Later, Post \cite{P75} proved that there is no linear $PL(n,r)$-code for $r\geq\frac{\sqrt{2}}{2}n-\frac{1}{4}(3\sqrt{2}-2)$ and $n\geq6$. In \cite{L81}, Lepist{\"o} showed that a perfect Lee code must satisfy $n\geq(r+2)^{2}/2.1$, where $r\geq285$.

Other researchers have considered the conjecture for small dimensions. In \cite{GMP98}, Gravier et al.\ settled the Golomb-Welch conjecture for 3-dimensional Lee space. Dimension 4 was studied by {\v{S}}pacapan in \cite{S07} with the aid of computer. It was proved in \cite{H09E} that there are no perfect Lee codes for $3\leq n\leq5$ and $r>1$. Horak \cite{H09} showed the nonexistence of perfect Lee codes for $n=6$ and $r=2$.

A special case of the Golomb-Welch conjecture, the nonexistence of linear $PL(n,r)$-codes has also been considered. In \cite{HA12}, the authors established the following connection between lattice tilings of $\mathbb{Z}^{n}$ and finite abelian groups.
\begin{theorem}\cite{HA12}\label{thm1}
Let $S\subseteq\mathbb{Z}^{n}$ such that $|S|=m$. There is a lattice tiling of $\mathbb{Z}^{n}$ by translates of $S$ if and only if there are both an abelian group $G$ of order $m$ and a homomorphism $\phi:\mathbb{Z}^{n}\mapsto G$ such that the restriction of $\phi$ to $S$ is a bijection.
\end{theorem}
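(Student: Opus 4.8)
The plan is to prove both implications through the quotient group attached to a full-rank sublattice of $\mathbb{Z}^{n}$ and, in the reverse direction, through the kernel of the given homomorphism. The unifying observation is that, for a subgroup $L\le\mathbb{Z}^{n}$, the family $\{S+l:l\in L\}$ is a tiling of $\mathbb{Z}^{n}$ exactly when the addition map $S\times L\to\mathbb{Z}^{n}$, $(s,l)\mapsto s+l$, is a bijection, and this bijectivity can be read off the natural projection $\pi\colon\mathbb{Z}^{n}\to\mathbb{Z}^{n}/L$.

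For the ``only if'' direction, assume $\{S+l:l\in L\}$ is a lattice tiling with $L$ a subgroup of $\mathbb{Z}^{n}$. First I would observe that $L$ has full rank $n$: since $S$ is finite, $S+L$ is a finite union of translates of $L$, and this cannot exhaust $\mathbb{Z}^{n}$ unless $L$ has rank $n$ (a density or box-counting argument). Hence $G:=\mathbb{Z}^{n}/L$ is a finite abelian group of order $[\mathbb{Z}^{n}:L]$, and I take $\phi:=\pi$. Surjectivity of $\phi|_{S}$ onto $G$ follows because each coset $x+L$ contains, by the tiling property, a unique point of the form $s+l$ with $s\in S$ and $l\in L$, whence $\phi(s)=x+L$. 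Injectivity of $\phi|_{S}$ follows because $\phi(s_{1})=\phi(s_{2})$ means $s_{1}-s_{2}=l\in L$, so $s_{1}=s_{2}+l=s_{1}+0$ are two decompositions of the same point, and uniqueness forces $s_{1}=s_{2}$. Therefore $\phi|_{S}$ is a bijection, and in particular $|G|=|S|=m$.

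For the ``if'' direction, given an abelian group $G$ of order $m$ and a homomorphism $\phi\colon\mathbb{Z}^{n}\to G$ with $\phi|_{S}$ a bijection, I would set $L:=\ker\phi$. Since $\phi|_{S}$ is already onto $G$, the map $\phi$ is surjective, so $\mathbb{Z}^{n}/L\cong G$ has order $m$; thus $L$ has finite index $m$ in $\mathbb{Z}^{n}$, hence full rank, so it is indeed a lattice. It then remains to verify the tiling property: given $x\in\mathbb{Z}^{n}$, pick the unique $s\in S$ with $\phi(s)=\phi(x)$, so $x-s\in L$ and $x\in S+(x-s)$, giving the covering; and if $s_{1}+l_{1}=s_{2}+l_{2}$ with $s_{i}\in S$, $l_{i}\in L$, then applying $\phi$ yields $\phi(s_{1})=\phi(s_{2})$, hence $s_{1}=s_{2}$ and then $l_{1}=l_{2}$, giving disjointness.

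I do not expect a genuine obstacle. The two points deserving care are the bookkeeping that the hypothesis $|S|=m=|G|$ upgrades ``$\phi|_{S}$ injective or surjective'' to ``$\phi|_{S}$ bijective,'' and the remark that the subgroup $L$ in play automatically has finite index (equivalently full rank), which is exactly what makes $\mathbb{Z}^{n}/L$ a finite group of the correct order; both are handled by a short counting argument exploiting the finiteness of $S$.
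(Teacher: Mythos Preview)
Your argument is correct and is the standard one: pass to the quotient $\mathbb{Z}^{n}/L$ in one direction and take $L=\ker\phi$ in the other, using finiteness of $S$ to force $L$ to have full rank. Note, however, that the paper does not give its own proof of this theorem; it is simply quoted from \cite{HA12}, so there is nothing in the present paper to compare your proof against.
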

If the size of $S$ is a prime, then we have a much stronger result.
\begin{theorem}\cite{S98}\label{thm1_prime}
	Let $S\subseteq\mathbb{Z}^{n}$ such that $|S|=p$ is a prime. There is a tiling of $\mathbb{Z}^{n}$ by translates of $S$ if and only if there exits a homomorphism $\phi:\mathbb{Z}^{n}\mapsto C_p$ that restricts to a bijection from $S$ to the cyclic group $C_p$.
\end{theorem}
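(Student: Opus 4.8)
\medskip
\noindent\emph{Proof plan.} I would first observe that \emph{both} sides of the equivalence are in turn equivalent to the existence of a \emph{lattice} tiling of $\Z^n$ by $S$. Indeed, if $\phi:\Z^n\to C_p$ restricts to a bijection on $S$, then $\Lambda:=\ker\phi$ is a sublattice of index $p$ and $S$ is a complete set of coset representatives for $\Lambda$, so $\{S+\lambda:\lambda\in\Lambda\}$ is a lattice tiling; conversely, by Theorem~\ref{thm1} a lattice tiling of $\Z^n$ by $S$ yields an abelian group $G$ with $|G|=|S|=p$ and a homomorphism $\psi:\Z^n\to G$ restricting to a bijection on $S$, and since $p$ is prime the only possibility is $G\cong C_p$. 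So the task reduces to the substantive implication: if $S$ tiles $\Z^n$ and $|S|=p$ is prime, then $\Z^n$ admits a lattice tiling by $S$. (After a translation one assumes $0\in S$; one should also keep in mind that if $S-S$ does not generate $\Z^n$ the statement must be read inside the sublattice $\langle S-S\rangle$, which is all of $\Z^n$ when $S$ is a Lee sphere.)

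The plan for this implication is to pass from the infinite tiling $\Z^n=S\oplus L$ to a factorization of a finite abelian group. First I would show that, because $|S|$ is prime, the tiling may be taken periodic: there is a finite-index sublattice $M\subseteq\Z^n$ — which one can arrange so that $G:=\Z^n/M$ is a finite abelian $p$-group — for which the image $\overline S$ of $S$ in $G$ still has $p$ elements and $G=\overline S\oplus\overline L$ for a suitable $\overline L\subseteq G$. One would moreover like to take $G$ of exponent $p$, i.e.\ $G\cong\F_p^t$, which amounts to the claim that $S-S$ contains no nonzero element of $p\Z^n$. I expect this reduction — both the periodicity and the control of the exponent — to be the main obstacle: general tilings of $\Z^n$ in dimension $\ge3$ need not be periodic, so primality of $|S|$ must enter in an essential way, e.g.\ through a transfer/recurrence argument along coordinate directions or through the structure theory of factorizations.

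Granting the finite reduction, the homomorphism is produced by a character argument. For $\chi$ in the dual group $\widehat G$ put $\widehat{\mathbf 1_{\overline S}}(\chi)=\sum_{s\in\overline S}\chi(s)$ and similarly for $\overline L$; the factorization $G=\overline S\oplus\overline L$ is equivalent to the size identity $|\overline S|\,|\overline L|=|G|$ together with $\widehat{\mathbf 1_{\overline S}}(\chi)\,\widehat{\mathbf 1_{\overline L}}(\chi)=0$ for every nontrivial $\chi$. Since $\overline L\ne G$ (as $|\overline S|=p\ge2$), the indicator $\mathbf 1_{\overline L}$ is non-constant, so some nontrivial $\chi_0$ has $\widehat{\mathbf 1_{\overline L}}(\chi_0)\ne0$, hence $\widehat{\mathbf 1_{\overline S}}(\chi_0)=0$. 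Thus a sum of $p$ roots of unity — one of which is $1$, because $0\in\overline S$ — vanishes; for $p$ prime this forces (elementarily, or by the classification of vanishing sums of roots of unity) $\{\chi_0(s):s\in\overline S\}$ to be exactly the group of $p$-th roots of unity, so $\chi_0$ is injective on $\overline S$ and takes values there. If $G$ has exponent $p$ then $\chi_0$ has order dividing $p$, and identifying the $p$-th roots of unity with $C_p$ the composite $\Z^n\to G\xrightarrow{\ \chi_0\ }C_p$ is the required homomorphism. If one only reaches a $p$-group, an extra step replaces $\chi_0$ by a character of order exactly $p$ still injective on $\overline S$ (this is where one uses that $G$ is a $p$-group); alternatively, one may invoke R\'edei's theorem on factorizations of finite abelian groups with a factor of prime order to replace $\overline L$ directly by a subgroup of index $p$, which is precisely $\ker\phi$.
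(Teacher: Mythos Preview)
The paper does not prove this theorem; it is quoted from Szegedy~\cite{S98} and used as a black box (together with Theorem~\ref{thm1}) to obtain Corollary~\ref{coro1}. So there is no ``paper's own proof'' to compare against.

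As for your plan itself: the overall architecture is the right one, and your character argument for the finite step is cleaner than you give it credit for. Once you have a finite factorization $G=\overline S\oplus\overline L$ with $0\in\overline S$ and $|\overline S|=p$, the vanishing sum $\sum_{s\in\overline S}\chi_0(s)=0$ of $p$ roots of unity, one of which is $\chi_0(0)=1$, forces $\{\chi_0(s):s\in\overline S\}=\mu_p$ exactly. Now use the hypothesis you already flagged, that $S-S$ generates $\Z^n$: the composite $\psi:\Z^n\to G\xrightarrow{\chi_0}\mathbb{C}^\times$ satisfies $\psi(s-s')=\chi_0(s)\chi_0(s')^{-1}\in\mu_p$, hence $\psi(\Z^n)\subseteq\mu_p\cong C_p$, and $\psi$ is the desired homomorphism. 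You therefore do \emph{not} need to arrange $G$ to have exponent~$p$, nor to invoke R\'edei's theorem; the extra manoeuvres you sketch for the general $p$-group case are unnecessary. (Without the generating hypothesis the statement is genuinely false as written---e.g.\ $S=\{0,2\}\subset\Z$ tiles $\Z$ but admits no homomorphism $\Z\to C_2$ bijective on $S$---so your parenthetical caveat is essential, not cosmetic.)

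The real content, as you correctly diagnose, is the passage from an arbitrary tiling $\Z^n=S\oplus L$ to a finite factorization in which $\overline S$ still has $p$ elements. This is precisely the substance of Szegedy's paper, and it is not supplied by your outline: you say you ``expect this reduction\dots to be the main obstacle'' and that ``primality of $|S|$ must enter in an essential way,'' but you do not indicate any mechanism. Standard transfer or compactness arguments yield periodicity only in dimension~$1$ or under strong extra hypotheses; Szegedy's proof uses a specific combinatorial/algebraic argument exploiting primality to produce a periodic (indeed lattice) tiling. So your write-up is an accurate reduction of the theorem to its hard core, but it does not contain a proof of that core.
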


By Theorems \ref{thm1} and \ref{thm1_prime}, for $PL(n,r)$-codes, it is not difficult to get the following result.
\begin{corollary}\label{coro1}
There is a linear $PL(n,r)$-code if and only if there are both an abelian group $G$ and a homomorphism $\phi:\mathbb{Z}^{n}\mapsto G$ such that the restriction of $\phi$ to $S(n,r)$ is a bijection. Furthermore, if $|G|$ is a prime, then we do not need the assumption that a $PL(n,r)$-code is linear in the previous statement.
\end{corollary}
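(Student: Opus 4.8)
The plan is to read off Corollary~\ref{coro1} from Theorems~\ref{thm1} and~\ref{thm1_prime}, using the two dictionary statements recalled in the introduction: a code $C\subseteq\Z^n$ is a $PL(n,r)$-code exactly when $\{S(n,r)+c:c\in C\}$ is a tiling of $\Z^n$, and $C$ is a \emph{linear} $PL(n,r)$-code exactly when this tiling is a lattice tiling.

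For the first assertion, I would apply Theorem~\ref{thm1} with $S=S(n,r)$ and $m=|S(n,r)|$. A linear $PL(n,r)$-code exists if and only if $\Z^n$ has a lattice tiling by translates of $S(n,r)$, and Theorem~\ref{thm1} rewrites the latter as the existence of an abelian group $G$ of order $|S(n,r)|$ together with a homomorphism $\phi\colon\Z^n\to G$ whose restriction to $S(n,r)$ is a bijection onto $G$; the constraint $|G|=|S(n,r)|$ is forced by that bijection, so it need not be stated separately. Concretely, from such a $\phi$ one takes the sublattice $\ker\phi$ as the code --- since $\phi|_{S(n,r)}$ is a bijection, every coset of $\ker\phi$ meets $S(n,r)$ in precisely one point, so $\{S(n,r)+c:c\in\ker\phi\}$ partitions $\Z^n$ --- and conversely a lattice tiling with lattice $L$ gives $G=\Z^n/L$ with $\phi$ the quotient map.

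For the second assertion, I would assume $|G|$ is a prime $p$, so that $|S(n,r)|=p$ as well, and invoke Theorem~\ref{thm1_prime} with $S=S(n,r)$. Unlike Theorem~\ref{thm1}, this theorem does not presuppose that the tiling is a lattice tiling: the existence of any $PL(n,r)$-code --- equivalently, of any tiling of $\Z^n$ by translates of $S(n,r)$ --- is equivalent to the existence of a homomorphism $\phi\colon\Z^n\to C_p$ restricting to a bijection from $S(n,r)$ onto $C_p$. But as in the previous paragraph such a $\phi$ yields the lattice tiling with lattice $\ker\phi$, hence a linear $PL(n,r)$-code. Therefore, when $|S(n,r)|$ is prime, the existence of a $PL(n,r)$-code, the existence of a linear $PL(n,r)$-code, and the existence of the homomorphism are all equivalent, which is precisely the claim that the linearity hypothesis can be dropped.

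Both parts are thus essentially immediate, so there is no genuine obstacle; the only point requiring attention is to keep track of the distinction that Theorem~\ref{thm1} speaks of lattice tilings (hence of linear codes) whereas Theorem~\ref{thm1_prime} speaks of arbitrary tilings, which is exactly what makes the linearity assumption disappear in the prime case.
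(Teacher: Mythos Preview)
Your proposal is correct and matches the paper's approach exactly: the paper does not spell out a proof but simply says the corollary follows from Theorems~\ref{thm1} and~\ref{thm1_prime}, and your write-up is precisely the routine unpacking of those two citations. The only elaboration you add --- recovering the code as $\ker\phi$ and noting that the order condition $|G|=|S(n,r)|$ is automatic from the bijection --- is standard and does not depart from the intended argument.
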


\begin{example}
	For $n=2$, Golomb and Welch \cite{GW70} introduced a construction of perfect Lee codes of radius $r$. In particular, when $r=2$, the corresponding abelian group in Corollary \ref{coro1} is $G=C_{13}$ which is the cyclic group of order $13$. Note that each homomorphism $\phi:\mathbb{Z}^{n}\mapsto G$ is determined by the values of $\phi(e_{i})$ for $i=1,\cdots,n$, where $\{e_{i}: i=1,\cdots,n\}$  is the standard basis of $\mathbb{Z}^{n}$. Here we may take $\phi(e_1)=1$ and $\phi(e_2)=5$. If we look at the set $\{\pm x \pm y: x,y=0,1,5 \}$, we see every element of $G$ in it. In Figure \ref{fig:C_13}, we illustrate the perfect Lee code associated with $\{1,5\}\subseteq C_{13}$.
	\begin{figure}[h!]
		\centering
		\includegraphics[width=0.7\textwidth]{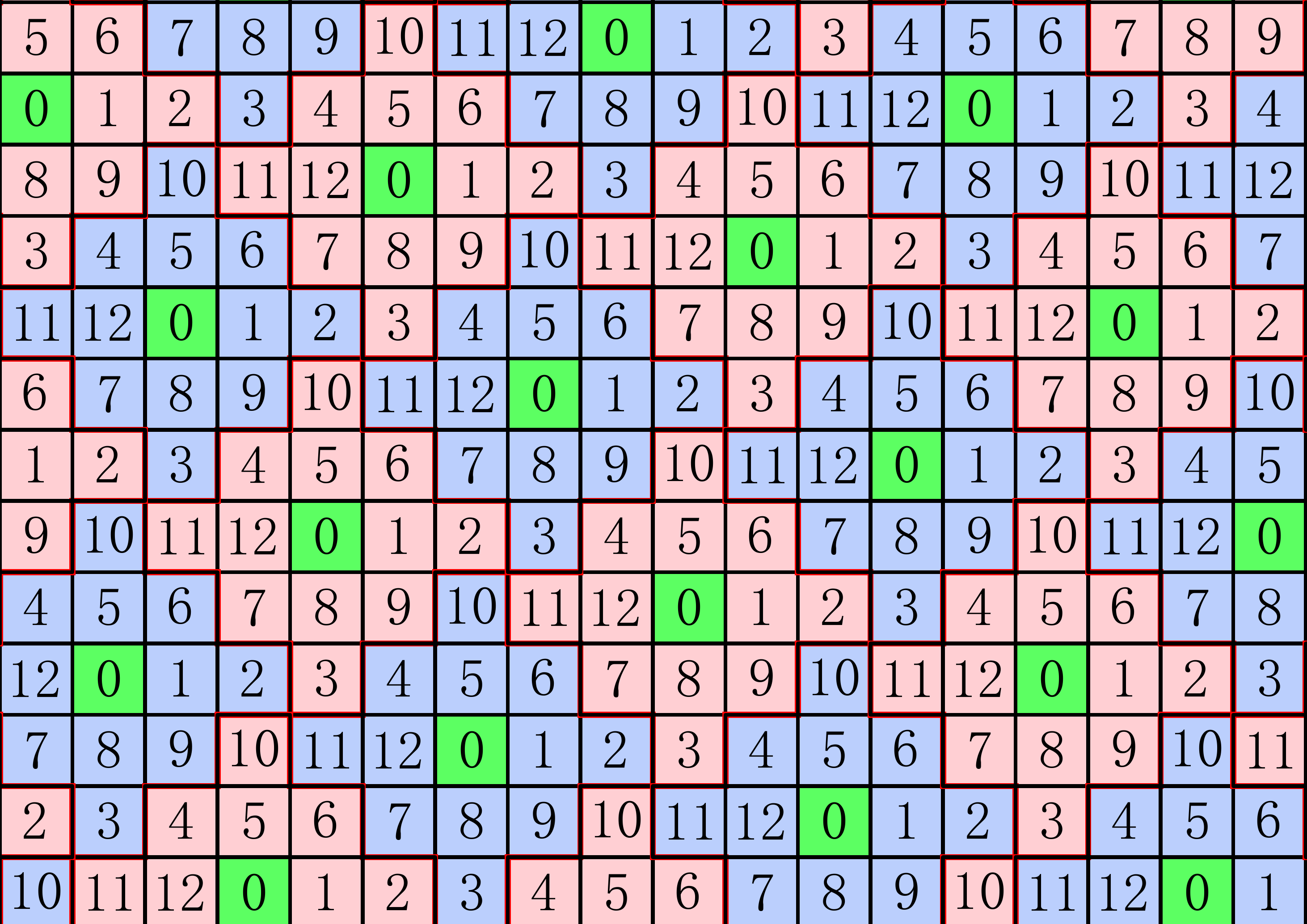}
		\caption{The $PL(2,2)$-code generated by $\{1,5\}\subseteq C_{13}$.}
		\label{fig:C_13}
	\end{figure}
\end{example}

Based on Corollary \ref{coro1}, some nonexistence results for linear $PL(n,r)$-codes have been given. In \cite{HG14}, Horak and Gr\'osek obtained the nonexistence of linear $PL(n,2)$-codes for $7\leq n\leq 12$. In a recent paper by the first author and Ge \cite{ZG17}, the nonexistence of linear perfect Lee codes of radii $3$ and $4$ are proved for several dimensions. By using a polynomial method, Kim \cite{K17} has achieved an important progress showing that there is no $PL(n,2)$-codes for a certain class of $n$ which is expected to be infinite, provided that $2n^2+2n+1$ is a prime satisfying certain conditions. Very recently, Qureshi, Campello and Costa \cite{QCC18} improved this result by considering the projection of the bijection $\phi$ in Corollary \ref{coro1} and some results of friendly primes, which shows that there is no linear $PL(n,2)$-codes for infinitely many $n$.

As pointed out in \cite{HK18}, it seems that the most difficult case of the Golomb-Welch conjecture is that of $r=2$. On one hand, the case $r=2$ is a threshold case as there is a $PL(n,1)$-code for all $n$. On the other hand, the proof of nonexistence of $PL(n,r)$-codes for $3\le n\le5$ and all $r\ge2$ is based on the nonexistence of $PL(n,2)$-codes for the given $n$ \cite{H09E}.

In this paper, our main results are the nonexistence of linear $PL(n,r)$-codes for $r=2,3$ and infinitely many $n$ obtained via the group ring approach. In Section~\ref{pre}, we first give a connection between linear perfect Lee codes and degree-diameter problem, then we provide a group ring representation of linear perfect Lee codes of radii 2 and 3. In Sections~\ref{radius2}, after presenting a generalization of the result obtained recently by Kim \cite{K17}, we turn to the group ring approach to derive several necessary conditions on the existence of linear $PL(n,2)$-codes. By them, we show that linear $PL(n,2)$-codes do not exist for infinitely many $n$. In particular, there is no linear $PL(n,2)$-codes in $\Z^n$ for all $3\le n\le 100$ except 8 cases; see Table \ref{table:100} in Appendix. In Section \ref{radius3}, we prove that linear $PL(n,3)$-codes do not exist for infinitely many values of $n$. Section \ref{sec:conclusion} concludes this paper.

\section{Preliminaries}\label{pre}
\subsection{Linear perfect Lee codes and degree-diameter problem}
 In a graph $\Gamma$, the \emph{distance} $d(u,v)$ from a vertex $u$ to another vertex $v$ is the length of a shortest $u$-$v$ path in $\Gamma$. The largest distance between two vertices in $\Gamma$ is the \emph{diameter} of $\Gamma$. Let $\Gamma=(V,E)$ be a graph of maximum degree $d$ and diameter $k$. According to the famous Moore bound, $\Gamma$ has at most $1+d+d(d-1)+\cdots+d(d-1)^{k-1}$ vertices. When the order of $V$ equals $1+d+d(d-1)+\cdots+d(d-1)^{k-1}$, the graph $\Gamma$ is called a \emph{Moore graph}. Except $k=1$ or $d\le2$, Moore graphs are only possible for $d=3,7,57$ and $k=2$ \cite{D73,HS60}. The graphs corresponding to the first two degrees are the Petersen graph and the Hoffman-Singleton graph. The existence of a Moore graph with degree 57 and diameter 2 is still open. As there are very few Moore graphs, it is interesting to ask the following so-called degree-diameter problem.
\begin{problem}
	Given positive integers $d$ and $k$, find the largest possible number $N(d,k)$ of vertices in a graph with maximum degree $d$ and diameter $k$.
\end{problem}

We refer to \cite{MS13} for a recent survey on the degree-diameter problem. Next, we look at a special type of graphs which is defined by a group.

Let $G$ be a multiplicative group with the identity element $e$ and $S\subseteq G$ such that $S^{-1}=S$ and $e\not\in S$. Here $S^{-1}=\{s^{-1}: s\in S\}$. The Cayley graph $\Gamma(G,S)$ has a vertex set $G$, and two distinct vertices $g,h$ are adjacent if and only if $g^{-1}h\in S$. Here $S$ is called the \emph{generating set}. In particular, when $G$ is abelian, we call $\Gamma(G,S)$ an \emph{abelian Cayley graph}. The following result is not difficult to prove.
\begin{proposition}\label{prop1}
	The diameter of a Cayley graph $\Gamma(G,S)$ is $k$ if and only if $k$ is the smallest integer such that all elements in $G$ appear in $\{\Pi_{i=1}^{k}s_{i}:s_{i}\in S\cup\{e\}\}$.
\end{proposition}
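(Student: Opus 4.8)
The plan is to reduce the whole statement to distances from the identity $e$, exploiting the vertex-transitivity of Cayley graphs, and then to translate walks in $\Gamma(G,S)$ into products of elements of $S\cup\{e\}$. First I would record that for every $g\in G$ the left-multiplication map $x\mapsto gx$ is an automorphism of $\Gamma(G,S)$, since $(gx)^{-1}(gy)=x^{-1}y$; hence $\Gamma(G,S)$ is vertex-transitive, $d(u,v)=d(e,u^{-1}v)$ for all $u,v$, and therefore the diameter of $\Gamma(G,S)$ equals $\max_{g\in G}d(e,g)$. So it suffices to control $d(e,g)$ for a fixed but arbitrary $g$. (The hypotheses $S=S^{-1}$ and $e\notin S$ are used only to ensure that $\Gamma(G,S)$ is a well-defined simple undirected graph, so that "distance" makes sense.)

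Next I would set up the dictionary between walks and products. Given a walk $e=g_{0},g_{1},\dots,g_{\ell}=g$ in $\Gamma(G,S)$, put $s_{i}=g_{i-1}^{-1}g_{i}$; then each $s_{i}\in S$ and $g=s_{1}s_{2}\cdots s_{\ell}$. Conversely, given $s_{1},\dots,s_{\ell}\in S$, the partial products $g_{j}=s_{1}\cdots s_{j}$ (with $g_{0}=e$) satisfy $g_{j-1}^{-1}g_{j}=s_{j}\in S$, so they form a walk from $e$ to $g=s_{1}\cdots s_{\ell}$. Allowing the factors to range over $S\cup\{e\}$ rather than $S$ exactly encodes the freedom to "stay put": a factor $s_{i}=e$ leaves the partial product unchanged, and deleting consecutive repetitions from such a sequence of partial products yields an honest walk of no greater length. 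Consequently, for every integer $k\ge 0$,
\[
d(e,g)\le k \iff g\in\Bigl\{\textstyle\prod_{i=1}^{k}s_{i}: s_{i}\in S\cup\{e\}\Bigr\},
\]
because a geodesic path realizing $d(e,g)=\ell\le k$ can be padded with $k-\ell$ trailing factors equal to $e$, and conversely any length-$k$ product of elements of $S\cup\{e\}$ produces a walk, hence a path, from $e$ to $g$ of length at most $k$.

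Finally I would assemble the statement. By vertex-transitivity, the diameter of $\Gamma(G,S)$ is at most $k$ if and only if $d(e,g)\le k$ for all $g\in G$, which by the displayed equivalence is the same as saying that every element of $G$ appears in $\{\prod_{i=1}^{k}s_{i}: s_{i}\in S\cup\{e\}\}$. Hence the diameter equals $k$ precisely when this set covers $G$ while the corresponding set for $k-1$ does not, i.e.\ when $k$ is the smallest integer with the stated covering property.

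I do not anticipate a real obstacle: the statement is elementary, and the only point needing a little care is the bookkeeping that lets one pass freely between "a product of at most $k$ elements of $S$" and "a product of exactly $k$ elements of $S\cup\{e\}$", and between arbitrary walks and geodesic paths — both handled by the padding and repetition-deletion argument indicated above.
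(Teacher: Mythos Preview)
Your proof is correct. The paper does not actually prove this proposition at all: it merely introduces it with the remark that ``the following result is not difficult to prove'' and then moves on. Your argument --- vertex-transitivity to reduce to distances from $e$, the dictionary between walks and products of generators, and the padding/deletion trick to pass between ``at most $k$ factors in $S$'' and ``exactly $k$ factors in $S\cup\{e\}$'' --- is precisely the standard elementary verification one would supply, and there is nothing to compare against.
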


By Corollary~\ref{coro1}, there is a linear $PL(n,r)$-code if and only if there are both an abelian group $G$ and a homomorphism $\phi:\mathbb{Z}^{n}\mapsto G$ such that the restriction of $\phi$ to $S(n,r)$ is a bijection. Note that each homomorphism $\phi:\mathbb{Z}^{n}\mapsto G$ is determined by the values of $\phi(e_{i}),\ i=1,\dots,n$, where $e_{i},\ i=1,\dots,n$, is the standard basis of $\mathbb{Z}^{n}$. Let $D:=\{\pm\phi(e_{i}): i=1,\dots,n\}$, then $\{\Pi_{i=1}^{r}d_{i}:d_{i}\in D\cup\{e\}\}=G$. Hence,
by Proposition~\ref{prop1}, there exists a linear $PL(n,r)$-code if and only if there exists an abelian Cayley graph with degree $2n$, diameter $r$ and $|S(n,r)|$ vertices. Note that $|S(n,r)|=\sum_{i=0}^{\text{min}\{n,r\}}2^{i}\binom {n}{i}\binom {r}{i}$ which is proved in \cite{stanton_note_1970}. This link was also pointed out by Camarero and Mart\'inez \cite{camarero_quasi-perfect_lee_2016}. Let $AC(\Delta,k)$ be the largest order of abelian Cayley graphs of degree $\Delta$ and diameter $k$. Then Golomb-Welch conjecture implies the following conjecture.
\begin{conjecture}
For $k\ge2$ and $d\ge3$, $AC(2d,k)<\sum_{i=0}^{\text{min}\{k,d\}}2^{i}\binom {k}{i}\binom {d}{i}.$
\end{conjecture}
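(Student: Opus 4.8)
The plan is to derive this conjecture from the Golomb--Welch conjecture together with a Moore-type upper bound adapted to abelian Cayley graphs, exactly along the lines already indicated in the paragraph preceding the statement. Concretely I would prove two things: (i) for all $d\ge 1$ and $k\ge 1$, $AC(2d,k)\le \sum_{i=0}^{\min\{k,d\}}2^{i}\binom{k}{i}\binom{d}{i}$, and this bound is attained only if there is a homomorphism $\phi\colon\Z^{d}\to G$ whose restriction to $S(d,k)$ is a bijection, where $G$ is the extremal group; and (ii) by Corollary~\ref{coro1} such a $\phi$ is equivalent to a linear $PL(d,k)$-code, which the Golomb--Welch conjecture forbids as soon as $d\ge 3$ and $k\ge 2$. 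Together (i) and (ii) force the inequality to be strict in that range.

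For (i), take an abelian Cayley graph $\Gamma(G,S)$ of degree at most $2d$ and diameter at most $k$. Since a Cayley graph is regular, $|S|\le 2d$, and because $|S(d',k)|\le|S(d,k)|$ for $d'\le d$ it is enough to treat $|S|=2d$. As $S=-S$ and $e\notin S$, one may pick $g_{1},\dots,g_{d}$ with $S\subseteq\{\pm g_{1},\dots,\pm g_{d}\}$; commutativity and Proposition~\ref{prop1} give $G=\{\prod_{i=1}^{k}t_{i}:t_{i}\in S\cup\{e\}\}$, and after cancelling matched factors $g_{i},-g_{i}$ every such product equals $\sum_{i=1}^{d}a_{i}g_{i}$ with $\sum_{i}|a_{i}|\le k$. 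Hence $G=\phi(S(d,k))$ for the homomorphism $\phi\colon\Z^{d}\to G$, $e_{i}\mapsto g_{i}$, so $|G|\le|S(d,k)|=\sum_{i=0}^{\min\{k,d\}}2^{i}\binom{k}{i}\binom{d}{i}$ by the count of $|S(n,r)|$ quoted in the excerpt; equality forces $\phi$ to be injective, hence bijective, on $S(d,k)$. The one place needing care is when some $g_{i}$ has order $2$ in $G$: then $\pm g_{i}$ collapses to $\{g_{i}\}$, the degree count drops, and the same argument gives an even smaller bound, so this case does not obstruct anything.

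Step (ii) is then immediate from Corollary~\ref{coro1}: a homomorphism $\phi\colon\Z^{d}\to G$ restricting to a bijection on $S(d,k)$ is precisely the data of a linear $PL(d,k)$-code, and no such code exists for $d\ge 3$, $k\ge 2$ under the Golomb--Welch conjecture; this contradiction turns the non-strict bound of (i) into the strict inequality claimed.

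The main obstacle is that this is only a \emph{conditional} argument: the strict inequality for all $d\ge 3$, $k\ge 2$ is essentially equivalent to (at least the linear case of) the open Golomb--Welch conjecture, so no unconditional proof is available in this generality. The achievable goal --- the one pursued in this paper --- is to establish the strict inequality unconditionally for infinitely many pairs $(d,k)$, in particular for $k=2$ and $k=3$: one rules out the extremal case directly by translating the bijection of Corollary~\ref{coro1} into an identity in the group ring $\Z[G]$ and extracting arithmetic constraints on $|G|$, on its prime divisors, and on the values of characters of $G$ that cannot all be satisfied, so that $AC(2d,k)$ is forced strictly below the Lee-sphere bound for those $d$.
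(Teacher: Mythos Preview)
Your proposal is correct and matches the paper's treatment: the statement is presented as a \emph{conjecture}, and the paper does not prove it but simply observes (in the paragraph immediately preceding it) that it is implied by the Golomb--Welch conjecture via the equivalence between linear $PL(d,k)$-codes and abelian Cayley graphs on $|S(d,k)|$ vertices of degree $2d$ and diameter $k$ --- precisely your steps (i) and (ii), and you correctly flag that the argument is only conditional. One small imprecision worth fixing: in the involution case your remark that ``the degree count drops'' is backwards (if $S$ contains order-$2$ elements you need \emph{more} than $d$ generators to cover $S$, so the naive bound gets worse, not better); this does not affect the equality analysis, however, since $|S(d,k)|=\sum_{i\ge 0}2^{i}\binom{k}{i}\binom{d}{i}$ is always odd and hence an extremal $G$ has no involutions.
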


The right-hand-side of the above inequality is also called the \emph{abelian Cayley Moore bound} for abelian groups with $d$-elements generating sets. This upper bound was also pointed by Dougherty and Faber in \cite{dougherty_degree-diameter_2004}, in which they investigated the upper and lower bounds of $AC(2d,k)$ by considering the associated lattice tilings of $\Z^n$ by Lee spheres. Beside giving a better upper bound on $AC(\Delta,k)$, it is also a challenging task to find better/exact lower bounds on $AC(\Delta,k)$ by constructing special Cayley graphs; for recent progress on this topic as well as the same problem for nonabelian Cayley graphs, we refer to \cite{abas_cayley_2016},~\cite{bachraty_asymptotically_2017},~\cite{macbeth_cayley_2012},~\cite{MS13},~\cite{pott_cayley_2017} and \cite{siagiova_approaching_2012}.

\subsection{Linear perfect Lee codes and group ring}
Let $G$ be a finite group (written multiplicatively). The group ring $\mathbb{Z}[G]$ is a free abelian group with a basis $\{g\mid g\in G\}$. For any set $A$ whose elements belong to $G$ ($A$ may be a multiset), we identify $A$ with the group ring element $\sum_{g\in G}a_gg$, where $a_g$ is the multiplicity of $g$ appearing in $A$. Given any $A=\sum_{g\in G} a_gg\in\mathbb{Z}[G]$, we define $A^{(t)}=\sum_{g\in G} a_gg^{t}$. Addition and multiplication in group rings are defined as:
$$\sum_{g\in G}a_{g}g+\sum_{g\in G}b_{g}g=\sum_{g\in G}(a_{g}+b_{g})g,$$
and
$$\sum_{g\in G}a_{g}g\sum_{g\in G}b_{g}g=\sum_{g\in G}(\sum_{h\in G}a_{h}b_{h^{-1}g})g.$$

For a finite abelian group $G$, denote its character group by $\widehat{G}$.
For any $A=\sum_{g\in G} a_g g$ and $\chi \in \widehat{G}$, define $\chi(A)=\sum_{g\in G} a_g \chi(g)$.
The following \emph{inversion formula} shows that $A$ is completely determined
by its character value $\chi(A)$, where $\chi$ ranges over $\widehat{G}$.

\begin{lemma}\label{inversion_formula}
	Let $G$ be an abelian group. If $A=\sum_{g\in G}a_g
	g\in \Z[G]$, then
	\begin{equation}\label{eq:inversion_formula}
		a_{h}=\frac{1}{|G|}\sum_{\chi\in\widehat G}\chi(A) \chi (h^{-1}),	
	\end{equation}
	for all $h\in G$.
\end{lemma}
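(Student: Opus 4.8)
The plan is to verify \eqref{eq:inversion_formula} by a direct computation: expand the characters occurring on the right-hand side, interchange the order of the (finite) summations, and reduce everything to the orthogonality relations for the characters of a finite abelian group. Since $A$ is determined by its coefficients $a_h$, it suffices to show that the right-hand side equals $a_h$ for every $h\in G$.

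Concretely, I would proceed as follows. Write $\chi(A)=\sum_{g\in G}a_g\chi(g)$ and substitute this into the right-hand side of \eqref{eq:inversion_formula}; swapping the two sums turns it into
$$\frac{1}{|G|}\sum_{\chi\in\widehat G}\chi(A)\chi(h^{-1})=\frac{1}{|G|}\sum_{g\in G}a_g\left(\sum_{\chi\in\widehat G}\chi(gh^{-1})\right).$$
The second ingredient is the orthogonality identity
$$\sum_{\chi\in\widehat G}\chi(x)=\begin{cases}|G|,&x=e,\\ 0,&x\ne e.\end{cases}$$
For $x\ne e$ this is proved by choosing a character $\psi$ with $\psi(x)\ne1$ — which exists because the characters of a finite abelian group separate its points — and observing that $\psi(x)\sum_{\chi}\chi(x)=\sum_{\chi}(\psi\chi)(x)=\sum_{\chi}\chi(x)$ after reindexing $\chi\mapsto\psi\chi$ over $\widehat G$, which forces the sum to vanish. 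Applying this identity with $x=gh^{-1}$ kills every term of the outer sum except the one with $g=h$, and using $|\widehat G|=|G|$ the right-hand side collapses to $\tfrac{1}{|G|}\,a_h\,|G|=a_h$, as required.

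I do not expect a genuine obstacle here: the only input beyond elementary manipulation of finite sums is the existence of a separating character for each $x\ne e$ (equivalently, the fact $|\widehat G|=|G|$), which is a standard consequence of the structure theorem for finite abelian groups. The statement is classical; the reason to record it is that the group ring and character computations carried out in Sections~\ref{radius2} and~\ref{radius3} rely on reconstructing group ring elements from their character values.
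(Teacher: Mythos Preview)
Your argument is correct and is exactly the standard proof via the orthogonality relations. Note that the paper itself does not give a proof of this lemma at all: it simply records the inversion formula as a known fact before using it in the later computations, so there is nothing to compare against beyond observing that your derivation supplies the routine verification the authors omitted.
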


Group rings and the associated characters are widely used in the research of difference sets and related topics. Most of the important nonexistence results of certain difference sets are obtained by using the group ring language; see \cite{beth_design_1999},~\cite{pott_finite_1995},~\cite{schmidt_cyclotomic_1999} and the references within.

Next, we translate the existence of linear $PL(n,r)$-codes for $r=2$ and $3$ into group ring equations.
\begin{lemma}\label{lemma1}
Let $n\ge2$, then there exists a linear $PL(n,2)$-code if and only if there exist a finite abelian group $G$ of order $2n^{2}+2n+1$ and $T\subseteq G$ viewed as an element in $\mathbb{Z}[G]$ satisfying
\begin{enumerate}
	\item $1\in T$,
	\item $T=T^{(-1)}$,
	\item $T^{2}= 2G-T^{(2)} +2n$.
\end{enumerate}
 \end{lemma}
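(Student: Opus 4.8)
The plan is to translate Corollary~\ref{coro1} into an identity in the group ring $\Z[G]$. Write $G$ multiplicatively and, for a homomorphism $\phi\colon\Z^n\to G$, put $x_i:=\phi(e_i)$. Using the explicit description $S(n,2)=\{0\}\cup\{\pm e_i\}_i\cup\{\pm 2e_i\}_i\cup\{\pm e_i\pm e_j\}_{i<j}$, and hence $|S(n,2)|=1+2n+2n+4\binom{n}{2}=2n^2+2n+1$, one computes in $\Z[G]$ that
\[
\sum_{s\in S(n,2)}\phi(s)=1+\sum_i\bigl(x_i+x_i^{-1}\bigr)+\sum_i\bigl(x_i^{2}+x_i^{-2}\bigr)+\Sigma,\qquad \Sigma:=\sum_{i<j}\bigl(x_ix_j+x_i^{-1}x_j^{-1}+x_ix_j^{-1}+x_jx_i^{-1}\bigr).
\]
Setting $T:=1+\sum_i(x_i+x_i^{-1})$ one has $T^{(2)}=1+\sum_i(x_i^{2}+x_i^{-2})$, so the sum above equals $T+T^{(2)}+\Sigma-1$. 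Since $|S(n,2)|=2n^2+2n+1$, the restriction $\phi|_{S(n,2)}$ is a bijection onto $G$ if and only if $|G|=2n^2+2n+1$ and $T+T^{(2)}+\Sigma-1=G$ in $\Z[G]$. I would also record the purely formal identity obtained by expanding $T^2=(1+A+B)^2$ with $A:=\sum_i x_i$ and $B:=\sum_i x_i^{-1}=A^{(-1)}$: using $A+B=T-1$, $A^2+B^2=(T^{(2)}-1)+2\sum_{i<j}(x_ix_j+x_i^{-1}x_j^{-1})$ and $AB=n+\sum_{i<j}(x_ix_j^{-1}+x_jx_i^{-1})$, one gets
\[
T^2=T^{(2)}+2\Sigma+2T+2n-2 .
\]

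For the forward direction, I would take $G$ and $\phi$ from a linear $PL(n,2)$-code via Corollary~\ref{coro1} and define $T$ as above. Injectivity of $\phi$ on $S(n,2)$, applied to the pairs $\{e_i,0\}$, $\{2e_i,0\}$ and $\{e_i,\pm e_j\}$ with $i\ne j$, shows $x_i\ne1$, $x_i^{2}\ne1$ and $x_i\notin\{x_j,x_j^{-1}\}$ for $i\ne j$; hence $1,x_1,\dots,x_n,x_1^{-1},\dots,x_n^{-1}$ are $2n+1$ distinct elements, so $T$ is a genuine subset of $G$ with $1\in T$ and $T=T^{(-1)}$, giving (1) and (2). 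Bijectivity of $\phi|_{S(n,2)}$ gives $\Sigma=G-T-T^{(2)}+1$; substituting this into the formal identity for $T^2$ collapses it to $T^2=2G-T^{(2)}+2n$, which is (3).

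For the converse, I would start from an abelian group $G$ of order $2n^2+2n+1$ — which is odd, so $G$ has no involutions — and $T$ satisfying (1)--(3). Evaluating (3) at the principal character gives $|T|^2=2|G|-|T|+2n$, i.e.\ $|T|(|T|+1)=(2n+1)(2n+2)$, whence $|T|=2n+1$. Since $1\in T$, $T=T^{(-1)}$ and $G$ has no involutions, $T\setminus\{1\}$ is a disjoint union of $n$ inverse pairs $\{x_i,x_i^{-1}\}$ with $x_i\ne x_i^{-1}$, so $T=1+\sum_i(x_i+x_i^{-1})$ with $x_1,\dots,x_n$ pairwise distinct, different from $1$, and $x_i\ne x_j^{-1}$ for $i\ne j$. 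Define $\phi\colon\Z^n\to G$ by $\phi(e_i)=x_i$. Now $T$ has the required shape, so the formal identity for $T^2$ applies, and comparing it with (3) yields $2\Sigma=2G-2T^{(2)}-2T+2$; dividing by $2$ in the torsion-free group $\Z[G]$ gives $T+T^{(2)}+\Sigma-1=G$, which by the first paragraph says $\phi|_{S(n,2)}$ is a bijection, and Corollary~\ref{coro1} then produces a linear $PL(n,2)$-code.

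I expect the main obstacle to be the structural part of the converse: one must extract from the three group-ring conditions alone that $|T|=2n+1$ (and that $T\setminus\{1\}$ really splits into inverse pairs), since only then can a homomorphism $\phi$ be defined at all; the principal-character count, together with the parity of $|G|$, is the tool for this. The expansion of $T^2$ is routine bookkeeping, but needs care so that every cross term — in particular the $n$ copies of the identity arising from $AB$ — is counted with the correct multiplicity.
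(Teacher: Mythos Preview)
Your proof is correct and follows essentially the same route as the paper: both invoke Corollary~\ref{coro1}, set $T=1+\sum_i(x_i+x_i^{-1})$, and expand $T^2$ to reach the group-ring identity. Your treatment of the converse is in fact more complete than the paper's, which simply writes ``combining the above equations, we can get our result'' and leaves the recovery of $|T|=2n+1$ and the splitting of $T\setminus\{1\}$ into inverse pairs implicit; your use of the principal character together with the oddness of $|G|=2n^2+2n+1$ to extract these facts is exactly what is needed to make that direction rigorous.
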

\begin{proof}
	By Corollary~\ref{coro1}, there exists a linear $PL(n,2)$-code if and only if there are both an abelian group $G$ (written multiplicatively) of order $2n^{2}+2n+1$ and a homomorphism $\phi:\mathbb{Z}^{n}\mapsto G$ such that the restriction of $\phi$ to $S(n,2)$ is a bijection. Note that each homomorphism $\phi:\mathbb{Z}^{n}\mapsto G$ is determined by the values of $\phi(e_{i})$ for $i=1,\cdots,n$, where $\{e_{i}: i=1,\cdots,n\}$ is the standard basis of $\mathbb{Z}^{n}$. Hence there exists a linear $PL(n,2)$-code if and only if there exists an $n$-subset $\{a_{1},a_{2},\dots,a_{n}\}\subseteq G$ such that
	\begin{align*}
		\{1\}\bigcup\{a_{i}^{\pm1},a_{i}^{\pm2}: i=1,\dots,n\}\bigcup\{a_{i}^{\pm1}a_{j}^{\pm1}:1\le i<j\le n\}=G.
	\end{align*}
	
	In the language of group ring, the above equation can be written as
	\begin{align*}
	G=1+\sum_{i=1}^{n}(a_{i}+a_{i}^{-1}+a_{i}^{2}+a_{i}^{-2})+\sum_{1\le i<j\le n}(a_{i}+a_{i}^{-1})(a_{j}+a_{j}^{-1}).
	\end{align*}
	Let $T=1+\sum_{i=1}^{n}(a_{i}+a_{i}^{-1})$. Then we can compute to get that
	\begin{align*}
	T^{2}&=(1+\sum_{i=1}^{n}(a_{i}+a_{i}^{-1}))^{2}\\
	     &=1+2\sum_{i=1}^{n}(a_{i}+a_{i}^{-1})+(\sum_{i=1}^{n}(a_{i}+a_{i}^{-1}))^{2}\\
	     &=1+2\sum_{i=1}^{n}(a_{i}+a_{i}^{-1})+\sum_{i=1}^{n}(a_{i}^{2}+a_{i}^{-2})+2\sum_{1\le i<j\le n}(a_{i}+a_{i}^{-1})(a_{j}+a_{j}^{-1})+2n.
	\end{align*}
	Combining the above equations, we can get our result.
\end{proof}
\begin{lemma}\label{lemma2}
Let $n\ge3$, then there exists a linear $PL(n,3)$-code if and only if there exist a finite abelian group $G$ of order $1+6n^{2}+\frac{4n(n-1)(n-2)}{3}$ and $T\subseteq G$ viewed as an element in $\mathbb{Z}[G]$ satisfying
\begin{enumerate}
	\item $1\in T$,
	\item $T=T^{(-1)}$,
	\item $T^{3}= 6G-3T^{(2)}T-2T^{(3)} +6nT$.
\end{enumerate}
\end{lemma}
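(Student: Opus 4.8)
The plan is to mirror the proof of Lemma~\ref{lemma1}: first express the bijectivity of $\phi|_{S(n,3)}$ as a single identity in $\Z[G]$, then reorganise that identity around the element $T$. By Corollary~\ref{coro1}, a linear $PL(n,3)$-code exists if and only if there are a finite abelian group $G$ (written multiplicatively) and a homomorphism $\phi\colon\Z^n\to G$ whose restriction to $S(n,3)$ is a bijection, and $\phi$ is determined by the values $a_i:=\phi(e_i)$ on the standard basis. Sorting the vectors of $S(n,3)$ first by the size of their support and then, when the support has size two, by which nonzero coordinate has absolute value $2$, this bijection condition becomes
\begin{align*}
G={}&1+\sum_{i=1}^{n}\bigl(a_i+a_i^{-1}+a_i^{2}+a_i^{-2}+a_i^{3}+a_i^{-3}\bigr)+\sum_{1\le i<j\le n}(a_i+a_i^{-1})(a_j+a_j^{-1})\\
&{}+\sum_{i\ne j}(a_i^{2}+a_i^{-2})(a_j+a_j^{-1})+\sum_{1\le i<j<k\le n}(a_i+a_i^{-1})(a_j+a_j^{-1})(a_k+a_k^{-1})
\end{align*}
in $\Z[G]$, the four sums accounting respectively for the vectors of support at most $1$, the support-$2$ vectors with both nonzero coordinates $\pm1$, the support-$2$ vectors having a coordinate $\pm2$, and the support-$3$ vectors; in particular $|G|$ must equal $|S(n,3)|=1+6n^{2}+\tfrac{4n(n-1)(n-2)}{3}$ by the count of \cite{stanton_note_1970}. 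Setting $T:=1+\sum_i(a_i+a_i^{-1})$, condition~(1) holds because $1=\phi(0)$, the set $T$ has exactly $2n+1$ elements because $0,\pm e_1,\dots,\pm e_n\in S(n,3)$ have pairwise distinct images, and condition~(2) is immediate.

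The crux is to recognise the right-hand side above as a polynomial in $T$, $T^{(2)}$ and $T^{(3)}$. Writing $A:=T-1=\sum_i(a_i+a_i^{-1})$ and using $(a_i+a_i^{-1})^{2}=a_i^{2}+2+a_i^{-2}$ together with $(a_i+a_i^{-1})^{3}=a_i^{3}+3a_i+3a_i^{-1}+a_i^{-3}$, I would evaluate $A^{2}$ and $A^{3}$ by grouping monomials according to how many of the summation indices coincide. This yields $\sum_{1\le i<j\le n}(a_i+a_i^{-1})(a_j+a_j^{-1})=\tfrac12\bigl(A^{2}-(T^{(2)}-1)-2n\bigr)$, then $\sum_{i\ne j}(a_i^{2}+a_i^{-2})(a_j+a_j^{-1})=(T^{(2)}-1)A-(T^{(3)}-1)-A$, and finally a closed form for $\sum_{1\le i<j<k\le n}(a_i+a_i^{-1})(a_j+a_j^{-1})(a_k+a_k^{-1})$ in terms of $A^{3}$, $(T^{(3)}-1)$, $(T^{(2)}-1)A$ and $A$, the rational coefficients $\tfrac12,\tfrac16$ and the linear-in-$n$ factors arising from the combinatorics of collapsing index tuples. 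Substituting these into the bijection identity and clearing the common denominator $6$ should, after routine simplification, collapse it to precisely $T^{3}=6G-3T^{(2)}T-2T^{(3)}+6nT$, which is condition~(3).

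For the converse, suppose $G$ has the stated order---which is odd, since $n(n-1)(n-2)\equiv0\pmod6$ makes the third summand of $|G|$ even---and let $T\subseteq G$ satisfy (1)--(3). Oddness of $|G|$ rules out elements of order $2$, so $T\setminus\{1\}$ is a disjoint union of inverse pairs, giving $T=1+\sum_{i=1}^{N}(a_i+a_i^{-1})$ with the $2N$ elements $a_i^{\pm1}$ pairwise distinct and different from $1$; applying the trivial character to (3) shows $|T|=2N+1$ is a root of $x^{3}+3x^{2}+(2-6n)x-6|G|=0$, which factors as $\bigl(x-(2n+1)\bigr)\bigl(x^{2}+(2n+4)x+(4n^{2}+4n+6)\bigr)$, and the quadratic factor has negative discriminant, so $N=n$. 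Running the chain of identities of the previous paragraph in reverse then turns~(3) back into the bijection identity, hence produces a linear $PL(n,3)$-code. The only genuine difficulty is the bookkeeping in the cubic expansion---handling the off-diagonal sums $\sum_{i\ne j}(a_i^{2}+a_i^{-2})(a_j+a_j^{-1})$ and $\sum_{i<j<k}(a_i+a_i^{-1})(a_j+a_j^{-1})(a_k+a_k^{-1})$ and tracking the numerical coefficients $6,3,2,6n$ that surface in condition~(3); everything else is formal.
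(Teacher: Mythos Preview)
Your proposal is correct and follows essentially the same approach as the paper: both start from Corollary~\ref{coro1}, write the bijection condition for $S(n,3)$ as a group-ring identity, set $T=1+\sum_i(a_i+a_i^{-1})$, and reduce everything to the stated relation $T^{3}=6G-3T^{(2)}T-2T^{(3)}+6nT$ by direct expansion; the paper expands $T^{3}$ and $T^{(2)}T$ and matches terms, whereas you invert the elementary symmetric sums in the $(a_i+a_i^{-1})$ via $A=T-1$, but this is the same computation organised differently. One point worth noting is that you are actually more careful than the paper on the converse direction: the paper simply writes ``combining the above equations, we can get our result,'' while you supply the parity argument that $|G|$ is odd so $T\setminus\{1\}$ splits into inverse pairs, and the trivial-character cubic showing $|T|=2n+1$, both of which are needed to recover the $a_i$'s from an abstract $T$ satisfying (1)--(3).
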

\begin{proof}
	By Corollary~\ref{coro1}, there exists a linear $PL(n,3)$-code if and only if there are both an abelian group $G$ (written multiplicatively) of order $1+6n^{2}+\frac{4n(n-1)(n-2)}{3}$ and a homomorphism $\phi:\mathbb{Z}^{n}\mapsto G$ such that the restriction of $\phi$ to $S(n,3)$ is a bijection. Note that each homomorphism $\phi:\mathbb{Z}^{n}\mapsto G$ is determined by the values of $\phi(e_{i})$ for $i=1,\cdots,n$, where $\{e_{i}: i=1,\cdots,n\}$ is the standard basis of $\mathbb{Z}^{n}$. Hence there exists a linear $PL(n,3)$-code if and only if there exists an $n$ tuple $(a_{1},a_{2},\dots,a_{n})$ of elements in $G$ such that
	\begin{align*}
	G=&\{1\}\cup\{a_{i}^{\pm1},a_{i}^{\pm2},a_{i}^{\pm3}: i=1,\dots,n\}\cup\{a_{i}^{\pm1}a_{j}^{\pm1}:1\le i<j\le n\}\\
	&\cup\{a_{i}^{\pm2}a_{j}^{\pm1}:1\le i\ne j\le n\}\cup\{a_{i}^{\pm1}a_{j}^{\pm1}a_{k}^{\pm1}:1\le i<j<k\le n\}.
	\end{align*}
	
	In the language of group ring, the above equation can be written as
	\begin{align*}
	G=&1+\sum_{i=1}^{n}(a_{i}+a_{i}^{-1}+a_{i}^{2}+a_{i}^{-2}+a_{i}^{3}+a_{i}^{-3})+\sum_{1\le i<j\le n}(a_{i}+a_{i}^{-1})(a_{j}+a_{j}^{-1})\\
	&+\sum_{1\le i\ne j\le n}(a_{i}^2+a_{i}^{-2})(a_{j}+a_{j}^{-1})+\sum_{1\le i<j<k\le n}(a_{i}+a_{i}^{-1})(a_{j}+a_{j}^{-1})(a_{k}+a_{k}^{-1}).
	\end{align*}
	Let $T=1+\sum_{i=1}^{n}(a_{i}+a_{i}^{-1})$. Then we can compute to get that
	\begin{align*}
	T^{(2)}T=&(1+\sum_{i=1}^{n}(a_{i}^2+a_{i}^{-2}))(1+\sum_{i=1}^{n}(a_{i}+a_{i}^{-1}))\\
	=&1+\sum_{i=1}^{n}(a_{i}^2+a_{i}^{-2})+\sum_{i=1}^{n}(a_{i}+a_{i}^{-1})+(\sum_{i=1}^{n}(a_{i}^2+a_{i}^{-2}))(\sum_{i=1}^{n}(a_{i}+a_{i}^{-1}))\\
	=&1+\sum_{i=1}^{n}(a_{i}^2+a_{i}^{-2})+2\sum_{i=1}^{n}(a_{i}+a_{i}^{-1})+\sum_{i=1}^{n}(a_{i}^3+a_{i}^{-3})+\sum_{1\le i\ne j\le n}(a_{i}^{2}+a_{i}^{-2})(a_{j}+a_{j}^{-1}),
	\end{align*}
	and
	\begin{align*}
	T^{3}=&(1+\sum_{i=1}^{n}(a_{i}+a_{i}^{-1}))^{3}\\
	=&1+3\sum_{i=1}^{n}(a_{i}+a_{i}^{-1})+3(\sum_{i=1}^{n}(a_{i}+a_{i}^{-1}))^{2}+(\sum_{i=1}^{n}(a_{i}+a_{i}^{-1}))^{3}\\
	=&1+3\sum_{i=1}^{n}(a_{i}+a_{i}^{-1})+3\sum_{i=1}^{n}(a_{i}^{2}+a_{i}^{-2})+6\sum_{1\le i<j\le n}(a_{i}+a_{i}^{-1})(a_{j}+a_{j}^{-1})+6n+\\
	 &\sum_{i=1}^{n}(a_{i}+a_{i}^{-1})^3+3\sum_{1\le i<j\le n}((a_{i}+a_{i}^{-1})^2(a_{j}+a_{j}^{-1})+(a_{i}+a_{i}^{-1})(a_{j}+a_{j}^{-1})^{2})+\\
	&6\sum_{1\le i<j<k\le n}(a_{i}+a_{i}^{-1})(a_{j}+a_{j}^{-1})(a_{k}+a_{k}^{-1})\\
	=&1+6n\sum_{i=1}^{n}(a_{i}+a_{i}^{-1})+3\sum_{i=1}^{n}(a_{i}^{2}+a_{i}^{-2})+\sum_{i=1}^{n}(a_{i}^{3}+a_{i}^{-3})+\\
	&6\sum_{1\le i<j\le n}(a_{i}+a_{i}^{-1})(a_{j}+a_{j}^{-1})+6n+3\sum_{1\le i\ne j\le n}(a_{i}^{2}+a_{i}^{-2})(a_{j}+a_{j}^{-1})+\\
	&6\sum_{1\le i<j<k\le n}(a_{i}+a_{i}^{-1})(a_{j}+a_{j}^{-1})(a_{k}+a_{k}^{-1}).
	\end{align*}
	Combining the above equations, we can get our result.
\end{proof}

\section{Radius equals $2$}\label{radius2}
In this part, we present several nonexistence results concerning linear $PL(n,2)$-codes for infinitely many $n$.

The first one is a slight generalization of the main result obtained by Kim in \cite{K17}, which will be given in Section \ref{sub:approach0}. Then we turn to the group ring language and develop a new approach to prove several new nonexistence results.

Before getting into the details of the proof in Sections \ref{sub:approach1} and \ref{sub:approach2}, we would like to provide a sketch of our approach here. To show the nonexistence of linear $PL(n,2)$-codes, we only have to prove there is no $T\subseteq G$ viewed as an element in $\Z[G]$ satisfying the three conditions in Lemma \ref{lemma1}. For each nontrivial group character $\chi\in\widehat{G}$, by applying it on the third condition in Lemma \ref{lemma1}, we obtain
\begin{equation}\label{eq:sketch}
	\chi(T)^2 = -\chi(T^{(2)}) +2n.
\end{equation}
This is actually an equation with two unknown value $\chi(T)$ and $\chi(T^{(2)})$ over $\Z[\zeta_{\exp(G)}]$, where $\exp(G)$ denotes the exponent of $G$ and $\zeta_{\exp(G)}$ is a primitive $\exp(G)$-th root of unity.

At first glance, we do not have any obvious relation between $\chi(T)$ and $\chi(T^{(2)})$. However, by applying $(\cdot)^{(2)}$ on $T^{2}= 2G-T^{(2)} +2n$ recursively,  we get
\begin{align*}
	\chi(T^{(2)})^2 &= -\chi(T^{(2^2)}) +2n,\\
	\chi(T^{(2^2)})^2 &= -\chi(T^{(2^3)}) +2n,\\
	&~~\vdots\\
	\chi(T^{(2^k)})^2 &= -\chi(T) +2n,
\end{align*}
where $k$ is some positive integer. By those equations above, we can eliminate all $\chi(T^{(2^i)})$ for $i=1,2,\cdots, k$ and get an equation only involving $\chi(T)$ as unknown. However, in general we do not know the value of $k$ and the degree of this equation in $\chi(T)$ is very large. Hence it seems elusive to solve this equation directly.

Our first trick is to project the group ring equation $T^{2}= 2G-T^{(2)} +2n$ onto a small quotient group of $G$. For instance, in Theorem \ref{th:r=2_v=5}, we assume that $5$ divides $|G|$ and we consider the image $\overline{T}$ of $T$ in $G/H\cong C_5$ which is the cyclic group of order $5$. In such a small group, we can easily handle $\overline{T}$ and $\overline{T}^{(2)}$, because $\overline{T}^{(4)} = \overline{T}$. More details will be presented in Section \ref{sub:approach1}.

When the smallest prime divisor of $|G|$ is getting larger, even by computer program, it seems impossible to find the univariate polynomial in $\chi(T)$. Hence we can only handle the case in which $5, 13$ or $17 $ divides $|G|$ in Section \ref{sub:approach1}.

Another possible way is to consider our group ring equations modulo $p$ where $p$ is a prime divisor of $n$. Suppose that $-\alpha = \chi(\overline{T})$. Then
\begin{equation*}
\chi(\overline{T}^{(2^i)})\equiv -\alpha^{2^i} \pmod{p}.
\end{equation*}
Consequently we have $\chi(\overline{T}^{(2^ip^j)})\equiv -\alpha^{2^ip^j} \pmod{p}$. For a given $\chi$, if all $\chi((\cdot)^{(2^ip^j)})$ together are exactly the nontrivial characters of $G$, then we may use the inversion formula in Lemma \ref{inversion_formula} to derive the coefficient $a_g$ of each $g$ in $T$. As $a_g\in \{0,1\}$, we get strong restrictions on the value of $\alpha$. In Theorem \ref{th:union_criteria}, we will apply some known results from algebraic number theory and finite fields on them to derive necessary conditions.

\subsection{A generalization of Kim's approach}\label{sub:approach0}
First we generalize the main theorem in \cite{K17}, which is about the nonexistence of perfect $2$-error-correcting Lee codes when $|G|=2n^2+2n+1$ is a prime. Compared with the original one in \cite{K17}, our proof is more or less the same, however we do not need the assumption that $|G|$ is a prime.

\begin{theorem}\label{th:kim_generalization}
	Suppose that $2n^2+2n+1=mv$ where $v$ is a prime and $v>2n+1$. Define
	$a = \min\{ a\in \Z^+: v\mid 4^a+4n+2 \}$ and $b$ is the order of $4$ modulo $v$. (If there is no $a$ with $v\mid 4^a+4n+2$, then we let $a=\infty$.) Assume that there is a linear $PL(n,2)$-code. Then there exists at least one $\ell \in \{0,1,\dots, \lfloor \frac{m}{4}\rfloor \}$ such that the equation
	\[ a(x+1) +by=n-\ell \]
	has nonnegative integer solutions.
\end{theorem}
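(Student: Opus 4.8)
The plan is to exploit the group ring equation $T^2 = 2G - T^{(2)} + 2n$ from Lemma~\ref{lemma1} together with the hypothesis that $v \mid |G|$ is a large prime. First I would fix a character $\chi$ of $G$ of order $v$ and set $\alpha = \chi(T) \in \Z[\zeta_v]$. Since $v > 2n+1$ and $v$ is prime, the ideal $(v)$ in $\Z[\zeta_v]$ factors in a controlled way (in fact $v$ is totally ramified: $(v) = (1-\zeta_v)^{v-1}$), so I can work in the residue field $\F_v$ by reducing modulo the prime $\mathfrak{p} = (1-\zeta_v)$; under this reduction $\zeta_v \mapsto 1$, hence $\chi(T) \bmod \mathfrak{p}$ equals $|T| \bmod v$, and more usefully, applying $\chi$ to $T^2 = 2G - T^{(2)} + 2n$ and reducing modulo $\mathfrak{p}$ gives a recursion. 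The key observation — already sketched in the paper's outline — is that applying $(\cdot)^{(2)}$ repeatedly yields $\chi(T^{(2^i)})^2 = -\chi(T^{(2^{i+1})}) + 2n$, and since $\chi^{(2^i)}$ cycles through characters whose behaviour is governed by the order $b$ of $4$ modulo $v$, after $b$ steps we return to a character equivalent to $\chi$ (in the relevant sense). Chaining these identities eliminates the intermediate unknowns and pins down $\alpha^{2^i} \bmod v$.

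The second step is to count carefully. Writing $T = 1 + \sum_{i=1}^n (a_i + a_i^{-1})$, I would apply $\chi$ to get $\alpha = 1 + \sum_{i=1}^n (\chi(a_i) + \chi(a_i)^{-1})$, so $\alpha$ is a sum of $1$ and $n$ terms each of the form $\zeta + \zeta^{-1}$ for roots of unity $\zeta$. Reducing modulo $\mathfrak{p}$ (equivalently, specializing $\zeta_v \to 1$) collapses each $\chi(a_i)$ either to $1$ — contributing $2$ — or to a primitive $v$-th root whose image is still handled inside $\F_{v^b}$ via the Frobenius orbit structure. I expect the combinatorial bookkeeping to split the $n$ generators into those landing in the kernel of $\chi$ and those not, producing a parameter $\ell$ (the number of "trivial" contributions, or half of it, which is why the range is $0 \le \ell \le \lfloor m/4 \rfloor$), and the recursion $\chi(T^{(2^a)}) = -\chi(T) + \text{(known)}$ together with $v \mid 4^a + 4n + 2$ forces $4^a \equiv -(4n+2) \pmod v$ exactly when the "multiply by $4$" map closes up after $a(x+1)$ steps and then by $b$ steps $y$ more times. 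Matching the total number of steps to the number $n - \ell$ of non-kernel generators gives the Diophantine relation $a(x+1) + by = n - \ell$.

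The main obstacle will be step two: making precise how the count of generators $a_i$ with $\chi(a_i) = 1$ becomes the parameter $\ell$, and why the remaining generators must organize themselves into full Frobenius orbits of sizes that are multiples of $a$ or $b$. This requires analyzing the action of squaring (equivalently, the Galois group $\Gal(\Q(\zeta_v)/\Q) \cong \Z_v^*$, restricted to the subgroup generated by $4$) on the multiset $\{\chi(a_i)\}$, and using that $T = T^{(-1)}$ so the multiset is symmetric under inversion. The bound $\lfloor m/4 \rfloor$ on $\ell$ should come from a divisibility/size constraint: the kernel of $\chi$ restricted to $G$ has index $v$, so at most $m = |G|/v$-many "slots" are available, and the factor $4$ reflects the inversion symmetry together with the fact that each generator contributes two terms $a_i, a_i^{-1}$. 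I would close by verifying that if no such $\ell$ yields a solution to $a(x+1) + by = n-\ell$ in nonnegative integers, then the recursion is inconsistent with $a_g \in \{0,1\}$ for all $g$, contradicting the existence of $T$, hence of a linear $PL(n,2)$-code.
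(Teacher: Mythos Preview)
Your proposal has a genuine gap: the reduction modulo $\mathfrak p=(1-\zeta_v)$ sends \emph{every} $v$-th root of unity to $1$, so $\chi(T)\bmod\mathfrak p$, $\chi(T^{(2)})\bmod\mathfrak p$, etc.\ are all just $|T|=2n+1\bmod v$. Plugging this into the recursion $\chi(T)^2=-\chi(T^{(2)})+2n$ yields only the tautology $2(2n^2+2n+1)\equiv 0\pmod v$; no structural information about the individual $a_i$ survives. Your dichotomy ``$\chi(a_i)$ collapses either to $1$ or to a primitive $v$-th root handled in $\F_{v^b}$'' is not a real dichotomy here---since $\chi$ has order $v$, every $\chi(a_i)$ is a $v$-th root of unity and they \emph{all} collapse to $1$. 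Consequently the ``Frobenius orbit'' bookkeeping never gets off the ground, and you have no mechanism that produces the specific coefficient $4^k+4n+2$ or explains why $a,b$ enter as they do.

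The paper's argument is not character-theoretic at all. It works additively: take $H\le G$ of index $v$, project the generators $s_i$ to $x_i\in G/H\cong\F_v$, and compute the sum of $2k$-th powers (using the \emph{field} multiplication in $\F_v$) of every element in the partition of $m\cdot(G/H)$. Direct expansion gives
\[
(4^{k}+4n+2)\,\overline S_{2k}+2\sum_{t=1}^{k-1}\binom{2k}{2t}\overline S_{2t}\,\overline S_{2(k-t)}
=\begin{cases}0,&(v-1)\nmid 2k,\\ -m,&(v-1)\mid 2k,\end{cases}
\]
where $\overline S_t=\sum_i x_i^t$. Induction on $k$ then shows $\overline S_{2k}=0$ whenever $k\notin X=\{ax+by:x\ge1,y\ge0\}$ and $k<(v-1)/2$, and Newton's identities transfer this to the elementary symmetric polynomials $e_k$ in the $x_i^2$. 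Finally, if exactly $\ell$ of the $x_i$ vanish (so $\ell\le\lfloor m/4\rfloor$ because each such $x_i$ contributes four zeros $\pm x_i,\pm2x_i$ to the $m$ copies of $0$), then $e_{n-\ell}\ne0$, forcing $n-\ell\in X$. The appearance of $a$ and $b$ is thus purely arithmetic: $a$ is where the leading coefficient $4^k+4n+2$ first vanishes mod $v$, and $b$ is its period. None of this is visible from the character recursion you set up.
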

\begin{proof}
	Within this proof, we let the abelian group $G$ be additive and let $0$ be its identity element.
	
	By Corollary \ref{coro1}, there exists $S=\{s_i: i=1,\dots, n\}\subseteq G$ such that
	\[ \{0\}, \{\pm s_i:  i=1,\dots, n\}, \{\pm2s_i: i=1,\dots, n \}, \{\pm s_i\pm s_j: 1\leq i<j\leq n\}  \]
	form a partition of $G$.
	
	Let $H$ be a subgroup of $G$ of index $v$. Let $\rho:G \rightarrow G/H$ be the canonical homomorphism and $x_i=\rho(s_i)$. Then the multisets
	\[\{0\}, \{*~\pm x_i:  i=1,\dots, n~*\}, \{*~\pm2x_i: i=1,\dots, n ~*\}, \{*~\pm x_i\pm x_j: 1\leq i<j\leq n~*\}  \]
	form a partition of $mG/H$.
	
	As most of the rest part is basically the same as the proof in \cite{K17}, we will omit some details of the computation. Let $k$ be an integer. By calculation,
	\begin{align*}
	&\sum_{i=1}^{n}\left((x_i^{2k} + (-x_i)^{2k} +(2x_i)^{2k} +(-2x_i)^{2k}\right)\\
	&+\sum_{1\leq i < j\leq n}\left( (x_i+x_j)^{2k} + (x_i-x_j)^{2k} + (-x_i+x_j)^{2k} +(-x_i-x_j)^{2k} \right)\\
	=& (4^{k} + 4n + 2)\overline{S}_{2k} + 2\sum_{t=1}^{k-1}\binom{2k}{2t}\overline{S}_{2t}\overline{S}_{2(k-t)}
	\end{align*}
	where $\overline{S}_t := \sum_{i=1}^{n}x_i^t$. Since this is also the sum of the $2k$-th powers of every element in $mG/H$,
	\begin{equation}\label{eq:kim_main}
	(4^{k} + 4n + 2)\overline{S}_{2k} + 2\sum_{t=1}^{k-1}\binom{2k}{2t}\overline{S}_{2t}\overline{S}_{2(k-t)}=
	\begin{cases}
	0,  & v-1 \nmid 2k,\\
	-m, & v-1 \mid 2k.
	\end{cases}
	\end{equation}
	
	Let $a$ and $b$ be the least positive integers satisfying $v\mid 4^a+4n+2$ and $v\mid 4^b-1$. Define
	\[ X = \{ax+by : x\geq 1, y\geq 0\}. \]
	We prove the following two claims by induction on $k$.
	
	\textbf{Claim 1:} If $1\le k < \frac{v-1}{2}$ is not in $X$, then $\overline{S}_{2k}=0$.
	
	Suppose that $\overline{S}_{2k}=0$ for each $k\le k_0-1$ that is not in $X$. Assume that $k_0\notin X$. As $X$ is closed under addition, for any $t$, at least one of $t$ and $k_0-t$ is not in $X$.
	
	For any integer $k$, if $v\mid 4^k+4n+2$, then $k$ must be of the form $a+by$ whence $k\in X$. This implies that $v\nmid 4^{k_0}+4n+2$. By \eqref{eq:kim_main} and the induction hypothesis,
	\[  0=(4^{k_0} + 4n + 2)\overline{S}_{2k_0} + 2\sum_{t=1}^{k_0-1}\binom{2k_{0}}{2t}\overline{S}_{2t}\overline{S}_{2(k_0-t)}=(4^{k_0} + 4n + 2)\overline{S}_{2k_0}.\]
	Thus $\overline{S}_{2k_0}= 0$.
	
	Let $e_k$ be the elementary symmetric polynomials with respect to $x_1^2$, $x_2^2$, $\cdots$, $x_n^2$.
	
	\textbf{Claim 2}: If $1\le k\le n< \frac{v-1}{2}$ is not in $X$, then $e_k=0$.
	
	Suppose that $e_k=0$ for each $k\leq k_0-1$ not in $X$ and $k_0\notin X$. As $X$ is closed under addition, for each $0<t<k_0$, at least one of $t$ and $k_0-t$ is not in $X$. By Claim 1 and the inductive hypothesis, $e_t=0$ or $S_{2(k_0-t)}=0$.
	Together with Newton identities on $x_1^2,\dots, x_n^2$, we have
	\[ k_0e_{k_0} = e_{k_0-1}S_2 + \dots + (-1)^{i+1} e_{k_0-i}S_{2i}+\dots +(-1)^{k_0-1}S_{2k_0}=(-1)^{k_0-1}S_{2k_0}=0.\]
	Therefore $e_{k_0}=0$.
	
 Note that if $x_{i}=0$, then $-x_{i}=2x_{i}=-2x_{i}=0$. As $0$ appears exactly $m$ times in $mG/H$, $0$ appears at most $\lfloor \frac{m}{4} \rfloor$ times in $x_i$'s. Suppose that $0$ appears $\ell$ times in $\overline{S}$. Then $e_{n-l}$ is the product of those nonzero $x_i^2$'s, whence $e_{n-l}\neq 0$. By Claim 2, $n-l$ is in $X$. Therefore, we have finished the proof.
\end{proof}

In general, Theorem \ref{th:kim_generalization} is quite strong. In particular, when $2n^2+2n+1$ is a prime, by Corollary \ref{coro1}, the assumption that $PL(n,2)$-codes are linear in the statement of Theorem \ref{th:kim_generalization} can be removed. This result is exactly what Kim has proved in \cite{K17}. It is not difficult to verify that the nonexistence results in \cite{QCC18} are also covered by Theorem \ref{th:kim_generalization}.

Theorem \ref{th:kim_generalization} provides us new nonexistence results for linear $PL(n,2)$-codes when $|G|$ is not a prime. For instance, when $n=6$, $|G|=85=5\cdot 17$, by computer program we can verify that the necessary condition in Theorem \ref{th:kim_generalization} is not satisfied.

In Table \ref{table:r=2_kim}, we list the cardinality of $n$ for which the existence of linear $PL(n,2)$-codes can be excluded by Theorem \ref{th:kim_generalization}.
\begin{table}[h!]
	\centering
	\begin{tabular}{|c|c|c|c|c|c|}
		\hline
		$N$ 		& $10$ & $10^2$ & $10^3$ & $10^4$ & $10^5$ \\
		\hline
		$\# \{n\leq N: \text{Corollary \ref{coro:r=3_v=7} can be applied}\}$  & $5$ & $68$ & $713$ & $7147$ & $71254$ \\
		\hline
	\end{tabular}
	\caption{The number of $n$ to which Theorem \ref{th:kim_generalization} can be applied.}\label{table:r=2_kim}
\end{table}

\subsection{Nonexistence for $|G|$ with a small prime divisor}\label{sub:approach1}
If we want to use Theorem \ref{th:kim_generalization} to prove the nonexistence of perfect Lee codes, we always require at least one fairly large prime divisor $v$ of $|G|$ to guarantee that $v>2n+1$ and $a(x+1) +by=n-\ell$ has no solutions for each $\ell \in \{0,1,\dots, \lfloor \frac{|G|}{4v}\rfloor \}$. Next we investigate the opposite case in which $|G|$ has fairly small divisors. It is not difficult to see that $5$, $13$ and $17$ are the first $3$ possible prime divisors of $2n^2+2n+1$.

In the rest part of this section, for a positive integer $n$, we always let $G$ be a multiplicative group of order $2n^2+2n+1$ with identity element denoted by $1$. By Lemma~\ref{lemma1}, we consider the existence of $T\subseteq G$ viewed as an element in $\Z[G]$ satisfying
\begin{enumerate}[label=(\alph*)]
	\item \label{condition:main_n=2_0} $1\in T$,
	\item \label{condition:main_n=2_1} $T=T^{(-1)}$,
	\item \label{condition:main_n=2_2} $T^{2}= 2G-T^{(2)} +2n$.
\end{enumerate}
Here $T^{(j)}:= \sum a_g g^j$ for $T=\sum a_g g$. Clearly $T$ contains $2n+1$ elements.

For $\chi\in \widehat{G}$,
\begin{equation}\label{eq:main_n=2_character}
	\chi(T)^2 = \left\{
	\begin{array}{ll}
	-\chi(T^{(2)})+2n, & \chi\neq \chi_0, \\
	(2n+1)^2,& \chi= \chi_0.
	\end{array}
	\right.
\end{equation}

	Let $H$ be a subgroup of $G$ with order $m$ and $\rho: G\rightarrow G/H$ be the canonical homomorphism. For $S=\sum_{g\in G}s_g g$, we define $\overline{S}=\rho(S)= \sum_{g\in G} s_g \rho(g)$. Thus
\[\overline{T}=\sum_{\bar{g}\in G/H}a_{\bar{g}} \bar{g}\in \Z[G/H],\]
where $a_{\bar{g}}=\sum_{\{g: \rho(g)= \bar{g}\}} a_g$. Hence $a_{\bar{g}}\in \{0,\dots,m\}$.
By computation,
\[\overline{S^{(j)}} = \sum_{g\in G} s_g \rho(g^{(j)})= \sum_{g\in G} s_g \rho(g)^{(j)}=\overline{S}^{(j)}.\]
If $T$ satisfies Conditions \ref{condition:main_n=2_1} and \ref{condition:main_n=2_2}, then the following two equations must hold.
\begin{enumerate}[label=(\alph*')]\setcounter{enumi}{1}
	\item \label{condition:main_n=2_1'} $\overline{T}=\overline{T}^{(-1)}$,
	\item \label{condition:main_n=2_2'} $\overline{T}^2= 2mG/H-\overline{T}^{(2)} +2n$.
\end{enumerate}

First, we investigate a very special case for which Conditions \ref{condition:main_n=2_1'} and \ref{condition:main_n=2_2'} hold.
\begin{lemma}\label{lm:SS_r=2}
	Let $S=a+bG\in \Z[G]$ with $|G|=v$. Assume that positive integers $v$ and $m$ satisfy $a+vb=2n+1$ and $mv=2n^2+2n+1$, and $S$ satisfies
	\begin{equation}\label{eq:S^2}
	S^2 = 2mG - S+2n.	
	\end{equation}
 	Then $8n+1$ is a square in $\Z$.
\end{lemma}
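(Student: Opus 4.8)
The plan is to substitute the explicit form $S = a + bG$ into equation~\eqref{eq:S^2} and compare coefficients in $\Z[G]$. Note first that $a$ and $b$ are forced to be integers: $b$ is the common coefficient of every non-identity element of $S$, and $a+b$ is the coefficient of the identity, so both are integers because $S\in\Z[G]$. So the problem reduces to extracting a numerical constraint on $a$.

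The key computational input is the identity $G^2 = vG$ in $\Z[G]$, which holds because for each $k\in G$ there are exactly $v=|G|$ pairs $(g,h)$ with $gh=k$. Expanding then gives
\[ S^2 = (a+bG)^2 = a^2 + 2abG + b^2G^2 = a^2 + (2ab + b^2 v)G, \]
whereas the right-hand side of~\eqref{eq:S^2} equals
\[ 2mG - S + 2n = (2n - a) + (2m - b)G. \]
Both sides have the shape $c\cdot 1 + d\cdot G$ with $c,d\in\Z$, and two such elements coincide in $\Z[G]$ precisely when their $(c,d)$ pairs agree: comparing the coefficient of a non-identity element pins down the $G$-parts, and then comparing the coefficient of the identity pins down the constant parts. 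Matching $G$-parts yields $2ab + b^2 v = 2m - b$, and matching constant parts yields $a^2 = 2n - a$, i.e. $a^2 + a - 2n = 0$.

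Multiplying this last relation by $4$ gives $(2a+1)^2 = 8n+1$, so $8n+1$ is a square in $\Z$, as claimed. I do not anticipate any real obstacle: the whole argument is a short group-ring manipulation, and the only delicate point is the bookkeeping that correctly separates the ``identity part'' from the ``$G$ part'' when comparing coefficients. Incidentally, the conclusion uses only equation~\eqref{eq:S^2}; the side conditions $a+vb = 2n+1$ and $mv = 2n^2+2n+1$ are not needed in the proof itself — they merely record that $S$ is intended to be the image $\overline{T}$ of a putative linear $PL(n,2)$-code in the quotient $G/H$, and will be invoked only when the lemma is applied.
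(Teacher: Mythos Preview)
Your proof is correct and follows essentially the same route as the paper: expand $S^2$ using $G^2=vG$, compare with the right-hand side in the basis $\{1,G\}$, and read off $a^2+a-2n=0$ from the identity component. Your extra observation that the hypotheses $a+vb=2n+1$ and $mv=2n^2+2n+1$ are not actually invoked matches the paper's own remark that the $G$-coefficient yields no additional restriction.
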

\begin{proof}
	By calculation,
	\[ S^2 = ((2n+1)b+ab)G+a^2. \]
	By comparing the coefficient of $1$ in \eqref{eq:S^2}, we get
	\[ a^2+a-2n=0, \]
	which means that $8n+1$ must be a square.
\end{proof}
\begin{remark}
	We can also further compare the coefficient of $G$ in \eqref{eq:S^2}. However, it will not offer us any extra restriction. Moreover, when $c=\sqrt{8n+1}\in \Z$, we can also verify that the condition $v\mid bv = 2n+1-a=\frac{c^2-1}{4}+1-\frac{-1\pm c}{2}=1+\frac{(c\mp 1)^2}{4}$ holds for every divisor $v$ of $2n^2+2n+1$.
\end{remark}

It is straightforward to verify that the smallest possible value of $v$ dividing $2n^2+2n+1$ is $5$. First, let us look at the existence of $\overline{T}$ in $G/H$ which is isomorphic to the cyclic group $C_5$ of order $5$.
\begin{theorem}\label{th:r=2_v=5}
	Assume that $5\mid 2n^2+2n+1$, $8n+1$ is a non-square and $8n-3\neq 5k^2$ for all $k\in \Z$. Then there is no subset $\overline{T} \subseteq C_5$ of size $2n+1$ satisfying Conditions \ref{condition:main_n=2_1'} and \ref{condition:main_n=2_2'}.
\end{theorem}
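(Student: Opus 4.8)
The plan is to work entirely inside the small group $C_{5}$, convert Conditions~\ref{condition:main_n=2_1'}--\ref{condition:main_n=2_2'} into a system of three polynomial equations in three integer unknowns, and show the system has no solution unless $8n+1$ is a perfect square or $8n-3=5k^{2}$ for some integer $k$. Throughout, set $m:=(2n^{2}+2n+1)/5$, which is a positive integer by the hypothesis $5\mid 2n^{2}+2n+1$.

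Write $C_{5}=\langle g\rangle$ and put $\alpha=g+g^{4}$ and $\beta=g^{2}+g^{3}$, so $\{1,\alpha,\beta\}$ is a $\Z$-basis of the subring of $\Z[C_{5}]$ fixed by $(\cdot)^{(-1)}$. By Condition~\ref{condition:main_n=2_1'} we may write $\overline{T}=x+y\alpha+z\beta$ with $x,y,z$ nonnegative integers, and $|\overline{T}|=2n+1$ gives $x+2y+2z=2n+1$. The relations $\alpha^{2}=2+\beta$, $\beta^{2}=2+\alpha$, $\alpha\beta=\alpha+\beta$, together with $\alpha^{(2)}=\beta$ and $\beta^{(2)}=\alpha$, let us expand $\overline{T}^{2}$ and $\overline{T}^{(2)}=x+z\alpha+y\beta$; comparing the coefficients of $1,\alpha,\beta$ in Condition~\ref{condition:main_n=2_2'}, namely in $\overline{T}^{2}=2m(1+\alpha+\beta)-\overline{T}^{(2)}+2n$, produces three scalar equations. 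The difference of the $\alpha$- and $\beta$-equations factors as $(y-z)(2x-y-z-1)=0$, which splits the analysis into two branches.

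In the branch $y=z$, I would substitute $x=2n+1-4y$ into the $\alpha$-equation to get $2m=(4n+3)y-5y^{2}$; multiplying by $5$ and using $5m=2n^{2}+2n+1$ yields the quadratic $25y^{2}-5(4n+3)y+(4n^{2}+4n+2)=0$, whose discriminant is exactly $25(8n+1)$. Thus even a rational $y$ forces $8n+1$ to be a square, which is excluded. In the branch $2x=y+z+1$, the size equation forces $y+z=(4n+1)/5$; summing the $\alpha$- and $\beta$-equations and using $2x=y+z+1$ gives $yz=2m-(y+z)-(y+z)^{2}$, hence $(y-z)^{2}=5(y+z)^{2}+4(y+z)-8m$, and plugging in $y+z=(4n+1)/5$ and $m=(2n^{2}+2n+1)/5$ collapses the right-hand side to $(8n-3)/5$. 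So $8n-3=5(y-z)^{2}$, which is again excluded. Hence no $\overline{T}$ with the required properties exists. (Only the integrality of $x,y,z$ is used; the ranges $0\le x,y,z\le m$ are not needed.)

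I expect the group-ring arithmetic in $\Z[C_{5}]$ to be completely routine, precisely because $C_{5}$ is tiny; the only real content is the factorization $(y-z)(2x-y-z-1)=0$ and the fact that each branch collapses to exactly one of the two number-theoretic obstructions forbidden by hypothesis. The one place to be careful is tracking the action of $(\cdot)^{(2)}$ on $C_{5}$ — it fixes $1$ and interchanges $\alpha$ and $\beta$ — and inserting the constraints $|\overline{T}|=2n+1$ and $5m=2n^{2}+2n+1$ at the right steps so that both branches genuinely terminate; I do not foresee a substantive obstacle beyond this.
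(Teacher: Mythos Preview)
Your argument is correct, and the two branches you isolate are exactly the two cases the paper lands in, but you reach them by a different and more elementary route. The paper works modulo the element $G/H$ (so in the integral domain $\Z[\zeta_5]$), eliminates $\overline{T}^{(2)}$ using $\overline{T}^{(4)}=\overline{T}$, and factors the resulting quartic as $(\overline{T}^2-\overline{T}-2n+1)(\overline{T}^2+\overline{T}-2n)\equiv 0$; the second factor forces $\overline{T}=\overline{T}^{(2)}$ (hence, in your coordinates, $y=z$) and is then killed by Lemma~\ref{lm:SS_r=2}, while the first factor is handled by applying a nonprincipal character and invoking the fact that $\sqrt{8n-3}\notin\Q(\zeta_5)$ when $8n-3\neq 5k^2$ (via the discriminant of $\Q(\sqrt{8n-3})$). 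Your difference-of-coefficients factorization $(y-z)(2x-y-z-1)=0$ is precisely the coordinate shadow of that quartic factorization, and in each branch you finish with a bare discriminant calculation in $\Z$, completely avoiding characters and cyclotomic fields. The payoff of your approach is a self-contained, purely arithmetic proof for $v=5$; the payoff of the paper's approach is that it is the template that scales (with resultants and computer assistance) to $v=13$ and $v=17$ in Theorems~\ref{th:r=2_v=13} and~\ref{th:r=2_v=17}, where a direct coordinate attack would involve many more unknowns.
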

\begin{proof}
	Assume, by way of contradiction, that $\overline{T}\subseteq C_5$ satisfies Conditions \ref{condition:main_n=2_1'} and \ref{condition:main_n=2_2'}.
	From Condition \ref{condition:main_n=2_2'}, we have
	\[\overline{T}^2 \equiv -\overline{T}^{(2)} + 2n \pmod{G/H}.\]
	Changing $\overline{T}$ to $\overline{T}^{(2)}$, note that $\overline{T}^{(4)}=\overline{T}$, we get that
	\[ (\overline{T}^{(2)})^2 \equiv -\overline{T}^{(4)} + 2n =-\overline{T} + 2n\pmod{G/H}.\]
	Combining above two equations, we have
	\[ \overline{T}^4-4n\overline{T}^2+\overline{T}+4n^2-2n\equiv 0\pmod{G/H}. \]
	By calculation,
	\[\overline{T}^4-4n\overline{T}^2+\overline{T}+4n^2-2n = (\overline{T}^2-\overline{T}-2n+1)(\overline{T}^2+\overline{T}-2n).\]
	As there is no zero divisors in $\Z[G/H]$ and $G/H$ is irreducible in it, one of the two factors must be congruent to $0$ modulo $G/H$.
	
	If the second factor is congruent to $0$ modulo $G/H$, then by a simple counting argument we can show that
	\[\overline{T}^2=2mG/H-\overline{T}+2n.\]
	Together with Condition \ref{condition:main_n=2_2'}, it follows that $\overline{T} = \overline{T}^{(2)}$. As $2$ is primitive modulo $5$, $\overline{T}$ must equal to $a+bG$ for some $a,b\in \Z$. However, by Lemma \ref{lm:SS_r=2}, it contradicts our assumption that $8n+1$ is not a square.
	
	Suppose that $\overline{T}^2-\overline{T}-2n+1\equiv 0 \pmod{G/H}$. Take a non-principle character $\chi\in \widehat{G/H}$, then $\chi(\overline{T})\in \Z[\zeta_5]$ is such that
	\begin{equation}\label{eq:v=5_chi_1}
		 \chi(\overline{T})^2 - \chi(\overline{T}) -2n+1=0,
	\end{equation}
	which means $8n-3$ is a square in $\Z[\zeta_5]$. By checking $8n-3 \pmod{8}$, it is easy to see that $8n-3$ has a square divisor if and only if $8n-3=tk^2$, $t\equiv5\pmod{8}$. Under the condition that $8n-3\neq 5k^2$, by \cite[page 263]{weiss_algebraic_1998}, the smallest cyclotomic field containing $\K=\Q(\sqrt{8n-3})$ is $\Q(\zeta_{d(\K)})$ where $d(\K)=t$ is the discriminant of $\K$. Since $d(\K)=t>5$, there is no $\chi(\overline{T})$ such that \eqref{eq:v=5_chi_1} holds.
	
	Therefore, there is no $\overline{T} \subseteq C_5$ satisfying Conditions \ref{condition:main_n=2_1'} and \ref{condition:main_n=2_2'}.
\end{proof}
\begin{remark}\label{remark:n=5_infinite}
	Theorem \ref{th:r=2_v=5} proves the nonexistence of $\overline{T} \subseteq C_5$ satisfying Conditions \ref{condition:main_n=2_1'} and \ref{condition:main_n=2_2'} for infinitely many $n$. There are many different ways to show it. For instance, it can be readily verified that if $n\equiv 4,5 \pmod{7}$ and $n\equiv 1 \pmod 5$, then all the assumptions in Theorem \ref{th:r=2_v=5} hold.
\end{remark}

To prove further results, we need the following results about the decomposition of a prime $p$ into prime ideals in $\Z[\zeta_w]$ which can be found in \cite{N99}.
\begin{lemma}\label{lm:ideal_decomp}
	Let $p$ be a prime and $\zeta_w$ be a primitive $w$-th root of unity in $\mathbb{C}$. If $w=p^rw'$ with $\gcd(w',p)=1$, then the prime ideal decomposition of $(p)$ in $\Z[\zeta_w]$ is
	$$(p) = (P_1P_2\cdots P_d)^e,$$
	where $P_i$'s are distinct prime ideals, $e=\varphi(p^r)$, $d=\varphi(w')/f$ and $f$ is the order of $p$ modulo $w'$. If $t$ is an integer not divisible by $p$ and $t\equiv p^s \pmod{w'}$ for a suitable integer $s$, then the field automorphism $\sigma_t :\zeta_{w'} \mapsto \zeta_{w'}^t=\zeta_{w'}^{p^s}$  fixes the ideals $P_i$.
\end{lemma}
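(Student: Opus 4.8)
The plan is to establish this by the standard theory of ramification in the cyclotomic tower $\Q\subseteq\Q(\zeta_{w'})\subseteq\Q(\zeta_w)$, exploiting that $\Z[\zeta_w]$ is the full ring of integers of $\Q(\zeta_w)$ and that the ramification index $e$ and residue degree $f$ are multiplicative in towers of number fields. First I would isolate the two prime-power building blocks. Since $\gcd(p,w')=1$, the prime $p$ is unramified in $\Q(\zeta_{w'})/\Q$, and by Dedekind's factorization criterion its splitting is governed by the factorization of the $w'$-th cyclotomic polynomial modulo $p$; as this polynomial factors over $\F_p$ into $\varphi(w')/f$ distinct irreducibles each of degree $f$, where $f$ is the order of $p$ modulo $w'$, we get $(p)=\mathfrak{q}_1\cdots\mathfrak{q}_d$ in $\Z[\zeta_{w'}]$ with $d=\varphi(w')/f$, each $\mathfrak{q}_i$ of residue degree $f$, and with decomposition group at any $\mathfrak{q}_i$ cyclic, generated by the Frobenius $\zeta_{w'}\mapsto\zeta_{w'}^p$. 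On the other side, $p$ is totally ramified in $\Q(\zeta_{p^r})/\Q$ with $(p)=(1-\zeta_{p^r})^{\varphi(p^r)}$ and residue degree $1$. Comparing degrees, $[\Q(\zeta_w):\Q]=\varphi(p^r)\varphi(w')$ equals the product of the degrees of the two subfields, so $\Q(\zeta_{p^r})$ and $\Q(\zeta_{w'})$ are linearly disjoint over $\Q$ and $[\Q(\zeta_w):\Q(\zeta_{w'})]=\varphi(p^r)$.

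Next I would combine the two pieces by localizing. Fix a prime $\mathfrak{q}$ of $\Q(\zeta_{w'})$ above $p$; its completion is an unramified extension of $\Q_p$ of degree $f$, and adjoining $\zeta_{p^r}$ to it yields a totally ramified extension of degree exactly $\varphi(p^r)$, since an unramified local extension of $\Q_p$ and the totally ramified extension $\Q_p(\zeta_{p^r})/\Q_p$ are linearly disjoint over $\Q_p$. Hence every prime of $\Q(\zeta_{w'})$ above $p$ is totally ramified with ramification index $\varphi(p^r)$ in $\Q(\zeta_w)/\Q(\zeta_{w'})$. By multiplicativity of $e$ and $f$ in the tower, $p$ has ramification index $e=\varphi(p^r)$ and residue degree $f$ in $\Q(\zeta_w)/\Q$, hence $g=\varphi(w)/(ef)=\varphi(w')/f=d$ primes $P_1,\dots,P_d$ above it, giving $(p)=(P_1\cdots P_d)^e$.

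For the Galois-stability claim I would use that $\Gal(\Q(\zeta_w)/\Q)\cong(\Z/w)^\times$ is abelian, so the decomposition and inertia groups of a prime $P_i$ above $p$ are independent of $i$; write them $D$ and $I$. By the ramification description just obtained, $p$ is unramified throughout $\Q(\zeta_{w'})/\Q$ and totally ramified in $\Q(\zeta_w)/\Q(\zeta_{w'})$, so $\Q(\zeta_{w'})$ is the inertia field, $I=\Gal(\Q(\zeta_w)/\Q(\zeta_{w'}))$, and $D$ is the preimage under restriction of the Frobenius subgroup $\langle\zeta_{w'}\mapsto\zeta_{w'}^p\rangle\leq\Gal(\Q(\zeta_{w'})/\Q)$. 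An automorphism $\sigma_t:\zeta_w\mapsto\zeta_w^t$ restricts on $\Q(\zeta_{w'})$ to $\zeta_{w'}\mapsto\zeta_{w'}^t$, which lies in this Frobenius subgroup precisely when $t\equiv p^s\pmod{w'}$ for some $s$; therefore such $\sigma_t$ lies in $D$, and since $D$ is exactly the stabilizer of each $P_i$, the ideals $P_1,\dots,P_d$ are all fixed by $\sigma_t$.

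The main obstacle is the middle step: showing that a prime $\mathfrak{q}$ of $\Q(\zeta_{w'})$ over $p$ becomes totally ramified of ramification index precisely $\varphi(p^r)$ in $\Q(\zeta_w)/\Q(\zeta_{w'})$, rather than merely dividing $\varphi(p^r)$. This is the one place where the unramified and totally ramified behaviours genuinely interact, and it rests on the linear disjointness of unramified and totally ramified local extensions over $\Q_p$ (equivalently, on the explicit computation that the different of $\Q_p(\zeta_{p^r})/\Q_p$ realizes the full ramification). Once that is in hand, the remaining arguments are routine bookkeeping with multiplicativity of $e$ and $f$ and with the Galois correspondence between subgroups of $(\Z/w)^\times$ and subfields of $\Q(\zeta_w)$.
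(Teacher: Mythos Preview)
Your argument is correct and follows the standard textbook route: split the tower $\Q\subseteq\Q(\zeta_{w'})\subseteq\Q(\zeta_w)$, handle the unramified and totally ramified pieces separately, combine them via multiplicativity of $e$ and $f$ (using local linear disjointness of unramified and totally ramified extensions of $\Q_p$), and then read off the decomposition group in the abelian Galois group to get the stability under $\sigma_t$. The identification of $\Q(\zeta_{w'})$ as the inertia field and of $D$ as the preimage of the Frobenius subgroup is exactly the right way to see why $t\equiv p^s\pmod{w'}$ is the condition for $\sigma_t$ to fix each $P_i$.

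There is nothing to compare against, however: the paper does not prove this lemma at all. It is quoted as a known fact from Neukirch's \emph{Algebraic Number Theory}, so the ``paper's proof'' is simply a citation. Your sketch is precisely the argument one finds in such a reference, and in fact you have supplied more detail than the paper requires, since in every application there ($v\mid 2n^2+2n+1$ and $p\mid 2n$) one has $\gcd(p,v)=1$, so $r=0$, $e=1$, and the totally ramified layer is trivial. In that simplified setting your ``main obstacle'' disappears entirely and the lemma reduces to the unramified splitting law plus the observation that the decomposition group is generated by Frobenius.
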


As $|G|=2n^2+2n+1$, for every prime divisor $p$ of $2n$, $\gcd(p, |G|)=1$ whence the ideal $(p)$ is unramified over $\Z[\zeta_v]$ for any $v\mid 2n^2+2n+1$.

Next, we look at $v=13$ which is the second smallest possible integer dividing $2n^2+2n+1$. Compared with Theorem \ref{th:r=2_v=5}, it is much more complicated.
\begin{theorem}\label{th:r=2_v=13}
	Assume that $13\mid 2n^2+2n+1$, $8n+1$ is not a square and $8n-3\neq 13k^2$ for all $k\in \Z$. Then there is no subset $\overline{T} \subseteq C_{13}$ of size $2n+1$ satisfying Conditions \ref{condition:main_n=2_1'} and \ref{condition:main_n=2_2'}.
\end{theorem}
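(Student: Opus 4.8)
The plan is to imitate the proof of Theorem~\ref{th:r=2_v=5}, but since $2$ is a primitive root modulo $13$ the orbit of $(\cdot)^{(2)}$ is longer and more work is required. Fix a subgroup $H$ with $G/H\cong C_{13}$ and work in the quotient ring $R:=\Z[G/H]/(G/H)$; because $13$ is prime, the group sum equals $1+x+\cdots+x^{12}=\Phi_{13}(x)$, so $R\cong\Z[\zeta_{13}]$ is an integral domain. Suppose $\overline T\subseteq C_{13}$ of size $2n+1$ satisfies \ref{condition:main_n=2_1'} and \ref{condition:main_n=2_2'}, and put $f(X):=2n-X^2$, so that \ref{condition:main_n=2_2'} reads $\overline T^{(2)}\equiv f(\overline T)\pmod{G/H}$. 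Applying the endomorphism $(\cdot)^{(2)}$ six times and using $2^6\equiv-1\pmod{13}$ together with \ref{condition:main_n=2_1'} gives
\[ f^{(6)}(\overline T)\equiv\overline T\pmod{G/H}. \]
Factor $f^{(6)}(X)-X$ into its period (dynatomic) components $\pm\,\Phi_1(X)\Phi_2(X)\Phi_3(X)\Phi_6(X)$, of degrees $2,2,6,54$, with $\Phi_1=X^2+X-2n$ and $\Phi_2=X^2-X-2n+1$. Since $R$ is a domain, $\Phi_d(\overline T)\equiv0\pmod{G/H}$ for some $d\in\{1,2,3,6\}$.

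The four cases are governed by a single invariant. For a non-principal $\chi\in\widehat{G/H}$ put $y:=\chi(\overline T)\in\Z[\zeta_{13}]$; since $\chi\circ(\cdot)^{(2)}=\sigma_2\circ\chi$, where $\sigma_t\colon\zeta_{13}\mapsto\zeta_{13}^{t}$, the relation $\overline T^{(2)}\equiv f(\overline T)$ yields $\sigma_2(y)=f(y)$, and hence $f^{(k)}(y)=\sigma_2^{k}(y)$ for all $k$. As $\sigma_2$ generates $\Gal(\Q(\zeta_{13})/\Q)$, the $f$-orbit of $y$ is its Galois orbit, so the exact $f$-period of $y$ equals $[\Q(y):\Q]$; by \ref{condition:main_n=2_1'} the element $y$ is real, so this degree $d$ divides $6$, is the same for every $\chi$, and $\Q(y)=\K_d$, the unique subfield of $\Q(\zeta_{13})$ of degree $d$.

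For $d=1$: $y$ is a fixed point of $f$, hence $y=\sigma_2(y)\in\Q$, so the root $\frac{-1\pm\sqrt{8n+1}}{2}$ of $\Phi_1$ is rational and $8n+1$ is a perfect square — contrary to hypothesis. (Alternatively one shows $\overline T=a+b\,(G/H)$ and applies Lemma~\ref{lm:SS_r=2}.) For $d=2$: $y$ is a genuine period-$2$ point, a root of $\Phi_2$, so $y=\frac{1\pm\sqrt{8n-3}}{2}$ and $\sqrt{8n-3}$ lies in the quadratic subfield $\K_2=\Q(\sqrt{13})$ (here $13\equiv1\pmod4$). As $8n-3\equiv5\pmod8$, its squarefree part is $\equiv1\pmod4$ and divides the conductor $13$, hence is $1$ or $13$; it cannot be $1$ ($8n-3$ is not a square modulo $8$), so it is $13$, i.e.\ $8n-3=13k^{2}$ — again contrary to hypothesis.

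The cases $d=3$ and $d=6$ are the substance of the proof. Here $y$ generates the cubic, respectively the sextic (maximal real), subfield $\K_d\subset\Q(\zeta_{13})$ and is a root of $\Phi_d$. Writing $y$ in a $\Z$-basis of $\mathcal O_{\K_d}$ built from Gaussian periods ($1,\eta_1,\eta_2$ for $\K_3$, and $1,\zeta_{13}^{j}+\zeta_{13}^{-j}$ for $\K_6$) and imposing $\sigma_2(y)=2n-y^{2}$ — with $\sigma_2$ permuting the basis in the known way and $y^{2}$ re-expanded via the multiplication table of $\mathcal O_{\K_d}$ — turns the problem into a finite system of polynomial Diophantine equations in the $d$ coordinates of $y$ and the parameter $n$; equivalently, one asks for which $n$ the explicit polynomial $\Phi_d(X,-2n)$ acquires an irreducible factor of degree $d$ over $\Q$ whose splitting field is the conductor-$13$ cyclic field $\K_d$. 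I would resolve this by elimination (resultants, or Gr\"obner bases over $\Q[n]$), using the archimedean bound $|\sigma_s(y)|\le\sqrt{4n+1}$ — which follows from $|\sigma_2(y)|=|2n-y^{2}|\le2n+1$ and $|y|\le2n+1$ — to cut down to finitely many possibilities, and then show that every solution forces $8n+1$ to be a square or $8n-3=13k^{2}$, contradicting the hypotheses; hence no admissible $\overline T$ exists. The genuine obstacle is $d=6$: the polynomial $\Phi_6(X,-2n)$ has degree $54$ with coefficients polynomial in $n$, and excluding a sextic factor that defines $\Q(\zeta_{13})^{+}$ for all admissible $n$ is delicate — the dynamical rigidity $f(y)=\sigma_2(y)$ and the coefficient constraints on $\overline T$ make the computation finite, but pushing the elimination through cleanly is the crux, with $d=3$ a smaller rehearsal of the same difficulty.
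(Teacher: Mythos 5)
Your setup is sound and, for the two quadratic factors, essentially reproduces the paper's argument in a cleaner dynamical language: iterating $\overline{T}^{(2)}\equiv 2n-\overline{T}^2$ six times and using $2^6\equiv-1\pmod{13}$ together with Condition \ref{condition:main_n=2_1'} does yield $f^{(6)}(\overline{T})\equiv\overline{T}$, and your dynatomic factorization into pieces of degrees $2,2,6,54$ coincides with the paper's MAGMA-computed factorization $h_5\equiv(\overline{T}^2-\overline{T}-2n+1)(\overline{T}^2+\overline{T}-2n)\,\ell_3\,\ell_4$; your disposal of $\Phi_1$ and $\Phi_2$ via ``$8n+1$ is a square'' and ``$8n-3=13k^2$'' is exactly the paper's use of Lemma \ref{lm:SS_r=2} and the discriminant/conductor argument from Theorem \ref{th:r=2_v=5}.

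The gap is that the cases $d=3$ and $d=6$ --- which you correctly identify as the entire substance of the theorem --- are not proved; you only describe a plan (Gaussian-period coordinates, elimination over $\Q[n]$, the archimedean bound $|\sigma_s(y)|\le\sqrt{4n+1}$) and concede that ``pushing the elimination through cleanly is the crux.'' Two concrete problems with that plan. First, the archimedean bound does not cut the problem down to finitely many possibilities uniformly in $n$: the number of $y\in\mathcal O_{\K_d}$ with all conjugates bounded by $\sqrt{4n+1}$ grows with $n$, so finiteness holds only for each fixed $n$, whereas the theorem must cover infinitely many $n$. Second, your proposed endgame --- ``show that every solution forces $8n+1$ to be a square or $8n-3=13k^2$'' --- is not justified and is not where the contradiction actually comes from: for $d=3,6$ the element $y$ genuinely generates a cubic or sextic field, so no quadratic-field obstruction is available. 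The paper instead kills these cases by reducing modulo the inert prime $p=11$ (so that $\chi(\overline{T})\bmod p$ lives in $\F_{11^6}$ and $\chi(\overline{T}^{(2^i)})\equiv\tau_1^{p^{\pm i}}$), and then, for each residue of $n$ modulo $11$, enumerating the roots of $\bar\ell_3\bar\ell_4$ and checking three necessary conditions: the recursion $\tau_i^2+\tau_{i+1}-2n\equiv0$, the requirement via the inversion formula \eqref{eq:inversion_formula} that every coefficient $a_{\bar g}$ reduce into the prime field $\F_p$, and $\sum a_{\bar g}\equiv 2n+1\pmod p$. It is these integrality constraints on the coefficients of $\overline{T}$ --- which your sketch mentions only in passing --- that produce the contradiction, and some finite computation of this kind (the paper's runs in MAGMA with $p=11$) is unavoidable; without carrying it out, the theorem is not established.
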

\begin{proof}
	Assume, by way of contradiction, that there exists $\overline{T} \subseteq C_{13}$ satisfying Conditions \ref{condition:main_n=2_1'} and \ref{condition:main_n=2_2'}, which means that
	\begin{equation}\label{eq:v=13_6equations}
	f_i=\overline{T}^{(2^i)}\overline{T}^{(2^i)} +\overline{T}^{(2^{i+1})} - 2n\equiv 0 \pmod{G/H},
	\end{equation}
	for $i=0,1,\dots, 5$. Regarding them as polynomials with variables $\overline{T}^{(2^i)}$, we compute the resultant of $f_0$ and $f_1$ to obtain a polynomial $h_1$ without $\overline{T}^{(2^1)}$. Then we calculate the resultant $h_2$ of $h_1$ and $f_2$ to eliminate $\overline{T}^{(2^2)}$ and the resultant $h_3$ of $h_2$ and $f_3$ to eliminate $\overline{T}^{(2^3)}$... In the end, we obtain $h_5$ which is with only one variable $\overline{T}$. All these calculations can be done using MAGMA \cite{Magma}. Furthermore, $h_5 \pmod{G/H}$ can be factorized into $4$ irreducible factors
	\[	h_5 \equiv (\overline{T}^2-\overline{T}-2n+1)(\overline{T}^2+\overline{T}-2n) \ell_3 \ell_4 \pmod{G/H},\]
	where
	\begin{align*}
	\ell_3=&\overline{T}^6 - \overline{T}^5 - 6\overline{T}^4n + \overline{T}^4 + 4\overline{T}^3n - \overline{T}^3 + 12\overline{T}^2n^2- 6\overline{T}^2n\\ & + \overline{T}^2 - 4\overline{T}n^2 +	4\overline{T}n - \overline{T} - 8n^3 + 8n^2 - 2n + 1,
	\end{align*}
	and $\ell_4$ is of degree $54$ and has much more terms than $\ell_3$. As $h_5$ is obtained from $f_i$'s, $h_5$ must be congruent to $0$ modulo $G/H$. Moreover, by the same argument used in the proof of Theorem \ref{th:r=2_v=5}, the first two irreducible factors of $h_5$ cannot be congruent to $0$ modulo $G/H$. It must be pointed out that the assumption $8n-3\neq 13k^2$ is necessary here, because we need the condition that the discriminant of $\Q(\sqrt{8n-3})$ is different from $13$. Hence, $\ell_3\ell_4 \equiv 0\pmod{G/H}$.

	Let $\chi\in \widehat{G/H}$ be a non-principal character. It follows that
	\[ \chi(\ell_3 \ell_4) =0. \]
	As $\ell=\ell_3\ell_4$ is too complicated, we do not handle them directly. Instead, we take a prime number $p$ and consider $\chi(\ell_3\ell_4)$ modulo $p$. Let $p$ be primitive modulo $v=13$ which means $p\equiv 2,6,7,11 \pmod{v}$. By Lemma \ref{lm:ideal_decomp}, $(p)$ is a prime ideal in $\Z[\zeta_v]$.
	
	Let us replace $\chi(\overline{T}) \pmod{p}$ by $X$ in $\chi(\ell)\equiv 0\pmod{p}$ and let its coefficients be calculated modulo $p$. Now we obtain a polynomial $\bar{\ell}(X)$ in $\F_{p}[X]$. Then we get that $\bar{\ell}(\tau_1)=0$ for $\tau_1 \equiv \chi(\overline{T})\pmod{p}$.
	
	Since $\overline{T}^{(-1)}=\overline{T}$,  $\chi(\overline{T}^{(-1)})\equiv \chi(\overline{T}) \pmod{p}$ whence we only have to consider the roots of $\bar{\ell}$ in $\F_{p^{(v-1)/2}} = \F_{p^6}$. Note that $\tau_1$ is a root of $\bar{\ell}$. Recall that $\chi(\overline{T}^{(p)})\equiv \chi(\overline{T})^p \pmod{p}$. Hence
	\begin{equation}\label{eq:tau_2i_v=13}
	\chi(\overline{T}^{(2^i)})
	\equiv \tau_i= \left\{
	\begin{array}{ll}
	\tau_1^{p^i} \pmod{p},     & p\equiv 2,11 \pmod{13}, \\
	\tau_1^{p^{6-i}} \pmod{p}, & p\equiv 6,7 \pmod{13}.
	\end{array}
	\right.
	\end{equation}
	
	Let us consider the necessary conditions that $\tau_1$ must satisfy. First, by \eqref{eq:v=13_6equations},
	\begin{equation}\label{eq:v=13_6equations_tau}
		\tau_{i}^2 +\tau_{i+1} - 2n\equiv 0,
	\end{equation}
	for $i=0,1,\cdots, 5$.
	
	Second, we look at the coefficients of $a_{\bar{g}}$ by using the inversion formula \eqref{eq:inversion_formula}. Let $\beta$ be an element of order $v=13$ in $\F_{p^{v-1}}$. For $\bar{g} \in G/H$ with $\chi(\bar{g}) \equiv \beta \pmod{p}$,
	\begin{align}
	\nonumber	a_{\bar{g}} =& \frac{1}{13} \left((2n+1)  + \sum_{i=1}^{12}\chi(\overline{T}^{(i)})\chi(\bar{g}^{-i}) \right)\\
	\nonumber =&\frac{1}{13} \left((2n+1)  + \sum_{i=1}^{6} \chi(\overline{T}^{(i)})(\chi(\bar{g}^{i}) + \chi(\bar{g}^{-i})) \right)\\
	\label{eq:v=13_ag=...}\equiv &
	\dfrac{1}{13} ( 2n+1 + \tau_0(\beta+\beta^{-1}) + \tau_1(\beta^2+\beta^{-2}) + \tau_4(\beta^3+\beta^{-3})\\
	\nonumber &+ \tau_2(\beta^4+\beta^{-4}) + \tau_3(\beta^5+\beta^{-5}) +\tau_5(\beta^6+\beta^{-6})) \pmod{p}.
	\end{align}
	It is clear that $a_{\bar{g}} \pmod{p}$ corresponds to an element in $\F_p$. Moreover, since $|\overline{T}| = |T|=2n+1$, all $a_{\bar{g}}$'s also satisfy that
	\begin{equation}\label{eq:v=13_ag_sum}
	\sum_{\bar{g}\in G/H} a_{\bar{g}} \equiv 2n+1\pmod{p}.
	\end{equation}
	
	For each $p$, the parameter $n$ is not constant modulo $p$.  Depending on the value of $n$ modulo $p$, we have to divide our calculations into the $p$ cases. In each case, $\bar{l}$ is a concrete polynomial. First, we calculate all the roots of $\bar{l}$ in $\F_{p^{v-1}}$. Then for each root $\tau_1$, plug it into \eqref{eq:tau_2i_v=13} to get $\tau_i$ and  check whether \eqref{eq:v=13_6equations} holds for each $i$ and whether $a_{\bar{g}}$ derived from \eqref{eq:v=13_ag=...} satisfying $a_{\bar{g}}\pmod{p} \in \F_p$ and \eqref{eq:v=13_ag_sum}. By our MAGMA program, taking $p=11$, we can show that at least one of the necessary conditions is not satisfied, which means there is no $\overline{T}$ such that Conditions \ref{condition:main_n=2_1'} and \ref{condition:main_n=2_2'} hold.
\end{proof}
The approach used in the proof of Theorem \ref{th:r=2_v=13} can be further applied for larger $v$. According to our computation, the next $10$ possible values of a prime $v$ dividing $2n^2+2n+1$ are $17,~ 29,~ 37,~ 41,~ 53,~ 61,~ 73,~ 89,~ 97,~ 101$. Our MAGMA program shows that for $v=17$, we can choose $p=3$ and follows the steps of the calculations given in Theorem \ref{th:r=2_v=13} to prove that there is no $\overline{T}$ satisfying Conditions \ref{condition:main_n=2_1'} and \ref{condition:main_n=2_2'}. As the proof is very similar, we omit it here and present the results directly. It should be pointed out that we do not need the assumption that $8n-3\neq 17k^2$ anymore as in Theorems \ref{th:r=2_v=5} and \ref{th:r=2_v=13}, because $17 \not\equiv 5 \pmod{8}$ which means the discriminant of $\Q(\sqrt{8n-3})$ is always different from $17$.
\begin{theorem}\label{th:r=2_v=17}
	Assume that $17\mid 2n^2+2n+1$ and $8n+1$ is not a square. Then there is no subset $\overline{T} \subseteq C_{17}$ of size $2n+1$ satisfying Conditions \ref{condition:main_n=2_1'} and \ref{condition:main_n=2_2'}.
\end{theorem}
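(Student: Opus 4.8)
The plan is to follow the template of the proof of Theorem~\ref{th:r=2_v=13} almost verbatim, the only changes being $v=17$, the auxiliary prime $p=3$, and the index bookkeeping forced by the arithmetic of $2$ and $3$ modulo $17$. First I would argue by contradiction: suppose $\overline{T}\subseteq C_{17}$ has size $2n+1$ and satisfies Conditions~\ref{condition:main_n=2_1'} and~\ref{condition:main_n=2_2'}. Applying $(\cdot)^{(2)}$ repeatedly to Condition~\ref{condition:main_n=2_2'} yields
\[ f_i = \overline{T}^{(2^i)}\overline{T}^{(2^i)} + \overline{T}^{(2^{i+1})} - 2n \equiv 0 \pmod{G/H},\qquad i=0,1,\dots,7, \]
where $i$ runs over $0,\dots,7$ because $2$ has multiplicative order $8$ modulo $17$, so $\overline{T}^{(2^8)}=\overline{T}$. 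Treating the $f_i$ as polynomials in the formal unknowns $\overline{T}^{(2^i)}$, I would take successive resultants to eliminate $\overline{T}^{(2^1)},\dots,\overline{T}^{(2^7)}$ and obtain a single univariate congruence $h(\overline{T})\equiv 0\pmod{G/H}$; this is mechanical in MAGMA \cite{Magma}.

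Next I would factor $h$ modulo $G/H$ into irreducibles. Exactly as in the case $v=13$, the factors $\overline{T}^2-\overline{T}-2n+1$ and $\overline{T}^2+\overline{T}-2n$ occur but cannot be congruent to $0$ modulo $G/H$: the second is excluded by Lemma~\ref{lm:SS_r=2} together with the hypothesis that $8n+1$ is not a square, and the first is excluded by the cyclotomic-field argument of Theorem~\ref{th:r=2_v=5}. Here, crucially, no hypothesis of the form $8n-3\neq 17k^2$ is needed, because $17\not\equiv 5\pmod 8$ forces the discriminant of $\K=\Q(\sqrt{8n-3})$ to differ from $17$ automatically, so the condition on the containing cyclotomic field still goes through. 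Hence the product $\ell$ of the remaining irreducible factors of $h$ satisfies $\ell\equiv 0\pmod{G/H}$.

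Finally I would take a non-principal character $\chi\in\widehat{G/H}$, so $\chi(\ell)=0$, and reduce modulo $p=3$. Since $3$ is primitive modulo $17$, $(3)$ is a prime ideal of $\Z[\zeta_{17}]$ by Lemma~\ref{lm:ideal_decomp}, and since $\overline{T}=\overline{T}^{(-1)}$ with $-1$ a quadratic residue modulo $17$, the reduction $\tau_1\equiv\chi(\overline{T})\pmod 3$ lies in $\F_{3^8}$. Because $2\equiv 3^{-2}\pmod{17}$, Frobenius gives $\tau_i\equiv\chi(\overline{T}^{(2^i)})\equiv\tau_1^{3^{-2i}}\pmod 3$, so each $\tau_i$ is an explicit power of $\tau_1$. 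A surviving $\tau_1$ must then satisfy: $\bar{\ell}(\tau_1)=0$ for the reduction $\bar{\ell}\in\F_3[X]$ of $\chi(\ell)$; the relations $\tau_i^2+\tau_{i+1}-2n\equiv 0$ for $i=0,\dots,7$; and, via the inversion formula~\eqref{eq:inversion_formula}, the requirement that every coefficient $a_{\bar g}$ of $\overline{T}$ reduce into $\F_3$ and that $\sum_{\bar g}a_{\bar g}\equiv 2n+1\pmod 3$. Splitting into the three residue classes of $n$ modulo $3$ and enumerating the roots of $\bar{\ell}$ in $\F_{3^8}$, a MAGMA check shows that in every case at least one of these conditions fails, giving the desired contradiction.

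The step I expect to be the main obstacle is the feasibility of the resultant elimination and the factorization of $h$ modulo $G/H$: with $v=17$ the eliminant $h$ and its nontrivial factor $\ell$ have substantially higher degree (and many more terms) than in the $v=13$ case, so one must verify that MAGMA can carry out the symbolic computation with $n$ kept as a parameter and that the finite-field search over $\F_{3^8}$ stays manageable after specializing $n$ modulo $3$. The conceptual content — ruling out the two quadratic factors and running the character-plus-inversion-formula argument — is identical to Theorem~\ref{th:r=2_v=13}, the only genuinely new observation being that the $8n-3$ hypothesis disappears because $17\not\equiv 5\pmod 8$.
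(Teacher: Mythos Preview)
Your proposal is correct and matches the paper's own approach exactly: the paper states that the proof is carried out by following the steps of Theorem~\ref{th:r=2_v=13} with $v=17$ and the auxiliary prime $p=3$, and it makes the same observation that the hypothesis $8n-3\neq 17k^2$ is unnecessary because $17\not\equiv 5\pmod 8$. The only minor inefficiency is that, since $-1\equiv 2^4\pmod{17}$ and $\overline{T}^{(-1)}=\overline{T}$, one already has $\overline{T}^{(2^4)}=\overline{T}$, so four equations $f_0,\dots,f_3$ suffice rather than eight; your extra four equations are harmless duplicates and do not affect the argument.
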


Similar to Remark \ref{remark:n=5_infinite}, it is not difficult to show that Theorems \ref{th:r=2_v=13} and \ref{th:r=2_v=17} both offer us nonexistence results of $\overline{T}$ satisfying Conditions \ref{condition:main_n=2_1'} and \ref{condition:main_n=2_2'} for infinitely many $n$.

For the next possible value of $v$ which is $29$, our computer is not powerful enough to provide us the univariate polynomial with the variable $\overline{T}$ by computing the resultants of $28$ pairs of polynomials.

By Corollary \ref{coro1}, Theorems \ref{th:r=2_v=5}, \ref{th:r=2_v=13} and \ref{th:r=2_v=17}, we have the following results.
\begin{corollary}\label{coro:r=2_v=5,13,17}
	Suppose that $8n+1$ is not a square in $\Z$. Assume that one collection of the following conditions holds
	\begin{enumerate}[label=(\arabic*)]
		\item \label{item:5} $5 \mid 2n^2+2n+1$, $8n-3\neq 5k^2$ for any $k\in \Z$;
		\item \label{item:13} $13 \mid 2n^2+2n+1$, $8n-3\neq 13k^2$ for any $k\in \Z$;
		\item \label{item:17} $17 \mid 2n^2+2n+1$.
	\end{enumerate}
	Then there are no linear perfect Lee codes of radius $2$ for dimension $n$.
\end{corollary}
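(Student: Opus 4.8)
The plan is to argue by contradiction, reducing the claim to the three quotient-group statements already proved. Suppose a linear $PL(n,2)$-code exists for the given dimension $n$. By Lemma~\ref{lemma1} there are a finite abelian group $G$ with $|G|=2n^2+2n+1$ and a subset $T\subseteq G$ of size $2n+1$, viewed as an element of $\Z[G]$, satisfying Conditions~\ref{condition:main_n=2_0}, \ref{condition:main_n=2_1} and \ref{condition:main_n=2_2}.

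Let $v\in\{5,13,17\}$ be the prime appearing in whichever of \ref{item:5}, \ref{item:13} or \ref{item:17} is assumed; by that hypothesis $v\mid|G|$. Since $G$ is abelian, it has a subgroup $H$ of index $v$, and since $v$ is prime the quotient $G/H$ is the cyclic group $C_v$. Pushing Conditions~\ref{condition:main_n=2_1} and \ref{condition:main_n=2_2} through the canonical projection $\rho\colon G\to G/H$, using $\overline{S^{(j)}}=\overline{S}^{(j)}$ as recorded just before Lemma~\ref{lm:SS_r=2}, produces an element $\overline{T}=\rho(T)\in\Z[C_v]$ with nonnegative coefficients summing to $2n+1$ that satisfies Conditions~\ref{condition:main_n=2_1'} and \ref{condition:main_n=2_2'}. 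Now one of Theorems~\ref{th:r=2_v=5}, \ref{th:r=2_v=13}, \ref{th:r=2_v=17} applies: under~\ref{item:5}, together with the standing hypothesis that $8n+1$ is not a square, the hypotheses of Theorem~\ref{th:r=2_v=5} are met; under~\ref{item:13}, those of Theorem~\ref{th:r=2_v=13}; and under~\ref{item:17}, those of Theorem~\ref{th:r=2_v=17}, which needs only that $8n+1$ is a non-square and imposes no condition on $8n-3$. In each case the cited theorem says that no such $\overline{T}$ exists, a contradiction. Hence there is no linear $PL(n,2)$-code for dimension $n$.

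There is no real obstacle here: the substance is entirely contained in Theorems~\ref{th:r=2_v=5}, \ref{th:r=2_v=13} and \ref{th:r=2_v=17}, and the corollary is just the observation that Lemma~\ref{lemma1} together with projection onto a cyclic quotient of prime order $v\mid|G|$ forces precisely the configuration those theorems forbid. The only points worth spelling out are that such a quotient $C_v$ exists (abelian group, $v$ prime dividing the order) and that $\rho$ transports Conditions~\ref{condition:main_n=2_1} and \ref{condition:main_n=2_2} to Conditions~\ref{condition:main_n=2_1'} and \ref{condition:main_n=2_2'}; both are immediate. One should also verify that the global hypothesis that $8n+1$ is not a square, together with the per-case conditions on $8n-3$, matches exactly the hypotheses of the three cited theorems — which it does.
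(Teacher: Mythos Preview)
Your argument is correct and is exactly the approach the paper takes: the paper's own proof is the one-line remark ``By Corollary~\ref{coro1}, Theorems~\ref{th:r=2_v=5}, \ref{th:r=2_v=13} and \ref{th:r=2_v=17}'', and you have simply spelled out the routine reduction via Lemma~\ref{lemma1} and the projection $G\to G/H\cong C_v$ that underlies it. If anything, your version is slightly more careful in noting that $\overline{T}$ is a multiset with coefficients summing to $2n+1$ rather than a literal subset.
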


In Table \ref{table:r=2_v=5,13,17}, we list the cardinality of $n$ for which the existence of linear $PL(n,2)$-codes can be excluded by Corollary \ref{coro:r=2_v=5,13,17}.
\begin{table}[h!]
	\centering
	\begin{tabular}{|l|c|c|c|c|c|}
		\hline
		$N$ 		& $10$ & $10^2$ & $10^3$ & $10^4$ & $10^5$ \\
		\hline
		$\#n \leq N$ excluded by Corollary \ref{coro:r=2_v=5,13,17} \ref{item:5} & $1$ & $27$ & $356$ & $3857$ & $39537$ \\
		\hline
		$\#n \leq N$ excluded by Corollary \ref{coro:r=2_v=5,13,17} \ref{item:13}   & $0$ & $8$ & $129$ & $1458$ & $15126$ \\
		\hline
		$\#n \leq N$ excluded by Corollary \ref{coro:r=2_v=5,13,17} \ref{item:17}   & $0$ & $8$ & $108$ & $1142$ & $11659$ \\
		\hline
		$\#n \leq N$ excluded by Corollary \ref{coro:r=2_v=5,13,17} & $1$ & $38$ & $499$ & $5332$ & $54606$ \\
		\hline
	\end{tabular}
	\caption{The number of $n$ to which Corollary \ref{coro:r=2_v=5,13,17} can be applied.}\label{table:r=2_v=5,13,17}
\end{table}

\subsection{More necessary conditions}\label{sub:approach2}
As we have discussed after Theorem \ref{th:r=2_v=17}, it appears elusive to follow the same approach to prove the nonexistence results for large $n$. In the next result, we adjust our strategy. Instead of finding a complicated polynomial in term of $\overline{T}$, we consider the recursive relation modulo $p$ where $p$ divides $2n$ and try to derive necessary conditions from them.

\begin{theorem}\label{th:union_criteria}
	Let $n$ be a positive integer and $p$ be a prime divisor of $2n$. Let $G$ be an abelian group of order $2n^2+2n+1$. Suppose that $H$ is one of its subgroup of index $v$ where $v$ is prime.
	Let $f$ denote the order of $p$ modulo $v$, $l=\min \{i: p^i\equiv \pm 1 \pmod{v}\}$ and $d=(v-1)/f$. Define $m=\frac{2n^2+2n+1}{v}$, $m_1 := \min\{i : i\in \Z_{\geq 0}, i\equiv m\pmod{p} \}$, $m_2 := \min\{i : i\in \Z_{\geq 0}, i\equiv 2m\pmod{p} \}$ and
	\[\lambda = \max\{r: r\mid (p^l-1), r \mid (2^i-p^j) \text{ for } 2^i\equiv p^j \pmod{v} \text{ and } (i,j)\ne(0,0) \}.\]
	Assume that
	\begin{itemize}
		\item $2n+1$ is smaller than $m_1v$ and $m_2v$,
		\item $\sigma_{2}$ and $\sigma_p$ generates the Galois group $\Gal(\Q(\zeta_v)/\Q)$ where $\zeta_v$ is a primitive $v$-th root of unity.
	\end{itemize}
	If there is $T=\sum_{g\in G} a_g g\in \Z[G]$ with $a_g\in \{0,1\}$ satisfying Conditions \ref{condition:main_n=2_1} and \ref{condition:main_n=2_2}, then the following necessary conditions hold:
	\begin{enumerate}[label=(\roman*)]
		\item\label{item:nece_1} $\lambda\neq 1$ or $v$.
		\item\label{item:nece_2} There is $x\in \{z\in\F_{p^l}: z^{\lambda}=1\}$ such that
		\begin{equation}\label{eq:xy_cond_sum}
		m\sum_{y^{v}=1}\theta(x,y) \equiv 0 \pmod{p},	
		\end{equation}
		and
		\begin{equation}\label{eq:xy_cond_ag_bound}
		m(1-\theta(x,y)) \pmod{p} \in \{0,\dots, m\} \text{ for every $y$ satisfying }y^{v}=1,
		\end{equation}
		where
		\[ \theta(x,y)= \left\{
		\begin{array}{ll}
		\sum_{i=0}^{v-2} (xy)^{2^i}, &  \langle\sigma_{2}\rangle=\Gal(\Q(\zeta_v)/\Q),\\
		\sum_{i=0}^{d-1}\Tr_{\F_{p^f}/\F_p}((xy)^{2^i}), &  \langle\sigma_{2}, \sigma_p \rangle=\Gal(\Q(\zeta_v)/\Q).
		\end{array} \right.
		\]
		\item\label{item:nece_3} In particular, when $G=H$, Condition \ref{item:nece_2} becomes the existence of
		$x\in \{z\in\F_{p^l}: z^{\lambda}=1\}$ such that
		\begin{align}
		\label{eq: xy_cond_1} |\{y\in \F_{p^f}: \theta(x,y)=1, y^{2n^2+2n+1}=1\}| &= 2n^2,\\
		\label{eq: xy_cond_2} |\{y\in \F_{p^f}: \theta(x,y)=0, y^{2n^2+2n+1}=1\}| &= 2n+1.
		\end{align}
	\end{enumerate}
\end{theorem}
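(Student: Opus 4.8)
The plan is to follow the recursive-character argument sketched before Theorem~\ref{th:union_criteria} and turn it into concrete arithmetic statements modulo~$p$. Suppose $T=\sum_g a_g g$ with $a_g\in\{0,1\}$ satisfies Conditions \ref{condition:main_n=2_1} and \ref{condition:main_n=2_2}. Passing to the quotient $G/H$ we retain Conditions \ref{condition:main_n=2_1'} and \ref{condition:main_n=2_2'} for $\overline{T}$. Fix a nonprincipal character $\chi\in\widehat{G/H}$ and write $\alpha=\chi(\overline{T})$. Since $p\mid 2n$, reducing the equation $\overline{T}^2=2mG/H-\overline{T}^{(2)}+2n$ modulo the prime ideal above $p$ kills the $2n$ term and also controls the $2mG/H$ term: applying $\chi$ gives $\chi(\overline{T})^2\equiv-\chi(\overline{T}^{(2)})\pmod p$ for nonprincipal $\chi$ (and $\equiv (2n+1)^2\equiv 1$ for $\chi_0$). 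Iterating $(\cdot)^{(2)}$ yields $\chi(\overline{T}^{(2^i)})\equiv-\alpha^{2^i}$, and composing with $(\cdot)^{(p^j)}$ (which acts as Frobenius on $\chi$-values mod $p$) gives $\chi(\overline{T}^{(2^ip^j)})\equiv-\alpha^{2^ip^j}\pmod p$. The hypothesis that $\sigma_2$ and $\sigma_p$ generate $\Gal(\Q(\zeta_v)/\Q)$ is exactly what makes the set $\{2^ip^j\bmod v\}$ equal to all of $(\Z/v\Z)^\times$, so these relations pin down every nonprincipal character value of $\overline{T}$ in terms of the single quantity $\alpha\bmod p$.

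\textbf{Deriving the constraint on $\lambda$.} The next step is to observe that $\alpha$ cannot be arbitrary: whenever $2^i\equiv p^j\pmod v$ with $(i,j)\ne(0,0)$, the two expressions $-\alpha^{2^i}$ and $-\alpha^{p^j}$ for the same character value must agree, forcing $\alpha^{2^i-p^j}=1$. Also the reality condition $\overline{T}=\overline{T}^{(-1)}$ (or rather its consequence together with $p^l\equiv\pm1$) forces $\alpha\in\F_{p^l}$ and then $\alpha^{p^l-1}=1$. Hence $\alpha^{\lambda}=1$ with $\lambda$ as defined in the statement. Now if $\lambda=1$ then $\alpha=1$ for every nonprincipal $\chi$, so $\overline{T}$ has all character values congruent to a constant, forcing (after lifting) $T\equiv a+bG\pmod p$ of the form handled in Lemma~\ref{lm:SS_r=2}; combined with the earlier-established sphere structure this contradicts $8n+1$ being a nonsquare — actually the cleaner route is that a constant $\alpha$ makes all $a_{\bar g}$ equal mod $p$, contradicting $\sum a_{\bar g}=2n+1$ with the size bounds $2n+1<m_1v,m_2v$. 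The case $\lambda=v$ is ruled out because $\lambda\mid p^l-1$ and $v\nmid p^l-1$ (as $p$ has order $f>1$, or $l<f$). This gives \ref{item:nece_1}.

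\textbf{The inversion-formula constraints.} For \ref{item:nece_2}, plug the determined character values into the inversion formula, Lemma~\ref{inversion_formula}, over $G/H$: for $\bar g$ with $\chi(\bar g)\equiv y$ (a $v$-th root of unity in $\F_{p^f}$) one gets $a_{\bar g}\equiv\frac{1}{v}\big(2n+1-\sum_{i}\alpha^{2^i}y^{2^i}\big)\pmod p$, and after absorbing $2n+1\equiv1$ and writing $x=\alpha$, rescaling by the constant $m\equiv v^{-1}(2n^2+2n+1)/\text{(stuff)}$ — more precisely multiplying through and using $mv\equiv 2n^2+2n+1\equiv1\pmod p$ — this becomes $m(1-\theta(x,y))\pmod p$, where $\theta(x,y)=\sum_{i=0}^{v-2}(xy)^{2^i}$ in the cyclic case and the trace version $\sum_{i=0}^{d-1}\Tr_{\F_{p^f}/\F_p}((xy)^{2^i})$ when one needs $\sigma_p$ as well (here the Galois orbit of $xy$ under $\langle\sigma_2,\sigma_p\rangle$ is being summed, and the $\sigma_p$-part collapses into a trace). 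Since each genuine coefficient $a_{\bar g}$ lies in $\{0,\dots,m\}$ we get \eqref{eq:xy_cond_ag_bound}, and summing over all $\bar g$ in the fibre recovers $\sum_{y^v=1}$-version of $|T|=2n+1$, giving \eqref{eq:xy_cond_sum} after clearing the overall factor. For \ref{item:nece_3}, when $G=H$ the quotient is trivial, $v=2n^2+2n+1$ itself, $m=1$, and the conditions $a_g\in\{0,1\}$ say exactly that $\theta(x,y)=1$ on a set of size $|G|-|T|=2n^2$ and $\theta(x,y)=0$ on a set of size $|T|=2n+1$, which is \eqref{eq: xy_cond_1}–\eqref{eq: xy_cond_2}.

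\textbf{Main obstacle.} The routine part is the character/Frobenius bookkeeping; the delicate point is justifying the passage from ``$2mG/H$ disappears mod $p$'' uniformly — one must check that $m\equiv 0$ is \emph{not} assumed, rather $\chi(G/H)=0$ for nonprincipal $\chi$ kills that term exactly, while for $\chi_0$ it contributes and must be tracked separately (this is why the bounds involve $m_1,m_2$, the reductions of $m$ and $2m$ mod $p$, rather than $m$ itself). The second subtle point is the split into the two formulas for $\theta$: one has to verify that $\langle\sigma_2\rangle$ being all of the Galois group versus needing $\sigma_p$ too corresponds precisely to whether the exponent set $\{2^i\}$ already exhausts $(\Z/v\Z)^\times$, and that in the latter case the extra Frobenius symmetry forces the character-value pattern to be Frobenius-stable, which is what legitimizes replacing a sum over $\{2^ip^j\}$ by a sum of traces over $\{2^i\}$. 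Getting the normalizing constants ($\frac1v$ vs.\ $m$, the role of $mv\equiv1\pmod p$) exactly right is where care is needed, but no deep new idea is required.
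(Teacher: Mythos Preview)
Your overall architecture---recursive squaring mod $p$, Frobenius, then the inversion formula---matches the paper's, and your treatment of parts \ref{item:nece_2} and \ref{item:nece_3} is essentially correct (up to the sign convention: with $\alpha=\chi(\overline T)$ the formula $\chi(\overline T^{(2^i)})\equiv-\alpha^{2^i}$ fails at $i=0$; the paper sets $-\alpha=\chi(\overline T)$ precisely to make it uniform). The genuine gap is in part \ref{item:nece_1}.

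First, you omit the possibility $\alpha\equiv 0$. The relations $\alpha^{2^i}\equiv\alpha^{p^j}$ only force $\alpha^{\lambda}=1$ \emph{when $\alpha\neq0$}; the case $\alpha=0$ must be excluded separately. In the paper this is exactly what the bound $2n+1<m_1v$ is for: if $\alpha=0$ then every $a_{\bar g}\equiv m\pmod p$, so each $a_{\bar g}\ge m_1$ and $\sum a_{\bar g}\ge m_1v>2n+1$.

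Second, and more seriously, your dismissal of $\lambda=v$ is wrong. You claim $v\nmid p^l-1$, but this fails whenever $f$ is odd: then no power $p^i$ with $i<f$ hits $-1$, so $l=f$ and $p^l\equiv1\pmod v$, giving $v\mid p^l-1$. (The paper even notes that when $l=f$ one has $\lambda\ge v$.) The correct argument, which you do not supply, is that when $\lambda\in\{1,v\}$ every $\lambda$-th root of unity in the residue field is already the reduction of some power of $\zeta_v$; hence a nonzero $\alpha$ satisfies $\alpha\beta$ a $v$-th root of unity for each $\beta$, and the inversion formula collapses to $a_{\bar g}\equiv 2m\pmod p$ for all $\bar g$. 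This forces $\sum a_{\bar g}\ge m_2 v>2n+1$, which is the role of the \emph{second} hypothesis $2n+1<m_2v$. Your reference to Lemma~\ref{lm:SS_r=2} and ``$8n+1$ nonsquare'' is off-target: that lemma is not invoked here, and $8n+1$ nonsquare is not among the hypotheses of this theorem. The two size bounds $m_1v$ and $m_2v$ are doing all the work in part \ref{item:nece_1}, handling $\alpha=0$ and $\alpha$ a $v$-th root of unity respectively; your write-up obscures this and leaves the $\lambda=v$ case unproved.
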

\begin{proof}
	We look at the image $\overline{T}$ of $T$ in $G/H$. By Condition \ref{condition:main_n=2_2'},
	\[\overline{T}^2 \equiv -\overline{T}^{(2)} + 2n \pmod{G/H}.  \]
	It follows that
	\[ (\overline{T}^{(2)})^2 \equiv -\overline{T}^{(4)} + 2n \pmod{G/H},\]
	which means
	\[ \overline{T}^{(2^2)} \equiv -\overline{T}^{2^2} + 2n(2\overline{T}^2-2n+1) \pmod{G/H}. \]
	After another $i-2$ steps, we get
	\begin{equation}\label{eq:T^(2^i)=-T^2^i}
	\overline{T}^{(2^i)} \equiv -\overline{T}^{2^i} + 2nL \pmod{G/H}	
	\end{equation}
	for some $L\in \Z[G/H]$.
	
	For a given non-principal character $\chi\in \widehat{G/H}$, suppose that $-\alpha = \chi(\overline{T})$. By \eqref{eq:T^(2^i)=-T^2^i},
	\begin{equation}\label{eq:T^2^i mod P}
	\chi(\overline{T}^{(2^i)})\equiv -\alpha^{2^i} \pmod{p},
	\end{equation}
	for $i=0,1,\cdots$.
	
	Since $a_{\bar{g}}^p \equiv a_{\bar{g}} \pmod{p}$,
	\begin{equation*}
	\chi(\overline{T}^{(2^ip)}) = \sum_{\bar{g}\in G/H}a_{\bar{g}} \chi(\bar{g}^{2^ip}) \equiv \left(\sum_{\bar{g}\in G/H}a_{\bar{g}} \chi(\bar{g}^{2^i})\right)^p \equiv -\chi(\overline{T}^{(2^i)}) ^p \pmod{p},
	\end{equation*}
	which with \eqref{eq:T^2^i mod P} implies
	\begin{equation}\label{eq:T_2^ip^j}
	\chi(\overline{T}^{(2^ip^j)}) \equiv -\alpha^{2^ip^j} \pmod{p},
	\end{equation}
	for $i=0,1,\cdots$ and $j=0,1,\cdots$.
	
	As $\varphi(v)=v-1=fd$ and $\langle \sigma_{2}, \sigma_p \rangle=\Gal(\Q(\zeta_v)/\Q)$,  we have
	\begin{equation}\label{eq:all_powers_exp(G)}
	\{2^jp^k \pmod{v} : 0\le j\le d-1, 0\le k\le f-1\} = \{1,\dots, v-1 \}.
	\end{equation}
	In particular, if $\langle \sigma_2 \rangle = \Gal(\Q(\zeta_v)/\Q)$, then
	\begin{equation}\label{eq:all_powers_by_2}
	\{ 2^i \pmod{v} : 0\le i \le v-2 \} = \{1,\dots, v-1 \}.
	\end{equation}
	
	Given an arbitrary non-principal $\chi_1\in \widehat{G/H}$ which obviously generates $\widehat{G/H}$, by \eqref{eq:all_powers_exp(G)} it is clear that
	\[ \{\chi_1((\cdot)^{2^jp^k}) : 0\le j\le d-1, 0\le k\le f-1\} = \widehat{G/H}\setminus \{\chi_0\}. \]
	Let us denote $\chi_1((\cdot)^{2^jp^k})$ by $\chi_{j,k}$. Assume that $\chi_{0,0}(\overline{T})\equiv -\alpha \pmod{p}$. By \eqref{eq:T_2^ip^j},
	\[ \chi_{j,k}(\overline{T})\equiv -\alpha^{2^jp^k} \pmod{p}.\]
	
	Let $\chi_1(\bar{g}^{-1}) = \beta$ which is a power of $\zeta_v$. It is clear that $\chi_{j,k}(\bar{g}^{-1})=\beta^{2^j p^k}$. By the inversion formula \eqref{eq:inversion_formula},
	\begin{align*}
	a_{\bar{g}} &= \frac{1}{|G/H|}\sum_{\chi\in \widehat{G/H}} \chi(\overline{T}) \chi(\bar{g}^{-1})\\
	&= \frac{|H|}{|G|}\left(\sum_{j,k} \chi_{j,k}(\overline{T}) \chi_{j,k}(\bar{g}^{-1})+\chi_0(\overline{T})\chi_0({\bar{g}^{-1}})\right)\\
	&\equiv m\left(\sum_{j,k} -(\alpha\beta)^{2^jp^k} + (2n+1)  \right)\pmod{p},
	\end{align*}
	whence
	\begin{equation}\label{eq:a_g_criteria1}
	a_{\bar{g}}\equiv m\left(\sum_{j,k} -(\alpha\beta)^{2^jp^k} + 1  \right)\pmod{p}.
	\end{equation}
	
	In particular, if $\langle \sigma_2 \rangle = \Gal(\Q(\zeta_v)/\Q)$, then by \eqref{eq:all_powers_by_2}
	\[ \{\chi_1((\cdot)^{2^i}) : 0\le i\le v-2\} = \widehat{G/H}\setminus \{\chi_0\}. \]
	Let us denote $\chi_1((\cdot)^{2^i})$ by $\chi_{2^i}$. Assume that $\chi_{2^0}(\overline{T})\equiv -\alpha \pmod{p}$. By \eqref{eq:T_2^ip^j},
	\[ \chi_{2^i}(\overline{T})\equiv -\alpha^{2^i} \pmod{p}.\]
	Under the assumption that $\chi_1(\bar{g}^{-1}) = \beta$ which is a power of $\zeta_v$. It is clear that $\chi_{2^i}(\bar{g}^{-1})=\beta^{2^i}$. By the inversion formula \eqref{eq:inversion_formula},
	\begin{equation}\label{eq:a_g_criteria1_by_2}
	a_{\bar{g}}\equiv m\left(\sum_{i=0}^{v-2} -(\alpha\beta)^{2^i} + 1  \right) \pmod{p}.
	\end{equation}
	
	Next let us derive some further restrictions on the value of $\alpha$ which equals $-\chi(\overline{T})$. Let $i$ and $j$ be integers such that
	\[ 2^i\equiv p^j \pmod{v},\]
	whence $\chi(\overline{T}^{(2^i)}) = \chi(\overline{T}^{(p^j)})$.
	By \eqref{eq:T_2^ip^j},
	\begin{equation}\label{eq:2^i=p^j}
	-\alpha^{2^i}\equiv \chi(\overline{T}^{(2^i)}) = \chi(\overline{T}^{(p^j)}) \equiv -\alpha^{p^j} \pmod{p}.
	\end{equation}
	
	For a prime $p$ diving $2n$, by Lemma \ref{lm:ideal_decomp}, $(p)=P_1\cdots P_d$ in $\Z[\zeta_v]$. By \eqref{eq:2^i=p^j}, we get
	\begin{equation}\label{eq:x=-x^{2^h}}
	\alpha^{2^i}\equiv\alpha^{p^j} \pmod{P_k},
	\end{equation}
	for $k=1,2,\cdots, d$. This is actually equivalent to an equation
	$x^{2^i}=x^{p^j}$
	over the residue field $\Z[\zeta_v]/P_k\cong \F_{p^f}$ where $x=\alpha+P_k$.
	
	When $f$ is even, $p^{f/2} \equiv -1 \pmod{v}$ which means $l=f/2$. Together with $\chi(\overline{T})=\chi(\overline{T}^{(-1)})$, we derive $\alpha\equiv\alpha^{p^{f/2}} \pmod{P_k}$, i.e.\ $x=\alpha+P_k$ is in the subfield $\F_{p^{f/2}}$ of $\F_{p^f}$. Of course, when $f$ is odd, $l$ still equals $f$. Thus
	\begin{equation}\label{eq:alpha=alpha^p^l}
	\alpha^{p^l}\equiv\alpha \pmod{P_k},
	\end{equation}
	for every $k$.

	Recall that we have defined
	$$\lambda = \max\{r: r\mid (p^l-1), r \mid (2^i-p^j) \text{ for } 2^i\equiv p^j \pmod{v}  \}.$$
	From \eqref{eq:x=-x^{2^h}} and \eqref{eq:alpha=alpha^p^l}, it follows that $\alpha$ must be zero or satisfy
	\begin{equation}\label{eq:alpha_mod_Pk}
	\alpha^\lambda\equiv 1\pmod{P_k}	
	\end{equation}
	for $k=1,2,\cdots, d$.	If $l=f$, then it is clear that $\lambda\geq v$; otherwise, it is possible that $\lambda<v$.
	
	\textbf{Case \RN{1}:} First let us look at the case that $\lambda \mid v$, i.e.\ $\lambda=1$ or $v$. Let $\alpha_i=\zeta_v^{\frac{iv}{\lambda}}$ for $i=0,\cdots, \lambda-1$. It is clear that $\chi(\overline{T})\equiv-\alpha_j \pmod{P_k}$ satisfies \eqref{eq:x=-x^{2^h}} for $j=0,\cdots, \lambda-1$. As there are exactly $\lambda+1$ solutions of \eqref{eq:x=-x^{2^h}} over $\F_{p^{f}}$, $\chi(\overline{T})$ must be congruent to one of the elements in $\{0, -\alpha_0, \cdots, -\alpha_{\lambda-1} \}$ modulo $P_k$.
	
	Next we show that the value of $\chi(\overline{T})$ modulo $P_k$ does not depend on $k$ under the condition that $\lambda \mid v$.	Suppose that $\chi(\overline{T})\equiv-\alpha \pmod{P_1}$ where $\alpha$ is a power of $\zeta_v$. Let $h$ be the order of $2$ modulo $v$. Then
	$-\alpha\equiv-\alpha^{2^h}\pmod{P_1}$. By \eqref{eq:T^2^i mod P},
	\begin{equation}\label{eq:T^2^i mod P_1}
	\chi(\overline{T}^{(2^i)}) \equiv -\alpha^{2^i} \pmod{P_1},
	\end{equation}
	for $i=0,1,\dots$. Applying the field automorphism $\sigma_2^{h-i}$ on it, we obtain
	\[ \chi(\overline{T}) \equiv -\alpha^{2^h}\equiv -\alpha \pmod{\sigma_2^{h-i}(P_1)}.\]
	By the assumption that $\sigma_2$ acts transitively on $P_k$'s,
	\[ \chi(\overline{T}) \equiv -\alpha \pmod{P_k},\]
	for $k=1,2,\dots, d$. Thus $\chi(\overline{T}) \equiv -\alpha \pmod{p}$.
	
	
	Recall that all the possible values of $\alpha$ are $0$ and all the $\lambda$-th roots of unity $\alpha_i$ in $\Z[\zeta_v]$.
	
	If $\alpha=0$, then \eqref{eq:a_g_criteria1} shows us that $a_{\bar{g}}\equiv m \pmod{p}$ for all ${\bar{g}}$. As every $a_{\bar{g}}\ge 0$, $a_{\bar{g}}\geq m_1$. By assumption, $vm_1>2n+1$, it follows that $\sum_{g\in G}a_{\bar{g}}>2n+1$ which contradicts $|T|=2n+1$.
	
	If $\alpha$ is nonzero, then it is a power of $\zeta_v$ and $\alpha\beta$ is again a power of $\zeta_v$. By \eqref{eq:a_g_criteria1} and \eqref{eq:all_powers_exp(G)},
	\[a_{\bar{g}} \equiv m\left(-\sum_{i=1}^{v-1} (\alpha\beta)^{i}+1\right)\equiv 2m\equiv m_2\pmod{p},\]
	for all $\bar{g}$. It follows that $a_{\bar{g}} \ge m_2$ whence $2n+1 \ge v m_2$. This contradicts the assumption that $2n+1<vm_2$.
	
	\textbf{Case \RN{2}:} If $\lambda\neq 1$ or $v$ which implies that $\alpha=-\chi(\overline{T})$ can take a value that is neither zero nor a power of $\zeta_v$, we consider \eqref{eq:T^2^i mod P} modulo $P$ where $P$ is one of the prime ideals containing $(p)$.
	
	By \eqref{eq:a_g_criteria1},
	\begin{equation*}
	a_{\bar{g}}\equiv m\left(\sum_{j,k} -(\alpha\beta)^{2^jp^k} + 1\right)\pmod{P}.
	\end{equation*}
	
	Now it can be regarded as an equation over the finite field $\Z[\zeta_v]/P \cong \F_{p^f}$. Let us use $x$ and $y\in \F_{p^f}$ to denote $\alpha\pmod{P}$ and $\beta\pmod{P}$. Then we have
	\begin{equation*}
	a_{\bar{g}}\equiv m\left(1
	-\sum_{j=0}^{d-1} \Tr_{\F_{p^f}/\F_p}((xy)^{2^j})\right) \pmod{p}.
	\end{equation*}
	
	In the special case that $\langle \sigma_2 \rangle = \Gal(\Q(\zeta_v)/\Q)$. By \eqref{eq:a_g_criteria1_by_2},
	\begin{align*}
	a_{\bar{g}}&\equiv \left(\sum_{i=0}^{v-2} -(\alpha\beta)^{2^i} + 1\right)  \pmod{P}
	\end{align*}
	which can be viewed as an equation in the finite field $\F_{p^f}$
	\[ a_{\bar{g}}\equiv m\left( 1-\sum_{i=0}^{v-2} (xy)^{2^i}\right) \pmod{p}.\]
	
	If $x=0$, then $a_{\bar{g}}\equiv m\pmod{p}$ for every $\bar{g}$, whence $2n+1=\sum a_{\bar{g}} \geq vm_1$. It contradicts our assumption on the value of $vm_1$. Thus $x$ must be nonzero. By \eqref{eq:alpha_mod_Pk}, $x^\lambda=1$.
	
	As  $a_{\bar{g}}$ is an integer between $0$ and $m$, we obtain \eqref{eq:xy_cond_ag_bound}.	By calculation, $mv\equiv 1\pmod{p}$ and
	\begin{align*}
	\sum a_{\bar{g}} &= m\left(v- \sum_{y^{v}=1}\theta(x,y)\right) \pmod{p}\\
	&= 1-m\left(\sum_{y^{v}=1}\theta(x,y)\right) \pmod{p}.
	\end{align*}
	Since $\sum a_{\bar{g}}  = 2n+1\equiv 1\pmod{p}$, we obtain \eqref{eq:xy_cond_sum}.
	
	In the special case $H=\{0\}$, i.e.\ $m=1$ and $v=2n^2+2n+1$. Noticing that $a_g\equiv 0$ or $1 \pmod{P}$, we obtain the necessary conditions \eqref{eq: xy_cond_1} and \eqref{eq: xy_cond_2}.
\end{proof}

\begin{example}
	We can use Theorem \ref{th:union_criteria} to show the following nonexistence results.
	\begin{itemize}
		\item When $n=102$, $|G|=21013$ is a prime and $h=10506$. Take $p=3$, we have $l=5253$, $f=2l$ and $\gcd(2^h-1, p^l-1)=1$ which means $\lambda=1$. As $\varphi(|G|)=2f$, $d=2$ and $\sigma_2$ acts transitively on $P_1$ and $P_2$. Thus, by Theorem \ref{th:union_criteria}, there is no $T$ satisfying Conditions \ref{condition:main_n=2_1} and \ref{condition:main_n=2_2} for $n=102$.
		\item When $n=14$, $|G|=421$ is a prime henceforth $v$ can only take the value $421$. By computation, $\lambda=3\neq 1$ or $v$. Thus we cannot use the first necessary condition to exclude the existence of $T$. However, let $p=7$ from which it follows that $l=f/2=35$. By our MAGMA program, there does not exist $x\in \F_{7^l}$ satisfying \eqref{eq:xy_cond_sum} and \eqref{eq:xy_cond_ag_bound} simultaneously. Moreover, \eqref{eq: xy_cond_1} and \eqref{eq: xy_cond_2} also do not hold.
	\end{itemize}
\end{example}

\begin{remark}
	In Theorem \ref{th:union_criteria}, we allow $p=2$. However, it is worth pointing out that when $p=2$ we cannot derive any useful information from Theorem \ref{th:union_criteria}, because in that case we are essentially using $T^2 \equiv T^{(2)} \pmod{2}$ which holds for any $T\in \Z[G]$.
\end{remark}
\begin{remark}
	If $\langle\sigma_{2}, \sigma_p,\sigma_{-1} \rangle=\Gal(\Q(\zeta_v)/\Q)$, we can still derive $\chi(\overline{T})$ for every character $\chi\in \widehat{G/H}$ from one value $\chi_1(\overline{T})$ by using \eqref{eq:T_2^ip^j} and
	\begin{equation}\label{eq:T_-2^ip^j}
	\chi(\overline{T}^{(-2^ip^j)}) \equiv \chi(\overline{T}^{(2^ip^j)})\equiv  -\alpha^{2^ip^j} \pmod{p},
	\end{equation}
	for $i=0,1,\cdots$ and $j=0,1,\cdots$, which follows from Condition \ref{condition:main_n=2_1'}.
	
	However, this case actually coincides with the one that $\langle\sigma_{2}, \sigma_p\rangle=\Gal(\Q(\zeta_v)/\Q)$, i.e.\ it is never possible that $\langle\sigma_{2}, \sigma_p \rangle\neq \langle\sigma_{2}, \sigma_p,\sigma_{-1} \rangle=\Gal(\Q(\zeta_v)/\Q)$. Assume, by way of contradiction, that this case happens. A necessary condition is that $v-1 \not\equiv 0 \pmod{4}$. As $v$ is a prime dividing $2n^2+2n+1$, the following equation
	\[ 2x^2+2x+1 =0 \pmod{v} \]
	has solutions. This implies that $-1$ must be a square in $\F_v$. Hence $v\equiv 1\pmod{4}$ which contradicts the necessary condition.
\end{remark}

By Corollary \ref{coro1} and Theorem \ref{th:union_criteria}, we have the following result.
\begin{corollary}\label{coro:union_criteria}
	Assume that at least one of the necessary conditions \ref{item:nece_1}, \ref{item:nece_2} and \ref{item:nece_3} in Theorem \ref{th:union_criteria} is not satisfied for at least one of the prime divisor $v$ of $2n^2+2n+1$. Then there are no linear perfect Lee codes of radius $2$ for dimension $n$.
\end{corollary}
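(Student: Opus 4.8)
The plan is to obtain the statement as a direct contrapositive of Theorem~\ref{th:union_criteria} combined with the translation of linear $PL(n,2)$-codes into group-ring data. First I would assume, by way of contradiction, that a linear $PL(n,2)$-code exists. By Corollary~\ref{coro1} this yields an abelian group $G$ of order $2n^2+2n+1$ and a homomorphism $\phi:\Z^n\to G$ restricting bijectively to $S(n,2)$; equivalently, by Lemma~\ref{lemma1}, a subset $T\subseteq G$, regarded as $\sum_{g\in G}a_g g\in\Z[G]$ with all $a_g\in\{0,1\}$ and $|T|=2n+1$, satisfying Conditions~\ref{condition:main_n=2_0}, \ref{condition:main_n=2_1} and \ref{condition:main_n=2_2}.

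Next I would fix the prime divisor $v$ of $2n^2+2n+1$ for which one of the necessary conditions is claimed to fail. Since $G$ is a finite abelian group with $v\mid|G|$, the structure theorem gives a subgroup $H\le G$ of index $v$, so $G/H\cong C_v$. I would then choose a prime $p\mid 2n$ for which the two standing hypotheses of Theorem~\ref{th:union_criteria} hold, i.e.\ $2n+1<m_1v$ and $2n+1<m_2v$ and $\langle\sigma_2,\sigma_p\rangle=\Gal(\Q(\zeta_v)/\Q)$; all of these depend only on $n$, $v$, $p$, so they can be verified by a finite computation. With such $H$ and $p$ in hand, $T$ meets every hypothesis of Theorem~\ref{th:union_criteria}, so its conclusions~\ref{item:nece_1} and \ref{item:nece_2} (and \ref{item:nece_3} in the case $G=H$) all hold for $v$ — contradicting the assumption. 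Hence no such $T$ exists, and by Lemma~\ref{lemma1} and Corollary~\ref{coro1} no linear $PL(n,2)$-code exists for this $n$.

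Because the deduction is essentially formal, I do not expect any conceptual obstacle; the only real content is in verifying, for the given $n$, that some admissible $p\mid 2n$ makes the standing hypotheses of Theorem~\ref{th:union_criteria} hold and that one of the necessary conditions \ref{item:nece_1}--\ref{item:nece_3} genuinely fails. This is a case-by-case computation, carried out with MAGMA as in the preceding examples (for instance $p=3$ when $n=102$, or $p=7$ when $n=14$); the substantive work has already been absorbed into Theorem~\ref{th:union_criteria} itself.
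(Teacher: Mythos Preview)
Your argument is correct and matches the paper's approach, which is literally the one line ``By Corollary~\ref{coro1} and Theorem~\ref{th:union_criteria}.'' One clarification: you do not need to \emph{find} a prime $p\mid 2n$ satisfying the standing assumptions as part of the proof --- the necessary conditions \ref{item:nece_1}--\ref{item:nece_3} of Theorem~\ref{th:union_criteria} are formulated for a given $p$, so the hypothesis of the corollary already hands you the pair $(v,p)$ together with the two bulletted assumptions, and the proof is then the pure contrapositive you describe.
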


For large $n$, it becomes more difficult to check the necessary conditions \ref{item:nece_2} and \ref{item:nece_3} in Corollary \ref{coro:union_criteria}. The main reason is that the size of the finite field $\F_{p^l}$ is increasing very fast as $n$ is getting larger. On our computer, we can use MAGMA to test these necessary conditions for $n$ up to $50$. In contrast, it is much easier to check the necessary condition \ref{item:nece_1} on $\lambda$.

In the Appendix, we list all the nonexistence results of linear $PL(n,2)$-codes for $3\leq n \leq 100$ in Table \ref{table:100}. There are totally $10$ integers $n=3$, $10$, $16$, $21$, $36$, $55$, $64$, $66$, $78$, $92$ that we cannot disprove the existence of $PL(n,2)$-codes using Theorem \ref{th:kim_generalization}, Corollaries \ref{coro:r=2_v=5,13,17} and \ref{coro:union_criteria}. However, by \cite[Theorem 6]{H09E} and \cite[Theorem 8]{HG14}, we can further exclude the case with $n=3$ and $10$, respectively. Therefore, the existence of $PL(n,2)$-codes are still open for exactly $8$ integers $n$ for $n\leq 100$.

It is worth pointing out that each of Theorem \ref{th:kim_generalization}, Corollary \ref{coro:r=2_v=5,13,17} and Corollary \ref{coro:union_criteria} offers us the nonexistence of linear $PL(n,2)$-codes which cannot be covered by the other two criteria. For example, only Theorem \ref{th:kim_generalization} works for $n=19$ and only Corollary \ref{coro:union_criteria} works for $n=20$. It is clear that Corollary \ref{coro:r=2_v=5,13,17} is a summary of Theorems \ref{th:r=2_v=5}, \ref{th:r=2_v=13} and \ref{th:r=2_v=17}. Each of these three Theorems provides us irreplaceable nonexistence results of linear $PL(n,2)$-codes. For instance, $n=8$ is only covered by Theorem \ref{th:r=2_v=5}, $n=49$ is only covered by Theorem \ref{th:r=2_v=13} and $n=299$ is only covered by Theorem \ref{th:r=2_v=17}.

In Table \ref{table:comparing}, we compare the cardinality of integers $n$ for which the nonexistence of linear $PL(n,2)$-codes are proved in this paper and those in \cite{K17} and \cite{QCC18}.
\begin{table}[h]
	\centering
	\begin{tabular}{|c|c|c|c|c|c|c|}
		\hline
		$N$ &  \cite{K17} & \cite{QCC18} & Theorem \ref{th:kim_generalization} & Corollary \ref{coro:r=2_v=5,13,17} & Corollary \ref{coro:union_criteria} \ref{item:nece_1} & All\\
		\hline
		$100$ & $34$ & $16$ &  $68$ & $38$ & $44$ & $90$ \\
		\hline
		$500$ & $129$ & $103$ & $360$ & $236$& $247$ & $462$ \\
		\hline
	\end{tabular}
	\caption{The cardinality of $n\leq N$ for which linear $PL(n,2)$-codes is proved to be not exist.}\label{table:comparing}
\end{table}

\section{Radius equals 3}\label{radius3}
In this part, we investigate the existence of perfect Lee codes with radius $r=3$. For a positive integer $n$, let $G$ be a multiplicative group of order $\frac{4n(n-1)(n-2)}{3}+6n^2+1$ with identity element denoted by $1$. By Lemma~\ref{lemma2}, we consider the existence of $T\subseteq G$ viewed as an element in $\Z[G]$ satisfying
\begin{enumerate}[label=(\roman*)]
	\item \label{condition:main_n=3_0} $1\in T$,
	\item \label{condition:main_n=3_1} $T=T^{(-1)}$,
	\item \label{condition:main_n=3_2} $T^3= 6G-3T^{(2)}T -2T^{(3)}+6nT$.
\end{enumerate}

Let $H$ be a subgroup of $G$. Suppose that $m=|H|$ and $v=|G/H|$. As in the proofs of Theorems \ref{th:r=2_v=5}, \ref{th:r=2_v=13}, \ref{th:r=2_v=17} and \ref{th:union_criteria}, we define $\overline{T}$ as the image of $T$ under the canonical homomorphism from $G$ to $G/H$. Now Conditions \ref{condition:main_n=3_1} and \ref{condition:main_n=3_2} becomes
\begin{enumerate}[label=(\roman*')]\setcounter{enumi}{1}
	\item \label{condition:main_n=3_1'} $\overline{T}=\overline{T}^{(-1)}$,
	\item \label{condition:main_n=3_2'} $\overline{T}^3= 6m G/H-3\overline{T}^{(2)}\overline{T} -2\overline{T}^{(3)}+6n\overline{T}$.
\end{enumerate}

One possible solution to Condition \ref{condition:main_n=3_2'} is that $\overline{T}=\overline{T}^{(2)}=\overline{T}^{(3)}$. In fact, if $v\mid (2n+1)$, then $\overline{T} = \frac{2n+1}{v}G/H$ satisfies Condition \ref{condition:main_n=3_2'}. In such a case, we cannot use it to prove the nonexistence of $T$ satisfying Conditions \ref{condition:main_n=3_1} and \ref{condition:main_n=3_2}. Hence, to obtain nonexistence results, we have to exclude several special parameters for $\overline{T}$.
\begin{lemma}\label{lm:SS}
	Let $S=a+bG\in \Z[G]$ with $|G|=v$. Assume that positive integers $v$ and $m$ are such that  $a+vb=2n+1$, $mv=1+6n^2+\frac{4n(n-1)(n-2)}{3}$ and $S$ satisfies
	\begin{equation}\label{eq:SS}
		S^3 = 6m G-3S^2-2S+6nS.
	\end{equation}
	Then one of the following conditions holds.
	\begin{itemize}
		\item $v \mid 2n+1$;
		\item $24n+1$ is a square $c^2\in \Z$ and $v$ divides at least one of $\frac{c^2\pm 6c+29}{12}$.
	\end{itemize}
\end{lemma}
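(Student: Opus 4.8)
The plan is to follow the template of Lemma~\ref{lm:SS_r=2}: work entirely in the group ring $\Z[G]$ with $|G|=v$, use the relations $eG=G$ and $G^2=vG$ together with the hypothesis $a+vb=2n+1$ (equivalently $vb=2n+1-a$), and then compare the coefficient of the identity element $e$ on both sides of \eqref{eq:SS}.

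First I would compute the powers of $S=ae+bG$. From $S^2=a^2e+(2ab+b^2v)G=a^2e+b(2a+vb)G$ and $2a+vb=a+2n+1$ one gets $S^2=a^2e+b(a+2n+1)G$; multiplying once more and again substituting $vb=2n+1-a$ to clear the $G^2$ term yields $S^3=a^3e+b\bigl[a^2+(2n+1)(a+2n+1)\bigr]G$. On the right-hand side of \eqref{eq:SS}, the term $6mG$ contributes nothing to the coefficient of $e$, and collecting the coefficient of $e$ from $-3S^2-2S+6nS$ gives $-3a^2-2a+6na$; the coefficient of $G$ involves $m$ and $b$.

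The key step is then the comparison of the coefficients of $e$, which reads $a^3=-3a^2+(6n-2)a$, i.e. $a\bigl(a^2+3a-6n+2\bigr)=0$. If $a=0$, then $vb=2n+1$ and hence $v\mid 2n+1$, the first alternative. Otherwise $a^2+3a+2=6n$; regarding this as a quadratic in $a$, its discriminant is $9+4(6n-2)=24n+1$, so $24n+1$ must be a perfect square $c^2\in\Z$ and $a=\frac{-3\pm c}{2}$. Finally, since $v\mid vb=2n+1-a$, substituting $n=\frac{c^2-1}{24}$ and $a=\frac{-3\pm c}{2}$ gives $2n+1-a=\frac{c^2\mp 6c+29}{12}$, so $v$ divides one of $\frac{c^2+6c+29}{12}$ and $\frac{c^2-6c+29}{12}$, which is the second alternative.

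I do not expect a genuine obstacle here: as with the remark following Lemma~\ref{lm:SS_r=2}, the comparison of the coefficients of $G$ should yield no additional constraint (it only expresses $m$ in terms of $a$ and $b$), so the entire content lies in the identity-coefficient equation. The only points requiring a little care are bookkeeping the substitution $vb=2n+1-a$ consistently while expanding $S^3$, and noting that $\frac{c^2\mp 6c+29}{12}$ is automatically an integer, being equal to $vb=2n+1-a$.
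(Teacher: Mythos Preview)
Your proposal is correct and follows essentially the same route as the paper: expand $S^2$ and $S^3$ in $\Z[G]$ using $G^2=vG$ and $vb=2n+1-a$, compare the coefficient of the identity to obtain $a(a^2+3a+2-6n)=0$, and then read off the two alternatives exactly as you do. The paper also records the $G$-coefficient equation and notes (in the remark following the lemma) that it gives no further constraint, which matches your final observation.
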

\begin{proof}
	It can be readily verified that
	\[ S^2 = ((2n+1)b+ab)G + a^2, \]
	and
	\[ S^3 = b((2n+1+a)(2n+1)+a^2)G + a^3.  \]
	By comparing the coefficients of $1$ and $G$ in \eqref{eq:SS}, we have
	\begin{align}
		\label{eq:SS_1} f_1&=a^3 + 3a^2+2a-6an=0 \\
		\label{eq:SS_2} f_2&=b((2n+1+a)(2n+1)+a^2 + 5+3a )-6m=0.
	\end{align}
	Obviously, \eqref{eq:SS_1} holds if and only if $a=0$ or there exists an integer $a$ such that $a^2+3a+2-6n=0$. It is clear that $a=0$ means $v\mid 2n+1$. The second case is equivalent to that $1+24n$ is a square in $\Z$. In such a case,
	\[bv = 2n+1-a = \frac{c^2-1}{12}+1 - \frac{-3\pm c}{2}=\frac{c^2\mp 6c+29}{12}. \qedhere\]
\end{proof}
\begin{remark}
	In fact, \eqref{eq:SS_2} in Lemma \ref{lm:SS} cannot provide us more restrictions than \eqref{eq:SS_1}.	Recall that $2n+1=a+bv$ and $3+18n^2+4n(n-1)(n-2)=3mv$. By plugging them into \eqref{eq:SS_2}, we get \eqref{eq:SS_1} multiplying a constant.
\end{remark}

One difficulty to get a result here similar to Theorem \ref{th:union_criteria} is that it is not easy to derive the value of $\chi(\overline{T}^{i})$ from $\chi(\overline{T})$. However, for small $v$, it is still possible to obtain some results as what we have done in Theorems \ref{th:r=2_v=5}, \ref{th:r=2_v=13} and \ref{th:r=2_v=17}. The first three smallest values of $v$ dividing $|G|$ for some $n$ are $v=3,5,7$.

When $v=3$, by Condition \ref{condition:main_n=3_1'}, $\overline{T}^{(2)} =\overline{T}^{(-1)}=\overline{T}$ and $\overline{T}^{(3)} = 2n+1$. Hence Condition \ref{condition:main_n=3_2'} becomes
\begin{equation}\label{eq:v=3-1}
\overline{T}^3= 6m G/H-3\overline{T}^{2}-2(2n+1)+6n\overline{T}.
\end{equation}
Let $G/H = \{1, g, g^{-1}\}$. Assume that $\overline{T}= a + b(g+g^{-1})$ for some $a,b\in \Z$. By calculation, one can see that there always exist $a$ and $b$ satisfying \eqref{eq:v=3-1}.

When $v=5$, it is direct to verify that $\overline{T}=bG/H$ for the integer $b=(2n+1)/5$ satisfying Conditions \ref{condition:main_n=3_1'} and \ref{condition:main_n=3_2'}, i.e.\ it cannot be excluded by Lemma \ref{lm:SS}.

Fortunately $v=7$ is not the trivial case anymore and we can completely prove the following nonexistence results.
\begin{theorem}\label{th:r=3_v=7}
	Assume that $n\equiv 1,5 \pmod{7}$. If $24n+1$ is not a square or $84 \nmid (24n+1)^2\pm 6\sqrt{24n+1}+29$, then there is no $\overline{T}$ of size $2n+1$ in $C_7$ satisfying Conditions \ref{condition:main_n=3_1'} and \ref{condition:main_n=3_2'}.
\end{theorem}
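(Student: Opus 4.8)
The plan is to follow the strategy of the proofs of Theorems~\ref{th:r=2_v=5} and~\ref{th:r=2_v=13}. Suppose, for a contradiction, that $\overline{T}\subseteq C_7$ of size $2n+1$ satisfies Conditions~\ref{condition:main_n=3_1'} and~\ref{condition:main_n=3_2'}. Under $n\equiv1,5\pmod7$ one checks that $7\mid|G|$ (so that $G/H\cong C_7$ is legitimate) while $2n+1\equiv3,4\pmod7$, hence $7\nmid2n+1$; this last point --- which fails precisely when $n\equiv3\pmod7$ --- is what will let us discard one degenerate case for free. Since $2$ has order $3$ modulo $7$ and $\overline{T}=\overline{T}^{(-1)}$, we have $\overline{T}^{(4)}=\overline{T}^{(3)}$, $\overline{T}^{(5)}=\overline{T}^{(2)}$, $\overline{T}^{(6)}=\overline{T}$, so there are only three distinct ``conjugates'' $X:=\overline{T}$, $Y:=\overline{T}^{(2)}$, $Z:=\overline{T}^{(3)}$, and the ring endomorphism $(\cdot)^{(2)}$ permutes them cyclically as $X\mapsto Y\mapsto Z\mapsto X$. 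Reducing modulo $G/H$ (the corresponding quotient ring of $\Z[C_7]$ is isomorphic to $\Z[\zeta_7]$ and hence has no zero divisors), Condition~\ref{condition:main_n=3_2'} together with its two images under $(\cdot)^{(2)}$ become
\begin{align*}
X^3 &\equiv -3YX-2Z+6nX,\\
Y^3 &\equiv -3ZY-2X+6nY,\\
Z^3 &\equiv -3XZ-2Y+6nZ.
\end{align*}

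Next I would eliminate $Y$ and $Z$: each equation is linear in one of the variables, so successive resultant computations (in MAGMA~\cite{Magma}, as in Theorem~\ref{th:r=2_v=13}) yield a univariate identity $h(X)\equiv0\pmod{G/H}$ with $h\in\Z[n][X]$, and since $\Z[\zeta_7]$ is an integral domain some irreducible factor of $h$ over $\Z[G/H]$ must vanish modulo $G/H$. Two factors are \emph{degenerate}. The factor $X$ itself gives $\chi(\overline{T})=0$ for all non-principal $\chi\in\widehat{G/H}$, forcing $\overline{T}=b\,G/H$ and hence $7\mid2n+1$ --- impossible here. A second degenerate factor, together with the relation $|\overline{T}|=2n+1$ and the counting argument of Theorem~\ref{th:r=2_v=5}, would force the exact identity $X=Y=Z$, i.e.\ $\overline{T}=a+b\,G/H$ with $a+7b=2n+1$; by Lemma~\ref{lm:SS} (with $v=7$) this requires $24n+1$ to be a square satisfying the divisibility condition of that lemma --- exactly what the hypotheses exclude. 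So no degenerate factor can vanish.

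It remains to rule out the \emph{non-degenerate} factors of $h$. If $g$ is one of them, then $g(\chi(\overline{T}))=0$ for a non-principal $\chi$, and by Condition~\ref{condition:main_n=3_1'} the algebraic integer $\chi(\overline{T})\in\Z[\zeta_7]$ is real, so it lies in the ring of integers of the cyclic cubic field $K=\Q(\zeta_7)^+$, whose conductor is $7$ and whose discriminant is $49$. Since $K$ has no subfield of degree $2$, the minimal polynomial of $\chi(\overline{T})$ over $\Q$ --- which divides $h$ --- is linear or a cubic with splitting field exactly $K$; the linear case is subsumed by the degenerate analysis, and in the cubic case the discriminant of that minimal polynomial must equal $49$ times a perfect square. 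I would compute this discriminant as an explicit polynomial in $n$ and check, using the hypotheses (directly, or by reducing modulo a well-chosen prime $p\nmid|G|$ and invoking Lemma~\ref{lm:ideal_decomp} as in Theorem~\ref{th:r=2_v=13}), that it can be $49$ times a square only when $24n+1$ is a square satisfying the divisibility of Lemma~\ref{lm:SS} --- again excluded. Hence no non-degenerate factor of $h$ vanishes either, and no such $\overline{T}$ exists.

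The hard part will be twofold. First, the resultant elimination produces a fairly large polynomial in $X$ with $n$ as a parameter, and one must verify that its irreducible factorization has the anticipated shape: a short explicit list of degenerate factors together with non-degenerate cubic factors. Second, and more delicate, one must dispose of the non-degenerate cubic factors \emph{uniformly in $n$} --- showing that the discriminant of such a factor is never $49$ times a perfect square for any admissible $n$, rather than merely checking finitely many cases. A minor additional point is that $7$ ramifies in $\Z[\zeta_7]$ (because $7\mid|G|$), so any auxiliary prime $p$ used in a modular reduction must be chosen away from $|G|$.
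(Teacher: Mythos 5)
Your setup matches the paper's proof exactly: the observation that $n\equiv1,5\pmod 7$ gives $7\mid|G|$ and $7\nmid 2n+1$, the reduction to the three conjugate equations in $\overline{T},\overline{T}^{(2)},\overline{T}^{(3)}$ using $\overline{T}^{(2^2)}=\overline{T}^{(3)}$, the resultant elimination in MAGMA, and the disposal of the degenerate factors. (In the paper the eliminant factors as $3\overline{T}(\overline{T}^2+3\overline{T}-6n+2)h_3$ with $\deg h_3=24$, and the first two factors are excluded precisely via Lemma~\ref{lm:SS} and a counting argument, as you anticipate.)

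The genuine gap is in your treatment of the non-degenerate factor $h_3$. Your plan is purely field-theoretic: $\chi(\overline{T})$ lies in $\Q(\zeta_7)^+$, so its minimal polynomial is linear or a cubic of discriminant $49\cdot(\text{square})$, and you hope to show no cubic factor of $h_3$ can have such a discriminant for admissible $n$. Besides the difficulty you yourself flag (you would need the $\Q[X]$-factorization of a degree-$24$ polynomial with parameter $n$, uniformly in $n$), this obstruction is almost certainly insufficient on its own: the paper's proof reduces everything modulo $p=5$ (primitive mod $7$, so $(5)$ is inert in $\Z[\zeta_7]$ and the computation lives in $\F_{5^3}$), and in three of the five residue classes of $n$ modulo $5$ the reduced polynomial $\bar h_3$ \emph{does} have roots in $\F_{5^3}$ which moreover satisfy all three conjugate equations; these surviving candidates are killed only by an extra constraint you never invoke, namely the inversion formula \eqref{eq:inversion_formula} applied to recover the coefficients $a_{\bar g}$ of $\overline{T}$ and the requirement $\sum_{\bar g}a_{\bar g}\equiv 2n+1\pmod p$. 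In other words, the contradiction does not come from $\chi(\overline{T})$ failing to lie in the right field, but from the fact that no admissible value of $\chi(\overline{T})$ is consistent with $\overline{T}$ having nonnegative integer coefficients summing to $2n+1$. Without bringing in these coefficient constraints (and a concrete finite verification, e.g.\ modulo a well-chosen prime), your argument cannot be closed.
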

\begin{proof}
Now $v=7$. It is straightforward to verify that, when $n\equiv 1,5 \pmod{7}$, $7$ divides $|G|$. This also means that $7\nmid 2n+1$. Thus the first necessary condition in Lemma \ref{lm:SS} never holds.

By Condition \ref{condition:main_n=3_1'}, $\overline{T}^{(2^2)} = \overline{T}^{(-3)} = \overline{T}^{(3)}$ and $\overline{T}^{(2^3)}=\overline{T}$. Now Condition \ref{condition:main_n=3_2'} becomes
\begin{equation}\label{eq:v=7-1}
	f_1=\overline{T}^3-6m G/H+3\overline{T}^{(2)}\overline{T}+2\overline{T}^{(2^2)}-6n\overline{T}=0.
\end{equation}
Replacing $\overline{T}$ by $\overline{T}^{(2)}$ and $\overline{T}^{(2^2)}$ in \eqref{eq:v=7-1} respectively, together with Condition \ref{condition:main_n=3_1'} we get
\begin{equation}\label{eq:v=7-2}
	f_2 = (\overline{T}^{(2)})^3 - 6m G/H + 3\overline{T}^{(2^2)}\overline{T}^{(2)} + 2\overline{T} - 6n\overline{T}^{(2)}=0,
\end{equation}
and
\begin{equation}\label{eq:v=7-3}
	f_3 = (\overline{T}^{(2^2)})^3 - 6mG/H + 3\overline{T}\overline{T}^{(2^2)}  + 2\overline{T}^{(2)} - 6n\overline{T}^{(2^2)}=0.
\end{equation}
These three equations can be regarded as polynomials with variables $\overline{T}$, $\overline{T}^{(2)}$ and $\overline{T}^{(3)}$.
By using MAGMA~\cite{Magma}, we calculate the resultant $f_{12}$ of $f_1$ and $f_2$ as well as the resultant $f_{13}$ of $f_1$ and $f_3$ with respect to $\overline{T}^{(2)}$. Then we compute the resultant $g$ of $f_{12}$ and $f_{13}$ with respect to $\overline{T}^{(2^2)}$. This means $g$ only contains variables $\overline{T}$ and $G/H$. Moreover $g$  modulo $G/H$ can be factorized into the multiplication of $3$ irreducible ones
\begin{equation}\label{eq:v=7_h}
	h\equiv 3\overline{T} (\overline{T}^2 + 3\overline{T} - 6n + 2)h_3 \pmod{G/H},
\end{equation}
where $h_3$ is of degree $24$. As $h_3$ is very complicated, we do not write them down here explicitly. Since $h$ is derived from \eqref{eq:v=7-1}, \eqref{eq:v=7-2} and \eqref{eq:v=7-3}, it must be congruent to $0$ modulo $G/H$.

Our assumptions imply that the necessary conditions in Lemma \ref{lm:SS} are not satisfied, which means that $\overline{T} (\overline{T}^2 + 3\overline{T} - 6n + 2)\neq 6m G/H$. This and a direct counting argument further imply that
\[\overline{T} (\overline{T}^2 + 3\overline{T} - 6n + 2)\not\equiv 0 \pmod{G/H}.\]
As $|G/H|=7$ is a prime, there is no zero divisors in $\Z[G/H]$. Hence, $h\equiv 0\pmod{G/H}$ implies
\[h_3\equiv 0\pmod{G/H}.\]
By the symmetric property of $\overline{T}$, $\overline{T}^{(2)}$ and $\overline{T}^{(3)}$, it also holds if we replace $\overline{T}$ in it by $\overline{T}^{(2)}$ or $\overline{T}^{(3)}$.

Let $\chi\in \widehat{G/H}$ be a non-principal character. It follows that
\[ \chi(h_3) =0. \]
As $h_3$ is too complicated, we cannot handle them directly. Instead, we choose some prime number $p$ and consider $\chi(h_3)$ modulo $p$. If $p$ is primitive modulo $v$ which means $p\equiv 3,5 \pmod{7}$, by Lemma \ref{lm:ideal_decomp}, $(p)$ is still a prime ideal in $\Z[\zeta_v]$. If we replace $\chi(\overline{T}) \pmod{p}$ by $X$ in $\chi(h_3)\equiv 0\pmod{p}$ and let its coefficients modulo $p$, then we get a polynomial $\bar{h}_3(X)$ in $\F_{p}[X]$. Let $\tau_1 \equiv \chi(\overline{T})\pmod{p}$, it is clear that  $\bar{h}_3(\tau_1)=0$.

Note that $\tau_1$ is a root of $\bar{h}_3$.
As $\overline{T}^{(-1)}=\overline{T}$,  $\chi(\overline{T}^{(-1)})\equiv \chi(\overline{T}) \pmod{p}$ whence $\tau_1$ is in $\F_{p^{(v-1)/2}} = \F_{p^3}$. Recall that $\chi(\overline{T}^{(p)})\equiv \chi(\overline{T})^p \pmod{p}$. Hence
\begin{equation}\label{eq:tau_2}
	\chi(\overline{T}^{(2)})
	\equiv \tau_2= \left\{
	\begin{array}{ll}
	\tau_1^p \pmod{p},     & p\equiv 5 \pmod{7}, \\
	\tau_1^{p^2} \pmod{p}, & p\equiv 3 \pmod{7}.
	\end{array}
	\right.
\end{equation}
and
\begin{equation}\label{eq:tau_3}
\chi(\overline{T}^{(3)})
\equiv \tau_3=\left\{
\begin{array}{ll}
\tau_1^p \pmod{p},    & p\equiv 3 \pmod{7}, \\
\tau_1^{p^2} \pmod{p}, & p\equiv 5 \pmod{7}.
\end{array}
\right.
\end{equation}

Let us consider the necessary conditions that $\tau_1$ must satisfy. First, by \eqref{eq:v=7-1}, \eqref{eq:v=7-2} and \eqref{eq:v=7-3},
\begin{align}
	\label{eq:v=7_tau_1} \tau_1^3+3\tau_2\tau_1+2\tau_3-6n\tau_1&=0 \\
	\label{eq:v=7_tau_2} \tau_2^3+3\tau_3\tau_2+2\tau_1-6n\tau_2&=0 \\
	\label{eq:v=7_tau_3} \tau_3^3+3\tau_1\tau_3+2\tau_2-6n\tau_3&=0.
\end{align}

Second, as in Theorem \ref{th:union_criteria}, we look at the coefficients of $a_{\bar{g}}$ by using the inversion formula \eqref{eq:inversion_formula}. Let $\beta$ be an element of order $7$ in $\F_{p^6}$. For $\bar{g} \in G/H$ with $\chi(\bar{g}) \equiv \beta \pmod{p}$,
\begin{align}
	\nonumber	a_{\bar{g}} &= \frac{1}{7} \left((2n+1)  + \sum_{i=1}^{6}\chi(\overline{T}^{(i)})\chi(\bar{g}^{-i}) \right)\\
	\nonumber &=\frac{1}{7} \left((2n+1)  + \sum_{i=1}^{3} \chi(\overline{T}^{(i)})(\chi(\bar{g}^{i}) + \chi(\bar{g}^{-i})) \right)\\
	\label{eq:v=7_ag=...}&\equiv
	\dfrac{1}{7} \left( 2n+1 + \tau(\beta+\beta^{-1}) + \tau_2(\beta^2+\beta^{-2}) + \tau_3(\beta^3+\beta^{-3}) \right) \pmod{p}.
\end{align}
As $|\overline{T}| = |T|=2n+1$, every $a_{\bar{g}}$ from \eqref{eq:v=7_ag=...} should also satisfy that
\begin{equation}\label{eq:v=7_ag_sum}
	\sum_{\bar{g}\in G/H} a_{\bar{g}} \equiv 2n+1\pmod{p}.
\end{equation}

Let us take $p=5$. In MAGMA, the defining polynomial for $\F_{5^3}$ is $X^3+3X+3\in \F_{5}[X]$. Let $\xi$ denote a root of it. Depending on the value of $n$ modulo $p$, we divide our proof into the following cases.

\textbf{Case \RN{1}:} Assume that $n\equiv 4\pmod{5}$. By using MAGMA, we can show that $\bar{h}_3$ has no root in $\F_{p^{3}}$, which contradicts $h_3\equiv 0 \pmod{G/H}$.

\textbf{Case \RN{2}:} Assume that $n\equiv 2\pmod{5}$. By using MAGMA, it can be checked that $\bar{h}_3$ only has roots $\tau_1= 3,4$. By \eqref{eq:tau_2} and \eqref{eq:tau_3}, $\tau_2=\tau_3=\tau_1$ which implies that \eqref{eq:v=7_tau_1}, \eqref{eq:v=7_tau_2} and \eqref{eq:v=7_tau_3} are the same equation. Plugging $\tau_1=3$ or $4$ into it, it is not equal to $0$ which is a contradiction.

\textbf{Case \RN{3}:} Assume that $n\equiv 0\pmod{5}$. By MAGMA, the roots of $\bar{h}_3$ are  $\tau_1=\xi^{42}, \xi^{58}, \xi^{86}$. It is straightforward to check that \eqref{eq:v=7_tau_1}, \eqref{eq:v=7_tau_2} and \eqref{eq:v=7_tau_3} hold. However, the left-hand-side of \eqref{eq:v=7_ag_sum} always equals $4$ which is different from $2n+1\equiv 1 \pmod{p}$.

\textbf{Case \RN{4}:} Assume that $n\equiv 3\pmod{5}$. By MAGMA, all the roots of $\bar{h}_3$ are
\[ \xi^{18}, \xi^{21}, \xi^{29}, \xi^{78}, \xi^{90}, \xi^{105}. \]
For $\tau_1=\xi^{18}, \xi^{78}, \xi^{90}$, it is straightforward to check that \eqref{eq:v=7_tau_1}, \eqref{eq:v=7_tau_2} and \eqref{eq:v=7_tau_3} cannot hold simultaneously. For $\tau_1=\xi^{21}, \xi^{29}, \xi^{105}$, the left-hand-side of \eqref{eq:v=7_ag_sum} always equals $3\not\equiv 2n+1\equiv 2 \pmod{p}$.

\textbf{Case \RN{5}:} Assume that $n\equiv 1\pmod{5}$. By MAGMA, $$\tau_1=1, \xi^{22}, \xi^{38}, \xi^{54}, 4, \xi^{66}, \xi^{73}, \xi^{82}, \xi^{89}, \xi^{110}, \xi^{117}.$$
It can be directly verified that $\sum a_{\bar{g}}\not\equiv 2n+1\equiv 3\pmod{p}$ for each of them.

To summary, under the two assumptions that
\begin{itemize}
	\item $7\mid v$;
	\item the necessary condition in Lemma \ref{lm:SS} does not hold;
\end{itemize}
we have proved that \eqref{eq:v=7-1}, \eqref{eq:v=7-2} and \eqref{eq:v=7-3} cannot hold simultaneously. Therefore, there is no $\overline{T}$ satisfying Conditions \ref{condition:main_n=3_1'} and \ref{condition:main_n=3_2'},
\end{proof}
\begin{remark}
	When $n\equiv 5\pmod{7}$, $24n+1$ can never be a square. It follows that Theorem \ref{th:r=3_v=7} provides us a nonexistence results of $T$ satisfying Conditions \ref{condition:main_n=3_1} and \ref{condition:main_n=3_2} for infinitely many $n$.
\end{remark}

By Corollary \ref{coro1} and Theorem \ref{th:r=3_v=7}, we have the following result.
\begin{corollary}\label{coro:r=3_v=7}
	Assume that $n\equiv 1,5 \pmod{7}$. If $24n+1$ is not a square or $84 \nmid (24n+1)^2\pm 6\sqrt{24n+1}+29$, then there are no linear perfect Lee codes of radius $3$ for dimension $n$.
\end{corollary}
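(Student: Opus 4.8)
The plan is to reduce the statement to the group-ring problem that Theorem~\ref{th:r=3_v=7} already resolves, so the argument is short. First I would invoke Lemma~\ref{lemma2} (which itself rests on Corollary~\ref{coro1}): a linear $PL(n,3)$-code exists if and only if there are a finite abelian group $G$ of order $1+6n^2+\tfrac{4n(n-1)(n-2)}{3}$ and a subset $T\subseteq G$, regarded as an element of $\Z[G]$, satisfying Conditions~\ref{condition:main_n=3_0}, \ref{condition:main_n=3_1} and~\ref{condition:main_n=3_2}; in particular $|T|=2n+1$. Hence it suffices to show that, under the hypotheses of the corollary, no such $T$ can exist.

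Next I would pass to a cyclic quotient of order $7$. The assumption $n\equiv 1,5\pmod 7$ forces $7\mid|G|$ --- this is the elementary congruence $1+6n^2+\tfrac{4n(n-1)(n-2)}{3}\equiv 0\pmod 7$ already recorded in the proof of Theorem~\ref{th:r=3_v=7} --- so, $G$ being a finite abelian group, it possesses a subgroup $H$ of index $7$ with $G/H\cong C_7$. Writing $\rho\colon G\to G/H$ for the canonical projection and $\overline T=\rho(T)\in\Z[C_7]$, the ring homomorphism $\rho$ sends $G$ to $|H|\cdot(G/H)$ and commutes with the operators $(\cdot)^{(j)}$, so applying $\rho$ to Conditions~\ref{condition:main_n=3_1} and~\ref{condition:main_n=3_2} yields exactly Conditions~\ref{condition:main_n=3_1'} and~\ref{condition:main_n=3_2'} for $\overline T$, while the total multiplicity of $\overline T$ equals $|T|=2n+1$.

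Finally I would quote Theorem~\ref{th:r=3_v=7}: since $n\equiv 1,5\pmod 7$ and either $24n+1$ is not a square or $84\nmid(24n+1)^2\pm 6\sqrt{24n+1}+29$, there is no element $\overline T$ of size $2n+1$ in $C_7$ satisfying Conditions~\ref{condition:main_n=3_1'} and~\ref{condition:main_n=3_2'}. This contradicts the construction of $\overline T$ above, so no linear $PL(n,3)$-code exists for these $n$.

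There is essentially no obstacle left at this stage: all the real work --- the resultant elimination that produces the degree-$24$ factor $h_3$, the reduction modulo the prime ideal over $p=5$, and the case split on $n\bmod 5$ --- has been carried out inside Theorem~\ref{th:r=3_v=7}. The only points requiring any care are the verification that $7\mid|G|$ (a one-line check modulo $7$) and the descent from an arbitrary $G$ to the quotient $C_7$, which is the same projection device used throughout Section~\ref{radius3}, together with the standard fact that a finite abelian group of order divisible by $7$ has a subgroup of index $7$.
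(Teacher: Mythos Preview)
Your proposal is correct and follows the same route as the paper: the paper's own proof is the single line ``By Corollary~\ref{coro1} and Theorem~\ref{th:r=3_v=7}'', and you have simply unpacked this by passing through Lemma~\ref{lemma2}, projecting to the quotient $C_7$, and invoking Theorem~\ref{th:r=3_v=7} to obtain the contradiction. There is no substantive difference in approach.
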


In Table \ref{table:r=3_v=7}, we list the cardinality of $n$ for which the existence of linear $PL(n,3)$-codes can be excluded by Corollary \ref{coro:r=3_v=7}. Compared with Table 1 in \cite{ZG17}, here we can exclude the existence of linear $PL(n,3)$-codes for more $n$.
\begin{table}[h!]
	\centering
	\begin{tabular}{|c|c|c|c|c|c|}
		\hline
		$N$ 		& $10$ & $10^2$ & $10^3$ & $10^4$ & $10^5$ \\
		\hline
		$\# \{n\leq N: \text{Corollary \ref{coro:r=3_v=7} can be applied}\}$  & $1$ & $20$ & $256$ & $2763$ & $28267$ \\
		\hline
	\end{tabular}
	\caption{The number of $n$ to which Corollary \ref{coro:r=3_v=7} can be applied.}\label{table:r=3_v=7}
\end{table}

It appears that our approach can be further applied to analyze the existence of linear $PL(n,3)$-codes with $v=11,13,17\dots$.
First, we have to exclude the possibility that $v\mid 2n+1$, otherwise by Lemma \ref{lm:SS} we can take $\overline{T}=\frac{2n+1}{v} G/H$. It is routine to verify that there exists $v$ such that $v\mid |G|$ and $v\nmid 2n+1$ if and only if $v\equiv 7\pmod{8}$. Hence, the next interesting case is $v=23$. However, it is already beyond our computer capability to eliminate variables to obtain a univariate polynomial analogous to \eqref{eq:v=7_h}.

\section{Concluding remarks}\label{sec:conclusion}
In this paper, we have proved several nonexistence results concerning linear $PL(n,r)$-codes for $r=2,3$ via the group ring approach. Theorem \ref{th:kim_generalization}, as a generalization of Kim's result in \cite{K17}, together with Corollaries \ref{coro:r=2_v=5,13,17} and \ref{coro:union_criteria} provide us strong criteria to exclude the existence of linear $PL(n,2)$-codes for infinitely many values of $n$. In particular, together with some known results for $3\le n\le 12$ in \cite{H09E} and \cite{HG14}, we show that there is no linear $PL(n,2)$-codes in $\Z^n$ for all $3\le n\le 100$ except 8 cases which is summarized in Table \ref{table:100}. In Section \ref{radius3}, we follow the same approach to prove the nonexistence of linear $PL(n,3)$-codes for infinitely many values of $n$.

It appears that our approach can be further applied on the nonexistence problem of $PL(n,r)$ for $r>3$. However, for $r=4,5,\cdots$, the group ring equations become more involved and contain much more terms. For instance, when $r=4$, according to our computation, the corresponding group ring condition becomes
\[ T^4 = 24G -12n(T^2 +T^{(2)}) - 6T^{(2)} T^2 - 3 T^{(2)}T^{(2)} - 8 T^{(3)}T - 6T^{(4)} + 12n(n-1).\]
It seems almost infeasible to consider its image under $\rho: G\rightarrow G/H$ to derive a univariable polynomial in $\overline{T}$ which is analogous to \eqref{eq:v=7_h}.

\section*{Acknowledgment}
The authors express their gratitude to the anonymous reviewers for their detailed and constructive comments which are very helpful to the improvement of the presentation of this paper. Tao Zhang is partially supported by the National Natural Science Foundation of China (No.\ 11801109). Yue Zhou is partially supported by the National Natural Science Foundation of China (No.\ 11771451).

\section*{Appendix}
\begin{longtable}{|c|l|c|l|}
	\caption{Nonexistence of linear $PL(n,2)$-codes for dimension $n$, where $3\leq n\leq 100$.}\label{table:100}\\
	\hline \multicolumn{1}{|c|}{$n$} & \multicolumn{1}{c|}{\textbf{nonexistence}} & \multicolumn{1}{c|}{$n$} & \multicolumn{1}{c|}{\textbf{nonexistence}} \\ \hline
	\endfirsthead
	
	\multicolumn{4}{c}%
	{{\bfseries \tablename\ \thetable{} -- continued from previous page}} \\
	\hline \multicolumn{1}{|c|}{$n$} & \multicolumn{1}{c|}{\textbf{nonexistence}} & \multicolumn{1}{c|}{$n$} & \multicolumn{1}{c|}{\textbf{nonexistence}} \\ \hline
	\endhead
	
	\hline \multicolumn{4}{|r|}{{Continued on next page}} \\ \hline
	\endfoot
	
	\hline \hline
	\endlastfoot
	
	3 &\cite[Theorem 6]{H09E}&
	4 &Theorem \ref{th:kim_generalization} \\ \hline
	5 &Theorem \ref{th:kim_generalization}, Corollary \ref{coro:union_criteria}&
	6 &Theorem \ref{th:kim_generalization}, Corollary \ref{coro:union_criteria}\\ \hline
	7 &Theorem \ref{th:kim_generalization} &
	8 &Corollary \ref{coro:r=2_v=5,13,17}\\ \hline
	9 &Theorem \ref{th:kim_generalization}, Corollary \ref{coro:union_criteria}&
	10 & \cite[Theorem 8]{HG14}\\ \hline
	11 &Theorem \ref{th:kim_generalization}, Corollaries \ref{coro:r=2_v=5,13,17}, \ref{coro:union_criteria}&
	12 &Theorem \ref{th:kim_generalization} \\ \hline
	13 &Theorem \ref{th:kim_generalization}, Corollaries \ref{coro:r=2_v=5,13,17}, \ref{coro:union_criteria}&
	14 &Theorem \ref{th:kim_generalization} \\ \hline
	15 &Corollary \ref{coro:union_criteria}&
	16 &?\\ \hline
	17 &Theorem \ref{th:kim_generalization}, Corollary \ref{coro:union_criteria}&
	18 &Corollaries \ref{coro:r=2_v=5,13,17}, \ref{coro:union_criteria}\\ \hline
	19 &Theorem \ref{th:kim_generalization} &
	20 &Corollary \ref{coro:union_criteria}\\ \hline
	21 &?&
	22 &Theorem \ref{th:kim_generalization} \\ \hline
	23 &Corollaries \ref{coro:r=2_v=5,13,17}, \ref{coro:union_criteria}&
	24 &Theorem \ref{th:kim_generalization} \\ \hline
	25 &Theorem \ref{th:kim_generalization}, Corollary \ref{coro:union_criteria}&
	26 &Theorem \ref{th:kim_generalization}, Corollaries \ref{coro:r=2_v=5,13,17}, \ref{coro:union_criteria}\\ \hline
	27 &Theorem \ref{th:kim_generalization}, Corollaries \ref{coro:r=2_v=5,13,17}, \ref{coro:union_criteria}&
	28 &Corollary \ref{coro:union_criteria}\\ \hline
	29 &Theorem \ref{th:kim_generalization}, Corollary \ref{coro:union_criteria}&
	30 &Theorem \ref{th:kim_generalization}, Corollary \ref{coro:union_criteria}\\ \hline
	31 &Theorem \ref{th:kim_generalization} &
	32 &Theorem \ref{th:kim_generalization} \\ \hline
	33 &Theorem \ref{th:kim_generalization}, Corollary \ref{coro:r=2_v=5,13,17}&
	34 &Theorem \ref{th:kim_generalization} \\ \hline
	35 &Theorem \ref{th:kim_generalization} &
	36 &?\\ \hline
	37 &Theorem \ref{th:kim_generalization}, Corollary \ref{coro:union_criteria}&
	38 &Corollary \ref{coro:r=2_v=5,13,17}\\ \hline
	39 &Theorem \ref{th:kim_generalization} &
	40 &Theorem \ref{th:kim_generalization}, Corollary \ref{coro:r=2_v=5,13,17}\\ \hline
	41 &Corollary \ref{coro:r=2_v=5,13,17}&
	42 &Theorem \ref{th:kim_generalization}, Corollary \ref{coro:union_criteria}\\ \hline
	43 &Theorem \ref{th:kim_generalization}, Corollaries \ref{coro:r=2_v=5,13,17}, \ref{coro:union_criteria}&
	44 &Theorem \ref{th:kim_generalization}, Corollaries \ref{coro:r=2_v=5,13,17}, \ref{coro:union_criteria}\\ \hline
	45 &Corollary \ref{coro:union_criteria}&
	46 &Corollaries \ref{coro:r=2_v=5,13,17}, \ref{coro:union_criteria}\\ \hline
	47 &Theorem \ref{th:kim_generalization} &
	48 &Theorem \ref{th:kim_generalization}, Corollaries \ref{coro:r=2_v=5,13,17}, \ref{coro:union_criteria}\\ \hline
	49 &Corollary \ref{coro:r=2_v=5,13,17}&
	50 &Theorem \ref{th:kim_generalization} \\ \hline
	51 &Theorem \ref{th:kim_generalization}, Corollary \ref{coro:union_criteria}&
	52 &Theorem \ref{th:kim_generalization}, Corollary \ref{coro:union_criteria}\\ \hline
	53 &Theorem \ref{th:kim_generalization}, Corollary \ref{coro:r=2_v=5,13,17}&
	54 &Theorem \ref{th:kim_generalization}, Corollary \ref{coro:r=2_v=5,13,17}\\ \hline
	55 &?&
	56 &Theorem \ref{th:kim_generalization}, Corollaries \ref{coro:r=2_v=5,13,17}, \ref{coro:union_criteria}\\ \hline
	57 &Theorem \ref{th:kim_generalization}, Corollaries \ref{coro:r=2_v=5,13,17}, \ref{coro:union_criteria}&
	58 &Corollary \ref{coro:r=2_v=5,13,17}\\ \hline
	59 &Corollary \ref{coro:union_criteria}&
	60 &Theorem \ref{th:kim_generalization} \\ \hline
	61 &Corollaries \ref{coro:r=2_v=5,13,17}, \ref{coro:union_criteria}&
	62 &Theorem \ref{th:kim_generalization}, Corollaries \ref{coro:r=2_v=5,13,17}, \ref{coro:union_criteria}\\ \hline
	63 &Theorem \ref{th:kim_generalization}, Corollaries \ref{coro:r=2_v=5,13,17}, \ref{coro:union_criteria}&
	64 &?\\ \hline
	65 &Theorem \ref{th:kim_generalization}, Corollary \ref{coro:union_criteria}&
	66 &?\\ \hline
	67 &Theorem \ref{th:kim_generalization}, Corollaries \ref{coro:r=2_v=5,13,17}, \ref{coro:union_criteria}&
	68 &Theorem \ref{th:kim_generalization}, Corollary \ref{coro:r=2_v=5,13,17}\\ \hline
	69 &Theorem \ref{th:kim_generalization} &
	70 &Theorem \ref{th:kim_generalization} \\ \hline
	71 &Theorem \ref{th:kim_generalization}, Corollary \ref{coro:r=2_v=5,13,17}&
	72 &Theorem \ref{th:kim_generalization} \\ \hline
	73 &Theorem \ref{th:kim_generalization}, Corollaries \ref{coro:r=2_v=5,13,17}, \ref{coro:union_criteria}&
	74 &Theorem \ref{th:kim_generalization}, Corollaries \ref{coro:r=2_v=5,13,17}, \ref{coro:union_criteria}\\ \hline
	75 &Theorem \ref{th:kim_generalization}, Corollaries \ref{coro:r=2_v=5,13,17}, \ref{coro:union_criteria}&
	76 &Theorem \ref{th:kim_generalization} \\ \hline
	77 &Corollary \ref{coro:union_criteria}&
	78 &?\\ \hline
	79 &Theorem \ref{th:kim_generalization} &
	80 &Theorem \ref{th:kim_generalization} \\ \hline
	81 &Theorem \ref{th:kim_generalization}, Corollaries \ref{coro:r=2_v=5,13,17}, \ref{coro:union_criteria}&
	82 &Theorem \ref{th:kim_generalization} \\ \hline
	83 &Theorem \ref{th:kim_generalization}, Corollary \ref{coro:r=2_v=5,13,17}&
	84 &Theorem \ref{th:kim_generalization} \\ \hline
	85 &Theorem \ref{th:kim_generalization}, Corollary \ref{coro:union_criteria}&
	86 &Corollary \ref{coro:r=2_v=5,13,17}\\ \hline
	87 &Theorem \ref{th:kim_generalization} &
	88 &Theorem \ref{th:kim_generalization}, Corollary \ref{coro:r=2_v=5,13,17}\\ \hline
	89 &Theorem \ref{th:kim_generalization} &
	90 &Theorem \ref{th:kim_generalization}, Corollary \ref{coro:union_criteria}\\ \hline
	91 &Corollary \ref{coro:union_criteria}&
	92 &?\\ \hline
	93 &Theorem \ref{th:kim_generalization}, Corollaries \ref{coro:r=2_v=5,13,17}, \ref{coro:union_criteria}&
	94 &Theorem \ref{th:kim_generalization}, Corollary \ref{coro:union_criteria}\\ \hline
	95 &Corollaries \ref{coro:r=2_v=5,13,17}, \ref{coro:union_criteria}&
	96 &Corollary \ref{coro:r=2_v=5,13,17}\\ \hline
	97 &Theorem \ref{th:kim_generalization} &
	98 &Corollary \ref{coro:r=2_v=5,13,17}\\ \hline
	99 &Theorem \ref{th:kim_generalization} &
	100 &Theorem \ref{th:kim_generalization} \\ \hline
\end{longtable}

\end{document}